\numberwithin{equation}{section}
\theoremstyle{plain}
\newtheorem{theorem}[equation]{Theorem}
\newtheorem{lemma}[equation]{Lemma}
\newtheorem{proposition}[equation]{Proposition}
\theoremstyle{definition}
\newtheorem{definition}[equation]{Definition}
\newtheorem{example}[equation]{Example}
\newtheorem*{openproblem}{Open Problem}
\numberwithin{equation}{section}
\newcommand{\R}{{\mathbb R}}
\newcommand{\N}{{\mathbb N}}
\newcommand{\Om}{\Omega}
\providecommand{\vint}[1]{\mathchoice
          {\mathop{\vrule width 5pt height 3 pt depth -2.5pt
                  \kern -9pt \kern 1pt\intop}\nolimits_{\kern -5pt{#1}}}
          {\mathop{\vrule width 5pt height 3 pt depth -2.6pt
                  \kern -6pt \intop}\nolimits_{\kern -3pt{#1}}}
          {\mathop{\vrule width 5pt height 3 pt depth -2.6pt
                  \kern -6pt \intop}\nolimits_{\kern -3pt{#1}}}
          {\mathop{\vrule width 5pt height 3 pt depth -2.6pt
                  \kern -6pt \intop}\nolimits_{\kern -3pt{#1}}}}
\newcommand{\eps}{\varepsilon}
\newcommand{\loc}{\mathrm{loc}}
\newcommand{\BV}{\mathrm{BV}}
\newcommand{\SBV}{\mathrm{SBV}}
\newcommand{\ch}{\text{\raise 1.3pt \hbox{$\chi$}\kern-0.2pt}}
\newcommand{\mres}{\!\mathbin{\vrule height 1.6ex depth 0pt width
		0.13ex\vrule height 0.13ex depth 0pt width 1.1ex}\!}
\DeclareMathOperator{\capa}{Cap}
\DeclareMathOperator{\dive}{div}
\DeclareMathOperator{\dist}{dist}
\DeclareMathOperator{\Var}{Var}
\begin{document}
\title{On the regularity of the maximal function\\
of a BV function
\footnote{{\bf 2020 Mathematics Subject Classification}: 42B25, 26B30.
\hfill \break {\it Keywords\,}: function of bounded variation, Sobolev function,
non-centered maximal function, quasicontinuity, absolute continuity 
}}
\author{Panu Lahti}
%\footnote{
%}
\maketitle

\begin{abstract}
	We show that the non-centered maximal function of a BV function is quasicontinuous.
	We also show that \emph{if} the non-centered maximal functions of an SBV function
	is a BV function, then it is in fact a Sobolev function. Using a recent result 
	of Weigt \cite{Wei}, we are in particular able to show that the non-centered maximal
	function of a set of finite perimeter is a Sobolev function.	
\end{abstract}

\section{Introduction}

An open problem that has attracted significant attention in the past two decades is the so-called
$W^{1,1}$-problem: is the Hardy-Littlewood
maximal function of a Sobolev function $u\in W^{1,1}(\R^d)$ also
(locally) in the $W^{1,1}$-class? Typically also a bound
$\Vert \nabla Mu\Vert_{L^1(\R^d)}\le C\Vert \nabla u\Vert_{L^1(\R^d)}$
is expected to hold.
In the case $1<p<\infty$ the analogous result is known to hold,
as first shown by Kinnunen \cite{Ki}.
The same is true for the non-centered maximal function,
defined by
\[
M u(x):=\sup_{x\in B(z,r)}\,\,\vint{B(z,r)}|u|\,dy,\quad x\in\R^d.
\]
We will work with this non-centered version that tends to have better regularity
than the ordinary Hardy-Littlewood
maximal function, in which only balls centered at $x$ are considered.
Tanaka \cite{Tan} gave a positive answer to the $W^{1,1}$-problem in the case $d=1$.
Generalizing this to higher dimensions has received
significant attention, but results have been achieved only in very special cases.
Luiro \cite{Lui} gave a positive answer to the problem in the case of
radial functions, whereas Aldaz
and P\'erez L\'azaro \cite{APL09} did the same for \emph{block-decreasing} functions.

In fact, in \cite{APL09} the authors considered functions $u\in\BV(\R^d)$ rather than just
$u\in W^{1,1}(\R^d)$.
And apart from the Sobolev regularity, one can consider other
continuity properties of the maximal function, but these are also not well
understood; see \cite{APL07} for some positive results when $d=1$ as well as
counterexamples, and \cite{APL09}
for continuity results for block-decreasing BV functions in general dimensions.

In the current paper we show that in general dimensions and for general $u\in\BV(\R^d)$,
a very natural continuity property, namely quasicontinuity, can be proven for the maximal function.
All definitions will be given in Section \ref{sec:notation}.
Most of the time we will consider the maximal function $M_{\Om}u$ where
one considers balls contained in an open set $\Om\subset \R^d$.
After proving some preliminary results in Section \ref{sec:preliminary results},
we prove the following
quasicontinuity result in Section \ref{sec:quasicontinuity}.

\begin{theorem}\label{thm:quasicontinuity of noncentered maximal function intro}
Let $u\in\BV(\Om)$. Then $M_{\Om} u$ is $1$-quasicontinuous.
\end{theorem}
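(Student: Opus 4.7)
The plan is to approximate $u$ by smooth functions whose non-centered maximal functions are continuous, and then transfer this continuity to $M_\Om u$ via a capacitary weak-type estimate established in the preliminary section. Concretely, I would choose $u_k \in C^\infty(\Om)$ with $u_k \to u$ in $L^1(\Om)$ and $|Du_k|(\Om) \to |Du|(\Om)$ (standard strict BV approximation by mollification, adapted near $\partial\Om$). A preliminary lemma should ensure that each $M_\Om u_k$ is continuous on $\Om$: intuitively, for regular $v$ the mean of $|v|$ over a ball depends continuously on the ball, and a small perturbation of the base point $x$ only requires a small perturbation of the maximizing ball, so the envelope $M_\Om u_k$ inherits continuity.

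The key step is then to control the discrepancy $M_\Om u - M_\Om u_k$ in $1$-capacity. Using sublinearity $|M_\Om u - M_\Om u_k| \le M_\Om(u - u_k)$, this reduces to bounding $\rcapa_1(\{M_\Om(u - u_k) > \lambda\})$. I expect the preliminary section to establish a capacity weak-type inequality for $M_\Om$ acting on BV data, roughly of the form
$$
\rcapa_1\bigl(\{M_\Om v > \lambda\}\bigr) \;\le\; \Phi\bigl(\lambda,\|v\|_{L^1(\Om)},|Dv|(\Om)\bigr),
$$
with $\Phi \to 0$ as $(\|v\|_{L^1}, |Dv|(\Om)) \to 0$. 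Such a bound is natural from a Vitali covering of the (open) level set by balls $B(z_i, r_i)$ with $\vint{B(z_i,r_i)}|v|\,dy > \lambda$, combined with $\rcapa_1(B(z_i, r_i)) \le C r_i^{d-1}$ and a relative isoperimetric inequality to control $\sum r_i^{d-1}$. A standard Borel--Cantelli or Egoroff-type extraction then yields, for any $\eps > 0$, an open set $U \subset \Om$ with $\rcapa_1(U) < \eps$ such that $M_\Om u_k \to M_\Om u$ uniformly on $\Om \setminus U$; as the uniform limit of continuous functions, $M_\Om u|_{\Om \setminus U}$ is continuous, which is exactly the desired $1$-quasicontinuity.

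The main obstacle is that strict BV convergence of $u_k$ to $u$ does \emph{not} imply $|D(u - u_k)|(\Om) \to 0$, so a naive application of the capacity estimate to $v = u - u_k$ need not decay as $k \to \infty$. Two natural remedies suggest themselves: (i) first truncate $u$ at height $\lambda$, observing that $\{|u|>\lambda\}$ has $1$-capacity at most $C\|u\|_{\BV(\Om)}/\lambda$, and then approximate the bounded truncation by mollification --- on $L^\infty$ data the mollifications do converge in BV seminorm; or (ii) decompose $u$ via the coarea formula into superlevel-set indicators $\chi_{\{u>t\}}$, prove quasicontinuity of $M_\Om \chi_{\{u>t\}}$ for $\cH^{d-1}$-a.e.\ $t$ via a perimeter-based capacity bound, and integrate in $t$. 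Matching one of these routes to the actual lemmas of Section~\ref{sec:preliminary results} is the principal technical hurdle.
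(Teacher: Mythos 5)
There is a genuine gap, and it is the one you flagged yourself: the approximation scheme cannot be repaired for general $u\in\BV(\Om)$. The capacitary weak-type estimate available (Proposition \ref{prop:weak type estimate}) controls $\capa_1(\{Mv>\lambda\})$ by $|Dv|(\R^d)/\lambda$ alone, so to make $M_\Om(u-u_k)$ small in capacity you would need $|D(u-u_k)|(\Om)\to 0$. But if $u_k\to u$ in $L^1$ and $|D(u_k-u)|(\Om)\to 0$ with $u_k$ smooth, then $Du$ is a total-variation-norm limit of absolutely continuous measures and hence itself absolutely continuous, i.e.\ $u\in W^{1,1}_{\loc}(\Om)$. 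So the method is structurally confined to the Sobolev case, which is precisely the case the theorem is meant to go beyond. Neither remedy closes this. Remedy (i) rests on a false claim: mollifications of a \emph{bounded} BV function do not converge in the BV seminorm either --- for $u=\mathbbm{1}_{(0,1)}$ on $\R$ one has $|D(u-u_\eps)|(\R)\to 2|Du|(\R)$, since $Du$ is purely singular while $Du_\eps$ is absolutely continuous; truncation at height $\lambda$ removes none of the jump or Cantor part. Remedy (ii) founders on the nonlinearity of $M_\Om$: the layer-cake identity gives only $M_\Om u\le\int_0^\infty M_\Om\mathbbm{1}_{\{|u|>t\}}\,dt$ (the supremum over balls does not commute with the $t$-integral), so quasicontinuity of the individual $M_\Om\mathbbm{1}_{\{u>t\}}$ does not transfer to $M_\Om u$; and in any case proving quasicontinuity of $M_\Om\mathbbm{1}_E$ for a set of finite perimeter runs into the same approximation obstruction. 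A secondary soft spot: continuity of $M_\Om u_k$ for smooth $u_k$ in a general open set $\Om$ is not as automatic as you suggest (the paper explicitly warns that the Lipschitz bound for $M_{\Om,R}$ can fail in domains), though this is fixable.

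The paper's proof avoids approximating $u$ altogether. It works with the precise representative $u^\vee$, which by Theorem \ref{thm:quasisemicontinuity} is upper semicontinuous outside an open set $G$ of small capacity, and uses two facts: $M_\Om u\ge u^\vee$ outside an $\mathcal H^{d-1}$-null set (Proposition \ref{prop:maximal function and upper representative}), and a uniform-smallness lemma for the averages $\frac{1}{\mathcal L^d(B(x,r))}\int_{B(x,r)\cap G}|u|\,dy$ outside a slightly larger open set $U$ (Lemma \ref{lem:uniform convergence of u}). Since $M_\Om u$ is automatically lower semicontinuous, it then suffices to prove upper semicontinuity on $\Om\setminus U$, which is done by choosing almost-optimal balls $B(x_j^*,r_j)$ for a sequence $x_j\to x$ and splitting into the cases $r_j\to 0$ (where the upper semicontinuity of $u^\vee$ off $G$ and the smallness of the contribution of $G$ carry the estimate), $r_j\to r>0$ (where the balls converge and Lemma \ref{lem:point in closure of ball} applies), and $r_j\to\infty$ in the global case. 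If you want to salvage your outline, this direct route --- replacing ``approximate the function'' by ``quasi-uniformize its semicontinuity'' --- is the step you are missing.
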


Utilizing this result, in Section \ref{sec:continuity and Lusin} we study
continuity properties of $M_{\Om}u$ on lines parallel to coordinate axes.
Then in Section \ref{sec:Sobolev property} we examine absolute continuity on lines
and membership in the Sobolev class of the maximal function,
proving the following theorem. We say that a BV function is
a special function of bounded variation, or SBV function, if the variation measure has no Cantor part.

\begin{theorem}\label{thm:BV to Sobo intro}
	Let $u\in\SBV(\R^d)$. If $Mu\in \BV_{\loc}(\R^d)$, then
	$M u\in W_{\loc}^{1,1}(\R^d)$. 
\end{theorem}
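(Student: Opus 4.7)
The plan is to use the ACL characterization: since $Mu\in\BV_{\loc}(\R^d)$ by hypothesis, it suffices to show that the singular part of $DMu$ vanishes, i.e.\ both the jump part $D^jMu$ and the Cantor part $D^cMu$ are zero.

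The jump part is immediate from Theorem~\ref{thm:quasicontinuity of noncentered maximal function intro}. Applied with $\Om=\R^d$ it yields that $Mu$ is $1$-quasicontinuous on $\R^d$. Since sets of zero $1$-capacity have zero $\cH^{d-1}$-measure, this forces the jump set of $Mu$ to be $\cH^{d-1}$-null, and hence $D^jMu=0$.

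For the Cantor part I would argue by slicing, following the organization of Sections~\ref{sec:continuity and Lusin}--\ref{sec:Sobolev property}. On almost every line $L$ parallel to a coordinate axis, BV-slicing gives $Mu|_L\in\BV(L)$, and the slicing of the jump set together with the previous paragraph shows that the jump set of $Mu|_L$ is empty; its precise representative is therefore continuous. By Banach--Zarecki, a continuous BV function on the line is absolutely continuous iff it has the Lusin~$(N)$ property. To establish $(N)$ I would split $L$ into the open set $L\cap\{Mu>|u|\}$ and the coincidence set $L\cap\{Mu=|u|\}$. On the former, Luiro-type arguments using maximizing balls for the non-centered maximal operator give enough local regularity to yield $(N)$. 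On the latter, $Mu=|u|$, and here the hypothesis $u\in\SBV(\R^d)$ enters: via the Lipschitz chain rule one has $|u|\in\SBV(\R^d)$, and by SBV-slicing $(|u|)|_L$ has no Cantor part on a.e.\ line; combined with the already established absence of jumps on $Mu$, this gives $(N)$ on the coincidence set as well.

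The main obstacle is this Lusin~$(N)$ step, and specifically the gluing of the two regimes across the boundary of the coincidence set. Tanaka's one-dimensional theorem cannot be invoked directly because $Mu|_L$ is not the one-dimensional maximal function of $u|_L$ (balls in $\R^d$ intersecting $L$ are not intervals of $L$). The SBV hypothesis is used precisely to rule out Cantor behavior leaking from $u$ into $Mu$ through the coincidence set; under merely $u\in\BV$ this mechanism would be expected to fail.
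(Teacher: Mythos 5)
Your outline is essentially the paper's own route: slicing, continuity of $Mu$ on almost every line, the Lusin property via the dichotomy between $\{Mu>|u|^{\vee}\}$ and the coincidence set, and Banach--Zarecki to conclude ACL, hence $W^{1,1}_{\loc}$ given $Mu\in\BV_{\loc}$. The coincidence-set half is handled exactly as in the paper (Lemma \ref{lem:1d abs cont} applied to the SBV slices $u_z$, plus countability of $(J_u)_z$), and your closing remarks about Tanaka and about where SBV enters are accurate. Two framing points: the worry about ``gluing the two regimes'' is unfounded, since the Lusin property is verified separately on the two pieces of the null set $N$ and is stable under such decompositions; and continuity of $Mu$ on almost every line should be extracted from the quasicontinuity of $Mu$ together with the projection estimate $2\mathcal H^{d-1}(\pi(A))\le\capa_1(A)$ of Proposition \ref{prop:capacity and Hausdorff measure}, not from emptiness of the jump set of the slice, which only controls a representative of $(Mu)_z$ rather than the pointwise-defined restriction of $Mu$ to the line.

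The genuine gap is the step you yourself flag: on $\{Mu>|u|^{\vee}\}$ you assert that ``Luiro-type arguments using maximizing balls give enough local regularity to yield (N)'' without identifying the regularity or proving it, and this is precisely where the difficulty of the theorem sits. The paper's mechanism is the following. Fix $x_0\notin U$ with $Mu(x_0)=\alpha>|u|^{\vee}(x_0)$, where $U$ is a small-capacity open set chosen via Theorem \ref{thm:quasisemicontinuity} and Lemma \ref{lem:uniform convergence of u}. Upper semicontinuity of $u^{\vee}$ off the exceptional set, the uniform smallness of $\frac{1}{\mathcal L^d(B(x,r))}\int_{B(x,r)\cap G}u\,dy$ for small $r$, and lower semicontinuity of $Mu$ combine to show that for every $x\in B(x_0,R)\setminus U$ and every ball of radius $r<R$ containing $x$, the average of $u$ is at most $\alpha-\delta/4<Mu(x)$; hence $Mu=M_{R}u$ on $B(x_0,R)\setminus U$, and $M_R u$ is Lipschitz by Proposition \ref{prop:Lipschitz continuity}, which gives the Lusin property there. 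Note that this last proposition is why the paper's general statement assumes $u\in L^1(\R^d)$ or $u\in L^{\infty}(\R^d)$ (automatic for $u\in\SBV(\R^d)$ as in the present theorem), and that the lines must be chosen to avoid $\pi(U)$. Without an argument of this kind, your treatment of the set $\{Mu>|u|^{\vee}\}$ is incomplete, and the proof does not close.
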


In a potential breakthrough toward a solution to the $W^{1,1}$-problem,
Weigt \cite{Wei} has shown very recently
that for a set of finite perimeter $E\subset\Om$, we have
$M_{\Om}\mathbbm{1}_E\in\BV_{\loc}(\Om)$ such that $|D M_{\Om}\mathbbm{1}_E|(\Om)$
is at most a constant times $|D\mathbbm{1}_E|(\Om)$.
We can utilize this result and go a step further to the desired Sobolev regularity
at least in the global case $\Om=\R^d$, as follows.

\begin{theorem}\label{thm:perimeter to Sobo intro}
Let $E\subset \R^d$ be a set of finite perimeter.
Then $M\mathbbm{1}_E\in W^{1,1}_{\loc}(\R^d)$ with
$\Vert \nabla M\mathbbm{1}_E\Vert_{L^1(\R^d)}
\le C_d|D\mathbbm{1}_E|(\R^d)$, where $C_d$ only depends on the dimension $d$.
\end{theorem}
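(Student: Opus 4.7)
The proof is essentially an assembly of three ingredients: Weigt's BV-estimate, Theorem \ref{thm:BV to Sobo intro}, and the standard observation that characteristic functions of finite-perimeter sets have no Cantor part.

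\textbf{Step 1: Reduction to SBV.} First I would verify that $\mathbbm{1}_E \in \SBV(\R^d)$. The distributional derivative $D\mathbbm{1}_E$ is concentrated on the essential boundary $\partial^* E$ and is of the form $\nu_E\,\mathcal{H}^{d-1}\mres\partial^* E$, where $\nu_E$ is the measure-theoretic inner normal. In particular, the Cantor part of $D\mathbbm{1}_E$ vanishes, and the absolutely continuous part vanishes as well, so $\mathbbm{1}_E$ is SBV (in fact a jump function).

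\textbf{Step 2: Apply Weigt's theorem globally.} Taking $\Om=\R^d$ in the recent result of Weigt \cite{Wei} quoted in the introduction, we obtain $M\mathbbm{1}_E \in \BV_{\loc}(\R^d)$ together with the total variation bound
\[
|D M\mathbbm{1}_E|(\R^d) \le C_d\, |D\mathbbm{1}_E|(\R^d).
\]
Here the constant $C_d$ is the one furnished by \cite{Wei}, depending only on the dimension.

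\textbf{Step 3: Upgrade BV to Sobolev via Theorem \ref{thm:BV to Sobo intro}.} Since $\mathbbm{1}_E\in\SBV(\R^d)$ by Step~1 and $M\mathbbm{1}_E\in\BV_{\loc}(\R^d)$ by Step~2, the hypotheses of Theorem \ref{thm:BV to Sobo intro} are met, and we conclude $M\mathbbm{1}_E \in W^{1,1}_{\loc}(\R^d)$. For a Sobolev function, the total variation equals the $L^1$-norm of the weak gradient, i.e.\ $|DM\mathbbm{1}_E|(\R^d)=\|\nabla M\mathbbm{1}_E\|_{L^1(\R^d)}$, so combining with the estimate in Step 2 gives the stated inequality.

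\textbf{Main obstacle.} There is no essentially new technical obstacle at this stage: all of the real work has been done in \cite{Wei} (the BV-bound) and in Theorem \ref{thm:BV to Sobo intro} (the SBV $\Rightarrow$ Sobolev upgrade). The only delicate point worth double-checking is that Weigt's estimate, as stated for a general open $\Om$, is consistent when specialized to $\Om=\R^d$ with a \emph{global} finite-perimeter set, so that the dimension-only constant $C_d$ in Theorem \ref{thm:perimeter to Sobo intro} is legitimate; this amounts to reading off Weigt's proof in the global setting. One could also worry about whether Theorem \ref{thm:BV to Sobo intro} requires $u\in\SBV(\R^d)$ globally rather than merely SBV on compact pieces, but $\mathbbm{1}_E$ is globally SBV by Step~1, so this is automatic.
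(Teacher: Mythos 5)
Your proof follows exactly the paper's route: Weigt's BV bound combined with the SBV-to-Sobolev upgrade, and the real work is indeed all in those two ingredients. One small correction to Step 1: a set of finite perimeter need not have finite Lebesgue measure (e.g.\ a half-space), so $\mathbbm{1}_E$ need not lie in $L^1(\R^d)$ and hence need not belong to $\SBV(\R^d)$ as you claim; what is always true is $\mathbbm{1}_E\in\SBV_{\loc}(\R^d)\cap L^{\infty}(\R^d)$, which is precisely why the paper invokes the more general Theorem \ref{thm:BV to Sobo} (whose hypotheses include the $L^{\infty}$ case) rather than the $L^1$-based Theorem \ref{thm:BV to Sobo intro}. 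With that substitution your argument coincides with the paper's proof.
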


Finally, in Sections \ref{sec:formula for derivative}
and \ref{sec:the 1d case} we study formulas
for the gradient of the maximal function, as well
as some further properties in the case $d=1$.

\paragraph{Acknowledgments.}
The author wishes to thank Julian Weigt for comments and for suggesting the proof of 
Lemma \ref{lem:point in closure of ball}. Part of the research for this paper was done
while the author was employed at the University of Augsburg.

\section{Notation and definitions}\label{sec:notation}

\subsection{Basic notation}

We will always work in the Euclidean space $\R^d$, $d\ge 1$.
We denote the $d$-dimensional Lebesgue measure by $\mathcal L^d$ and the
$s$-dimensional Hausdorff measure by $\mathcal H^{s}$, $s\ge 0$.
We denote the characteristic function of a set $E\subset\R^d$ by $\mathbbm{1}_E$.

We write $B(x,r)$ for an open ball in~$\R^{d}$ with center~$x$
and radius~$r$, that is, $\{y \in \R^d \colon |y-x|<r\}$, and we write $\mathbb S^{d-1}$ 
for the unit sphere in $\R^d$, that is, $\{y \in \R^d \colon |y| =1\}$.
When we consider closed balls, we always specify this by the bar $\overline{B}(x,r)$.

For a function $u$ we write $u_+ \coloneqq \max\{u,0\}$ for its positive part, and 
if it is integrable on some measurable set $D \subset \R^d$ of positive and finite Lebesgue
measure, we write
\[
\vint{D} u(y) \,dy \coloneqq \frac{1}{\mathcal L^d(D)} \int_D u(y) \,dy
\] 
for its mean value on $D$.

We will always denote by
$\Om\subset\R^d$ a nonempty open set.
The Sobolev space $W^{1,1}(\Om)$ consists of functions $u\in L^1(\Om)$
whose first weak partial derivatives $D_k u$, $k=1,\ldots,d$, belong to $L^1(\Om)$.

The Sobolev $1$-capacity of a set $A\subset \R^d$ is defined by
\[
\capa_1(A):=\inf \int_{\R^n} \big(|u|+|Du|\big)\,dy,
\]
where the infimum is taken over Sobolev functions $u\in W^{1,1}(\R^d)$ satisfying
$u\ge 1$ in a neighborhood of $A$. The Sobolev $1$-capacity is countably
subadditive.
Using a cutoff function we find that for every ball $B(x,r)$ with $0<r\le 1$, we have
\begin{equation}\label{eq:capacity ball estimate}
\capa_1(B(x,r))\le C_0 r^{d-1}
\end{equation}
for a constant $C_0$ depending only on $d$.

We say that a function $v$ on $\Om$ (generally we understand functions
to take values in $[-\infty,\infty]$)
is $1$-quasicontinuous if for every $\eps>0$
there exists an open set $G\subset \Om$ such that $\capa_1(G)<\eps$
and $v|_{\Om\setminus G}$ is finite and continuous.

By e.g. \cite[Theorem 4.3, Theorem 5.1]{HaKi} we know that for any $A\subset \R^d$,
\begin{equation}\label{eq:null sets of capa and Hausdorff}
\capa_1(A)=0\quad\textrm{if and only if}\quad\mathcal H^{d-1}(A)=0.
\end{equation}

The non-centered maximal function of a measurable function
$u$ on $\Om$ is defined by
\[
M_{\Om} u(x):=\sup_{x\in B(z,r)\subset \Om}\,\,\vint{B(z,r)}|u|\,dy,\quad x\in\Om.
\]

Sometimes we do not mention $\Om$ and then it is understood that
$\Om=\R^d$, so that $Mu:=M_{\R^d}u$.

For $\ell\in\N$, we denote by
$\mathcal M(\Om;\R^{\ell})$ the Banach space of vector-valued
Radon measures $\mu$, equipped with the
\emph{total variation norm} $|\mu|(\Om)<\infty$, which is defined
relative to the Euclidean norm on $\R^{\ell}$.
By the Riesz
representation theorem, $\mathcal M(\Om;\R^{\ell})$ can be identified
with the dual space of $C_0(\Om;\R^{\ell})$ through the duality
pairing
$\langle\phi,\mu\rangle\coloneqq \int_{\Om} \phi\cdot d\mu\coloneqq \sum_{j=1}^{\ell} 
\int_{\Om} \phi_j \,d\mu_j$.
Thus weak* convergence $\mu_i\overset{*}{\rightharpoondown}\mu$
in $\mathcal M(\Om;\R^{\ell})$ means $\langle\phi,\mu_i\rangle\to \langle\phi,\mu\rangle$ for all $\phi\in C_0(\Om;\R^{\ell})$.

We denote the restriction of a measure $\nu$ to a set $A\subset\R^d$
by $\nu \mres A$, that is,
\[
\nu \mres A(H):=\nu(A\cap H),\quad H\subset \R^d.
\]
For a vector-valued Radon measure $\gamma\in\mathcal M(\Om;\R^{\ell})$
and a positive Radon measure, we
can write the Radon-Nikodym decomposition
\[
\gamma=\gamma^a+\gamma^s=\frac{d\gamma}{d\mu}\,d\mu+\gamma^s
\]
of $\gamma$ with respect to $\mu$,
where $\frac{d\gamma}{d\mu}\in L^1(\Om,\mu;\R^{\ell})$.

\subsection{Functions of bounded variation}\label{sec:BV functions}

The theory of $\BV$ functions presented here can be found e.g. in \cite{AFP},
and we give precise references only for a few key facts.
A function
$u\in L^1(\Omega)$ is a function of bounded variation,
denoted $u\in \BV(\Omega)$, if its distributional derivative
is an $\R^{d}$-valued Radon measure with finite total variation. This means that
there exists a (unique) Radon measure $Du\in \mathcal M(\Om;\R^{d})$
such that for all $\varphi\in C_c^1(\Omega)$, the integration-by-parts formula
\[
\int_{\Omega}u\frac{\partial\varphi}{\partial y_j}\,dy
=-\int_{\Omega}\varphi\,d(Du)_j,\quad j=1,\ldots,d
\]
holds.

If we do not know a priori that a function $u\in L^1_{\loc}(\Om)$
is a BV function, we consider
\begin{equation}\label{eq:definition of total variation}
\Var(u,\Om):=\sup\left\{\int_{\Om}u\dive\varphi\,dy,\,\varphi\in C_c^{1}(\Om),
\,|\varphi|\le 1\right\}.
\end{equation}
If $\Var(u,\Om)<\infty$, then the Radon measure $Du$ exists and $\Var(u,\Om)=|Du|(\Om)$
by the Riesz representation theorem, and $u\in\BV(\Om)$ provided that $u\in L^1(\Om)$.
If $E\subset\R^d$ with $\Var(\mathbbm{1}_E,\R^d)<\infty$, we say that $E$ is a set of finite perimeter.

A fact that we will use many times is that if $u\in\BV_{\loc}(\Om)$, then also $|u|\in\BV_{\loc}(\Om)$
with $|D|u||(\Om)\le |Du|(\Om)$.

Let $u$ be a function on $\Om$.
We say that $x\in\Om$ is a Lebesgue point of $u$ if
\begin{equation}\label{eq:Lebesgue point}
\lim_{r\to 0}\,\vint{B(x,r)}|u(y)-\widetilde{u}(x)|\,dy=0
\end{equation}
for some $\widetilde{u}(x)\in\R$. We denote by $S_u\subset\Om$ the set where
this condition fails and call it the \emph{approximate discontinuity set}.

Given $\nu\in \mathbb S^{d-1}$, we define the half-balls
\begin{align*}
B_{\nu}^+(x,r)\coloneqq \{y\in B(x,r)\colon \langle y-x,\nu\rangle>0\},\\
B_{\nu}^-(x,r)\coloneqq \{y\in B(x,r)\colon \langle y-x,\nu\rangle<0\}.
\end{align*}
We say that $x\in \Om$ is an approximate jump point of $u$ if there exist
$\nu\in \mathbb S^{d-1}$ and distinct numbers $u^+(x),u^-(x)\in\R$ such that
\begin{equation}\label{eq:jump value 1}
\lim_{r\to 0}\,\vint{B_{\nu}^+(x,r)}|u(y)-u^+(x)|\,dy=0
\end{equation}
and
\[
\lim_{r\to 0}\,\vint{B_{\nu}^-(x,r)}|u(y)-u^-(x)|\,dy=0.
\]
The set of all approximate jump points is denoted by $J_u$.
We have that $\mathcal H^{d-1}(S_u\setminus J_u)=0$, see \cite[Theorem 3.78]{AFP}.

The lower and upper approximate limits of a function $u$
are defined respectively by
\[
u^{\wedge}(x)\coloneqq 
\sup\left\{t\in\R\colon \lim_{r\to 0}\frac{\mathcal L^d(B(x,r)\cap\{u<t\})}{\mathcal L^d(B(x,r))}=0\right\}
\]
and
\[
u^{\vee}(x)\coloneqq 
\inf\left\{t\in\R\colon \lim_{r\to 0}\frac{\mathcal L^d(B(x,r)\cap\{u>t\})}{\mathcal L^d(B(x,r))}=0\right\}.
\]

Note that for all $x\in \Om\setminus S_u$, we have
$\widetilde{u}(x)=u^{\wedge}(x)=u^{\vee}(x)$. Also, for all
$x\in J_u$, we have
$u^{\wedge}(x)=\min\{u^{-}(x),u^+(x)\}$ and $u^{\vee}(x)=\max\{u^{-}(x),u^+(x)\}$.

We write the Radon-Nikodym decomposition of the variation measure of $u$ into the absolutely continuous and singular parts as $Du=D^a u+D^s u$. Furthermore, we define the Cantor and jump parts of $Du$ by
\begin{equation}\label{eq:Dju and Dcu}
D^c u\coloneqq  D^s u\mres (\Om\setminus S_u),\qquad D^j u\coloneqq D^s u\mres J_u.
\end{equation}
Since $\mathcal H^{d-1}(S_u\setminus J_u)=0$ and $|Du|$ vanishes on
$\mathcal H^{d-1}$-negligible sets, we get the decomposition
\[
Du=D^a u+ D^c u+ D^j u.
\]
We say that $u\in\BV(\Om)$ is a special function of bounded variation, and
denote $u\in\SBV(\Om)$, if $|D^c u|(\Om)=0$.

\subsection{One-dimensional sections of $\BV$ functions}\label{subsec:one dimensional sections}\label{subsec:1d sections}

For basic results in the one-dimensional case $d=1$
(with slightly different notation from ours), see \cite[Section 3.2]{AFP}.
In this setting, given an open set $\Om\subset\R$ and $u\in\BV_{\loc}(\Om)$,
we have $J_u=S_u$, $J_u$ is at most countable,
and $Du (\{x\})=0$ for every $x\in\Om\setminus J_u$.
For every $x,\widetilde{x}\in \Om$ in a connected component of $\Om$, we have
\begin{equation}\label{eq:fundamental theorem of calculus for BV}
|u^{\vee}(\widetilde{x})-u^{\vee}(x)|\le |Du|([x,\widetilde{x}]).
\end{equation}
Thus at every point outside $S_u$, the pointwise representative
$u^{\wedge}=u^{\vee}=\widetilde{u}$ is continuous.
Moreover, $u^{\vee}$ is upper semicontinuous.

In $\R^d$, denote by $\pi\colon\R^d\to \R^{d-1}$ the orthogonal projection onto $\R^{d-1}$:
for $x=(x_1,\ldots,x_d)\in\R^d$,
\[
\pi((x_1,\ldots,x_d)):=(x_1,\ldots,x_{d-1}).
\] 
Denote the standard basis vectors by $e_k$, $k=1,\ldots,d$.
For an open set $\Om\subset\R^d$,
and $u\in\BV(\Om)$, denote $D_k u\coloneqq \langle Du,e_k\rangle$.
For any fixed $k\in\{1,\ldots,d\}$ --- for simplicity we can assume $k=d$ ---
for every $z\in\pi(\Om)$ we denote the slices of $\Om$ at $(z,0)$ in $e_d$-direction by
\[
\Om_z\coloneqq \{t\in\R\colon (z,t e_d) \in \Om\}.
\]
We also denote
$u_z(t)\coloneqq u(z,t)$ for $z\in\pi(\Om)$ and $t\in \Om_z$.
We know that for $\mathcal L^{d-1}$-almost
every $z\in\pi(\Om)$, we have
$u_z\in \BV(\Om_z)$ (see \cite[Theorem 3.103]{AFP})
and also, if $u\in\SBV(\Om)$, then $u_z\in \SBV(\Om_z)$
(see \cite[Eq. (3.108)]{AFP}).
On the other hand, if $u_z$ is absolutely continuous for almost every $z\in \pi(\Om)$,
and similarly in the other coordinate directions,
then $u\in W^{1,1}_{\loc}(\Om)$.
Finally, for $\mathcal L^{d-1}$-almost every $z\in\pi(\Om)$ it holds that
\begin{equation}\label{eq:sections and jump sets}
S_{u_z}=(S_u)_z\quad\textrm{and}\quad
(\widetilde{u})_z(t)=\widetilde{u_z}(t)\ 
\textrm{for every }t\in \R\setminus S_{u_z},
\end{equation}
see \cite[Theorem 3.108]{AFP}.

\section{Preliminary results}\label{sec:preliminary results}

In this section we record and prove some preliminary results.
Let $\Om\subset\R^d$ always denote an arbitrary nonempty open set.

The following fact is generally well known and used e.g. in \cite{Lui}.
It simply says that in the definition of the non-centered maximal function,
the supremum can be taken over balls whose \emph{closure} contains the point $x$.
For the maximal function $M_{\Om}u$, this fact is not as trivial
as it is for the global version $Mu=M_{\R^d}u$, so we give a short proof.

\begin{lemma}\label{lem:point in closure of ball}
	Let $u$ be a measurable function on $\Om$. Then we have
	\[
	M_{\Om} u(x)=\sup_{x\in \overline{B}(z,r),\,B(z,r)\subset \Om}\,\,
	\vint{B(z,r)}|u|\,dy,\quad x\in\Om.
	\]
\end{lemma}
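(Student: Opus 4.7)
The inequality "$\le$" is immediate: the right-hand supremum is taken over a larger family of balls than the one defining $M_{\Om} u(x)$, since every open ball $B(z,r) \subset \Om$ with $x \in B(z,r)$ automatically satisfies $x \in \overline{B}(z,r)$. For the reverse inequality I fix $B(z, r) \subset \Om$ with $x \in \overline{B}(z, r)$. The case $x \in B(z, r)$ is trivial, so assume $|x - z| = r$ and set $\hat v := (x - z)/r$.

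My plan is to approximate $B(z, r)$ by the one-parameter family of balls tangent to $\partial B(z, r)$ at $x$ from the inside: for $t \in (0, r)$ put $B_t := B(z + t\hat v,\, r - t)$. Two short triangle-inequality computations will give (i) $\overline{B}_t \cap \partial B(z, r) = \{x\}$, so that $\overline{B}_t \subset B(z, r) \cup \{x\} \subset \Om$; and (ii) the family is nested with $B_{t_2} \subset B_{t_1}$ whenever $t_1 < t_2$, and $\bigcup_{t \in (0, r)} B_t = B(z, r)$, while $\mathcal L^d(B_t) \to \mathcal L^d(B(z, r))$ as $t \to 0^+$. Claim (i) is the equality case of the triangle inequality $|y-z| \le |y-z-t\hat v| + t$, and (ii) follows similarly from $|y-z-t_1\hat v| \le |y-z-t_2\hat v| + (t_2-t_1)$.

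Using (i) and the compactness of $\overline{B}_t$, for each fixed $t$ there exists $\delta_0 = \delta_0(t) > 0$ such that $B(z + t \hat v,\, r - t + \delta) \subset \Om$ for every $\delta \in (0, \delta_0)$. Each of these slightly enlarged balls has $x$ in its interior, so its average of $|u|$ is at most $M_\Om u(x)$; letting $\delta \to 0^+$ by dominated convergence (the case $M_\Om u(x) = \infty$ being trivial, so one may assume $|u|$ integrable on the enlarged ball since its averages are bounded by $M_\Om u(x)$) yields $\vint{B_t} |u|\,dy \le M_\Om u(x)$. Combining this with (ii) and monotone convergence gives $\vint{B_t} |u|\,dy \to \vint{B(z, r)} |u|\,dy$ as $t \searrow 0^+$, whence $\vint{B(z, r)} |u|\,dy \le M_\Om u(x)$. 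Taking the supremum over admissible $(z, r)$ on the right-hand side then proves the lemma. The main subtlety to check is really just the tangency claim (i), which ensures that a one-sided enlargement of $B_t$ toward $x$ stays inside $\Om$; everything else is standard continuity of the averaging operator.
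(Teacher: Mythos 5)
Your proof is correct and follows essentially the same strategy as the paper's: approximate $B(z,r)$ from inside by balls touching $\partial B(z,r)$ only at $x$, then perturb them slightly so that they contain $x$ in their interior while remaining in $\Om$, and pass to the limit in the averages. The only cosmetic difference is that the paper realizes the perturbation by translating a shrunken copy of $B(z,r)$ toward $x$ (using a small ball $B(x,\delta)\subset\Om$), whereas you inflate the radius of the internally tangent ball $B_t$ (using compactness of $\overline{B}_t\subset\Om$).
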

\begin{proof}
	Consider a ball $B(z,r)\subset \Om$ and a point $x\in \Om\cap \partial B(z,r)$.
	For some $\delta>0$, we have $B(x,\delta)\subset \Om$. Let $0<\eps<1/3$.
	The ball $B(z+\eps(x-z),(1-\eps)r)$ is contained in $B(z,r)$.
	Then clearly for sufficiently small $t\in (0,\eps)$, the ball
	\[
	B(z+(\eps+t)(x-z),(1-\eps)r)
	\]
	is contained in $B(z,r)\cup B(x,\delta)\subset \Om$,
	and contains $x$ and contains $B(z,(1-3\eps)r)$.
	Thus (some of the integrals below could be $+\infty$)
	\[
	M_{\Om}u(x)\ge \vint{B(z+(\eps+t)(x-z),(1-\eps)r)}|u|\,dy
	\ge \frac{1}{\mathcal L^d(B(z,r))}\int_{B(z,(1-3\eps)r)}|u|\,dy.
	\]
	Letting $\eps\to 0$, we obtain
	\[
	M_{\Om}u(x)\ge \vint{B(z,r)}|u|\,dy.
	\]
\end{proof}

The following simple property of the non-centered maximal function
is crucial for proving the $1$-quasicontinuity of $M_{\Om} u$.

\begin{proposition}\label{prop:maximal function and upper representative}
	Let $u\in\BV_{\loc}(\Om)$.
	Then $M_{\Om} u(x)\ge u^{\vee}(x)$ for
	every $x\in \Om\setminus (S_u\setminus J_u)$, that is, for
	$\mathcal H^{d-1}$-almost every $x\in\Om$. 
\end{proposition}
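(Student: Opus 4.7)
The plan is to split into two cases according to whether $x$ is a Lebesgue point or an approximate jump point. Since $\mathcal{H}^{d-1}(S_u \setminus J_u) = 0$, these two cases together cover $\mathcal{H}^{d-1}$-almost every $x \in \Om$, and the statement already excludes exactly the null set $S_u \setminus J_u$.

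In the first case, $x \in \Om \setminus S_u$, so $\widetilde u(x) = u^\vee(x)$ exists, and combining the Lebesgue-point condition \eqref{eq:Lebesgue point} with the reverse triangle inequality gives
\[
\lim_{r \to 0}\,\vint{B(x,r)}|u|\,dy = |\widetilde u(x)|.
\]
Taking $r$ small enough that $B(x,r) \subset \Om$, we have $M_\Om u(x) \ge \vint{B(x,r)}|u|\,dy$, and letting $r \to 0$ yields $M_\Om u(x) \ge |\widetilde u(x)| \ge \widetilde u(x) = u^\vee(x)$.

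The jump case $x \in J_u$ is the substantive one. After possibly replacing $\nu$ by $-\nu$, we may assume $u^\vee(x) = u^+(x)$. The key geometric idea is to test $M_\Om u$ against the family of \emph{tangent balls} $B_s \coloneqq B(x + s\nu,\, s)$ for small $s > 0$: any $y \in B_s$ satisfies $\langle y - x, \nu\rangle = \langle y - (x+s\nu), \nu\rangle + s > 0$ and $|y - x| < 2s$, so $B_s \subset B_\nu^+(x, 2s)$; and $|x - (x+s\nu)| = s$ shows $x \in \overline{B}_s$, so for $s$ small enough Lemma \ref{lem:point in closure of ball} lets us use $B_s$ in the supremum defining $M_\Om u(x)$. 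Since $\mathcal L^d(B_\nu^+(x, 2s)) = 2^{d-1}\mathcal L^d(B_s)$, the jump condition \eqref{eq:jump value 1} yields
\[
\vint{B_s}|u - u^+(x)|\,dy \;\le\; 2^{d-1}\vint{B_\nu^+(x, 2s)}|u - u^+(x)|\,dy \;\longrightarrow\; 0
\quad \text{as } s \to 0,
\]
so $\vint{B_s}|u|\,dy \to |u^+(x)|$, and hence $M_\Om u(x) \ge |u^+(x)| \ge u^+(x) = u^\vee(x)$.

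The main (and really the only) obstacle is this choice of tangent ball at a jump point: one needs a ball whose closure just barely touches $x$ but whose interior lies entirely on the side where the larger trace $u^+(x)$ controls the average of $|u|$. This is precisely why Lemma \ref{lem:point in closure of ball} was recorded beforehand, and why the author credits J.~Weigt for suggesting its proof.
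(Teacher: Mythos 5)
Your proof is correct and follows essentially the same route as the paper: the Lebesgue-point case is immediate, and at a jump point one tests $M_\Om u$ on the tangent balls $B(x+s\nu,s)$ (the paper uses exactly $B(x+(r/2)\nu,r/2)$), invoking Lemma \ref{lem:point in closure of ball} since $x$ lies only on their boundary. Your version spells out the inclusion $B_s\subset B_\nu^+(x,2s)$ and the resulting $2^{d-1}$ comparison of averages, which the paper leaves implicit, but the argument is the same.
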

\begin{proof}
	We obviously have $M_{\Om}u(x)\ge \widetilde{u}(x)=u^{\vee}(x)$
	for every $x\in \Om\setminus S_u$,
	that is, for Lebesgue points $x$ (recall \eqref{eq:Lebesgue point}).
	Assume then that $x\in J_u$.
	Now $u^{\vee}(x)=\max\{u^{-}(x),u^+(x)\}$ (recall \eqref{eq:jump value 1}).
	Supposing $u^{\vee}(x)=u^+(x)$, we obtain using
	Lemma \ref{lem:point in closure of ball},
	\[
	M_{\Om}u(x)\ge \limsup_{r\to 0}\vint{B(x+(r/2)\nu,r/2)}u\,dy=u^{\vee}(x).
	\]
	The case $u^{\vee}(x)=u^-(x)$ is similar.
	The proof is completed by recalling that $\mathcal H^{d-1}(S_u\setminus J_u)=0$.
\end{proof}

As noted by Aldaz and P\'erez L\'azaro \cite{APL07}, a BV function
need not have any upper semicontinuous representative when $d\ge 2$,
which causes difficulties
since usually such a representative is used in proving the continuity of $M_{\Om}u$.
However, the following quasi-semicontinuity result given in \cite[Theorem 2.5]{CDLP} is a useful
substitute.
Alternatively, see
\cite[Theorem 1.1]{LaSh} and \cite[Corollary 4.2]{L-SA}
for a proof of this result in more general metric spaces.

\begin{theorem}\label{thm:quasisemicontinuity}
	Let $u\in\BV_{\loc}(\Om)$ and $\eps>0$. Then there exists an open set $G\subset\Om$
	such that $\capa_1(G)<\eps$ and $u^{\wedge}|_{\Om\setminus G}$ is finite and lower
	semicontinuous, and $u^{\vee}|_{\Om\setminus G}$ is finite and upper
	semicontinuous.
\end{theorem}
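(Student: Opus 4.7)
The plan is to deduce the conclusion from a capacitary weak-type estimate for BV functions, proved via the coarea formula and the boxing inequality, combined with approximation by smooth functions. The first step is a reduction to bounded $u\in\BV(\Om)$: truncating at levels $\pm N$ does not increase $|Du|(\Om)$, and the truncated functions have upper/lower approximate limits agreeing with those of $u$ on $\{|u^\vee|,|u^\wedge|<N\}$. Since $\mathcal H^{d-1}(\{|u^\vee|=\infty\})=0$ for $u\in\BV_{\loc}$, the complementary set has vanishing $1$-capacity by \eqref{eq:null sets of capa and Hausdorff}, and countable subadditivity of $\capa_1$ handles the limit $N\to\infty$ and simultaneously delivers the finiteness of $u^\vee$, $u^\wedge$ on $\Om\setminus G$.

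The key analytic input is the capacitary weak-type bound
\[
\capa_1\bigl(\{x\in\Om:v^\vee(x)>\lambda\}\cup\{x\in\Om:v^\wedge(x)<-\lambda\}\bigr)\le\frac{C}{\lambda}\bigl(\Vert v\Vert_{L^1(\Om)}+|Dv|(\Om)\bigr)
\]
valid for $v\in\BV(\Om)$ and $\lambda>0$. I would prove this by combining the coarea formula $|Dv|(\Om)=\int_{\R}P(\{v>t\},\Om)\,dt$, the inclusion (up to $\mathcal H^{d-1}$-negligible sets) $\{v^\wedge<t<v^\vee\}\subset\partial^*\{v>t\}$, and the boxing inequality $\capa_1(\partial^*E)\le CP(E,\Om)$ for sets of finite perimeter (itself obtained by a Vitali/Besicovitch cover of $\partial^* E$ together with the relative isoperimetric inequality applied on each covering ball to produce a capacity-efficient Sobolev cutoff). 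Integration in $t$ yields the estimate, and by outer regularity of $\capa_1$ the exceptional set may be taken to be open.

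With this in hand, I pick smooth approximants $u_k\in C^\infty(\Om)\cap\BV(\Om)$ with $u_k\to u$ in $L^1(\Om)$ and $|Du_k|(\Om)\to|Du|(\Om)$ (Anzellotti--Giaquinta), and pass to a subsequence so that $\Vert u-u_k\Vert_{L^1(\Om)}+|D(u-u_k)|(\Om)\le 4^{-k}\eps$. Applying the weak-type bound to $v=u-u_k$ at level $\lambda=2^{-k}$, together with the identities $(u-u_k)^\vee=u^\vee-u_k$ and $(u-u_k)^\wedge=u^\wedge-u_k$ at every point (forced by the continuity of $u_k$ and the definitions via densities of superlevel sets), produces open sets $G_k\subset\Om$ with $\capa_1(G_k)\le C\,2^{-k}\eps$ such that $u_k-2^{-k}\le u^\wedge\le u^\vee\le u_k+2^{-k}$ on $\Om\setminus G_k$. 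Setting $G:=\bigcup_k G_k$ gives $\capa_1(G)<\eps$ after absorbing constants; on $\Om\setminus G$, the continuous functions $u_k+2^{-k}$ decrease uniformly to $u^\vee$ while $u_k-2^{-k}$ increase uniformly to $u^\wedge$. A uniform decreasing limit of continuous functions is upper semicontinuous, and an increasing one is lower semicontinuous, giving the claim.

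The main obstacle is the capacitary weak-type inequality in the form above: it must control $v^\vee$ and $v^\wedge$ at \emph{every} point, including at jump points where $v^\vee\ne v^\wedge$, so that the resulting exceptional set (after passing to an open neighborhood via outer regularity) is genuinely open and has small capacity. The boxing inequality for sets of finite perimeter is exactly what is required, but its proof demands careful covering arguments combined with the relative isoperimetric inequality and a judicious choice of cutoff; the easier Sobolev version does not suffice, since $v\in\BV$ need not have any continuous representative in dimension $d\ge 2$. A secondary technicality is verifying the pointwise identities $(u-u_k)^\vee=u^\vee-u_k$ and $(u-u_k)^\wedge=u^\wedge-u_k$ at every point, which is straightforward from the definitions but must be done pointwise rather than merely $\mathcal H^{d-1}$-almost everywhere.
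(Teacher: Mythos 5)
The paper does not prove this theorem; it is quoted from \cite[Theorem 2.5]{CDLP}, with alternative proofs in \cite{LaSh} and \cite{L-SA}. So your attempt must be judged on its own, and it contains a fatal gap in the approximation step. You ask for smooth $u_k$ with $\Vert u-u_k\Vert_{L^1(\Om)}+|D(u-u_k)|(\Om)\le 4^{-k}\eps$. Strict convergence (Anzellotti--Giaquinta, i.e.\ $u_k\to u$ in $L^1$ and $|Du_k|(\Om)\to|Du|(\Om)$) does \emph{not} give this: since $Du_k=\nabla u_k\,\mathcal L^d$ is absolutely continuous, the singular part of $D(u-u_k)$ is exactly $D^s u$, so $|D(u-u_k)|(\Om)\ge |D^s u|(\Om)$ for every $k$. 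Smooth (or $W^{1,1}$) functions are dense in $\BV$ in norm only within $W^{1,1}$ itself, so for any $u$ with a nontrivial jump or Cantor part your capacity bound degenerates to $\capa_1(G_k)\le C\,2^{k}\,|D^s u|(\Om)$, which blows up. The failure is structural, not technical: if your scheme worked, then on $\Om\setminus G$ you would get $u_k\to u^\wedge$ and $u_k\to u^\vee$ uniformly, forcing $u^\wedge=u^\vee$ there and making this common value continuous, i.e.\ every $\BV$ function would be $1$-quasicontinuous. That is false already for $u=\mathbbm{1}_{\{x_1>0\}}$ near a piece of the hyperplane $\{x_1=0\}$: the jump set has positive $\capa_1$ and cannot be swallowed by a set of capacity $<\eps$. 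The theorem deliberately claims only one-sided semicontinuity of each representative, precisely because jumps survive.

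Your first two paragraphs (truncation, the capacitary weak-type bound via coarea plus the boxing inequality, and the pointwise identities $(u-u_k)^\vee=u^\vee-u_k$ for continuous $u_k$) are sound and are indeed ingredients of the actual proofs in the literature. What must replace the norm-approximation step is an argument that never requires $|D(u-u_k)|(\Om)$ to be small: for instance, one works level by level, using the boxing inequality to show that for a set $E$ of finite perimeter the bad set where $\mathbbm{1}_E^\vee$ fails upper semicontinuity (respectively $\mathbbm{1}_E^\wedge$ fails lower semicontinuity) has capacity controlled by $P(E,\Om)$, and then assembles $u^\vee$, $u^\wedge$ from countably many superlevel sets via the coarea formula; alternatively one uses discrete convolutions (Whitney-type partitions of unity), for which the relevant capacity estimates are phrased in terms of $|Du|$ of small neighborhoods rather than $|D(u-u_k)|(\Om)$. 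Either route yields semicontinuity of each representative separately, which is the most one can hope for.
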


The following weak type estimate is well known and a proof can be found e.g.
in \cite[Lemma 4.3]{KKST}.

\begin{proposition}\label{prop:weak type estimate}
	Let $u\in\BV(\R^d)$. Then we have for every $t>0$ that
	\[
	\capa_1(\{x\in \R^d\colon Mu(x)>t\})
	\le C_1\frac{|Du|(\R^d)}{t},
	\]
	where $C_1$ only depends on $d$.
\end{proposition}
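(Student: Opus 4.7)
My plan is to follow the standard Vitali covering argument combined with the BV Poincaré inequality, in the spirit of \cite[Lemma 4.3]{KKST}. Set $E_t := \{x \in \R^d : Mu(x) > t\}$; this set is open because $M u$ is lower semicontinuous, and by the definition of the non-centered maximal function each $x \in E_t$ is contained in some ball $B_x = B(z_x,r_x)$ with $\vint_{B_x}|u|\,dy > t$. I would then apply the $5r$-covering theorem to extract a countable pairwise-disjoint subfamily $\{B_i := B(z_i,r_i)\}$ whose $5$-fold dilations still cover $E_t$.

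Next, using the countable subadditivity of $\capa_1$ and the ball estimate \eqref{eq:capacity ball estimate} (after an a priori reduction to balls of radius at most a fixed constant, which is possible because $\vint_{B_i}|u|\,dy > t$ forces $\mathcal L^d(B_i) < \|u\|_{L^1}/t$), I would deduce
\[
\capa_1(E_t) \le \sum_i \capa_1(5 B_i) \le C \sum_i r_i^{d-1}.
\]
The core task then becomes the estimate $\sum_i r_i^{d-1} \le C |Du|(\R^d)/t$. For this I would apply the $\BV$ Poincaré inequality on each $B_i$, namely $\vint_{B_i}|u - u_{B_i}|\,dy \le C\,|Du|(B_i)/r_i^{d-1}$, and split the family according to whether the oscillation or the mean of $|u|$ on $B_i$ dominates.

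If $\vint_{B_i}|u - u_{B_i}|\,dy \ge t/2$ (the \emph{oscillation-dominated} case) then Poincaré gives $r_i^{d-1} \le (2C/t)|Du|(B_i)$ directly, and summing over the disjoint family produces $(2C/t)|Du|(\R^d)$. The \emph{mean-dominated} case $|u_{B_i}| \ge t/2$ is what I expect to be the main obstacle: here the Poincaré inequality on $B_i$ gives no useful information, since the ball carries essentially no variation. To handle it one would either iterate the covering procedure at the level $t/2$, exploiting that the global BV-Sobolev embedding $\|u\|_{L^{d/(d-1)}(\R^d)} \le C|Du|(\R^d)$ forces geometric decay in the total measure of the mean-dominated balls at each stage, or replace $M u$ by a suitable discrete maximal function as in \cite{KKST}, which separates the local averaging from the covering and lets the weak-type $(1,1)$ estimate for the Riesz maximal function of $|Du|$ absorb the mean-dominated contribution. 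Either route requires care but delivers the desired $C_1|Du|(\R^d)/t$ bound on the sum.
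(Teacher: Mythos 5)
The paper itself does not prove this proposition: it is stated as well known and the reader is referred to \cite[Lemma 4.3]{KKST}, so there is no in-paper argument to compare your sketch against. Your covering-plus-Poincar\'e skeleton is indeed the standard one, and the oscillation-dominated case is handled correctly: Poincar\'e gives $r_i^{d-1}\le (2C/t)\,|Du|(B_i)$ and disjointness sums this to $(2C/t)\,|Du|(\R^d)$. The genuine gap is exactly where you say the ``main obstacle'' is: the mean-dominated balls are not actually dealt with. Your first suggested route does not work as described, because the Sobolev embedding controls $\sum_i \mathcal L^d(B_i)$, i.e.\ $\sum_i r_i^{d}$, whereas the quantity you must bound is $\sum_i r_i^{d-1}$, and with no lower bound on the individual radii the former gives no control on the latter. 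Your second route (``replace $Mu$ by a discrete maximal function as in \cite{KKST}'') is precisely the content of the cited proof, so deferring to it leaves the hard half of the argument missing. The standard resolution is the boxing inequality combined with the coarea formula: a ball on which the mean dominates and the oscillation is small lies mostly inside a super-level set $\{|u|>s\}$ with $s\approx t$, the boxing inequality bounds the relevant sum $\sum_i r_i^{d-1}$ by the perimeter (plus measure) of that set, and one integrates in $s$. Some version of this input is unavoidable.

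There is a second, smaller but instructive defect: the ``a priori reduction to balls of radius at most a fixed constant'' is not legitimate. The bound $\mathcal L^d(B_i)<\|u\|_{L^1}/t$ controls $r_i$ only in terms of $\|u\|_{L^1}$ and $t$, not by an absolute constant, and for radii larger than $1$ one has $\capa_1(B_i)\approx r_i^{d}$ rather than $r_i^{d-1}$. This cannot be repaired, because the inequality as stated is in fact false with only $|Du|(\R^d)$ on the right: for $u=\mathbbm{1}_{B(0,R)}$ and $t=1/2$ the set $\{Mu>t\}$ contains $B(0,R)$, so $\capa_1(\{Mu>t\})\ge \mathcal L^d(B(0,R))=\mathcal L^d(B(0,1))\,R^{d}$, while $|Du|(\R^d)/t=2d\,\mathcal L^d(B(0,1))\,R^{d-1}$, and the claimed bound fails for large $R$. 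The provable statement --- which is what \cite[Lemma 4.3]{KKST} actually gives, and is all that Proposition \ref{prop:weak type estimate inifinity} needs --- has $\|u\|_{L^1(\R^d)}+|Du|(\R^d)$ on the right-hand side; with that correction the large balls are absorbed by the $L^1$ term via $\mathcal L^d(B_i)<\frac{1}{t}\int_{B_i}|u|\,dy$, and only the small-ball, mean-dominated case remains as the real work.
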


We define auxiliary maximal operators $M_{\Om}^R$ and $M_{\Om,R}$, $R>0$, by
\[
M_{\Om}^R u(x):=\sup_{x\in B(z,r)\subset\Om,\,r< R}\,\,\vint{B(z,r)}|u|\,dy,
\quad x\in\Om,
\]
and
\begin{equation}\label{eq:auxiliary maximal operator}
M_{\Om,R} u(x):=\sup_{x\in B(z,r)\subset\Om,\,r\ge R}\,\,\vint{B(z,r)}|u|\,dy,
\quad x\in\Om.
\end{equation}

Obviously $M_{\Om}u=\max\{M_{\Om}^R u,M_{\Om,R} u\}$.
Again if $\Om=\R^d$, we omit it from the notation.

\begin{proposition}\label{prop:weak type estimate inifinity}
	Let $u\in\BV_{\loc}(\Om)$. Then $\capa_1(\{x\in \Om\colon M_{\Om}u(x)=\infty\})=0.$
\end{proposition}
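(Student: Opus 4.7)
The plan is to localize with cutoff functions and reduce to the global weak-type estimate of Proposition \ref{prop:weak type estimate}. I would exhaust $\Om$ by an increasing sequence of open sets $\Om_j$ with $\overline{\Om_j}$ compact in $\Om$ and $\bigcup_j \Om_j = \Om$; for each $j$ fix $R_j>0$ so small that the open $2R_j$-neighborhood $U_j$ of $\Om_j$ has compact closure in $\Om$, and pick a cutoff $\eta_j\in C_c^\infty(\Om)$ with $0\le \eta_j\le 1$ and $\eta_j\equiv 1$ on $U_j$. The function $v_j:=\eta_j u$, extended by zero to $\R^d$, then lies in $\BV(\R^d)$.

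Splitting $M_\Om u=\max\{M_\Om^{R_j}u,M_{\Om,R_j}u\}$, I would first dispatch the small-radius piece. If $x\in\Om_j$ and $B(z,r)\subset \Om$ is a ball containing $x$ with $r<R_j$, then $B(z,r)\subset B(x,2R_j)\subset U_j$, on which $\eta_j\equiv 1$, so
\[
\vint{B(z,r)}|u|\,dy=\vint{B(z,r)}|v_j|\,dy\le Mv_j(x).
\]
Hence $M_\Om^{R_j}u\le Mv_j$ pointwise on $\Om_j$, and Proposition \ref{prop:weak type estimate} applied to $v_j$ gives $\capa_1(\{Mv_j>t\})\le C_1|Dv_j|(\R^d)/t\to 0$ as $t\to\infty$. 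Therefore $\capa_1(\{Mv_j=\infty\})=0$, and consequently $\capa_1(\Om_j\cap \{M_\Om^{R_j}u=\infty\})=0$. Countable subadditivity of $\capa_1$ then handles the small-radius contribution globally.

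The remaining step is to control the large-radius part $M_{\Om,R_j}u$ on $\Om_j$. Any ball $B(z,r)\subset\Om$ with $r\ge R_j$ has $\mathcal L^d(B(z,r))\ge \mathcal L^d(B(0,R_j))$, so
\[
M_{\Om,R_j}u(x)\le \frac{1}{\mathcal L^d(B(0,R_j))}\sup_{B(z,r)\subset\Om,\,x\in B(z,r)}\int_{B(z,r)}|u|\,dy.
\]
Verifying that the right-hand side is finite under only $u\in\BV_{\loc}(\Om)$ is the main technical obstacle, since $B(z,r)\subset\Om$ does not force $\overline{B}(z,r)\subset\Om$. I expect one exploits Lemma \ref{lem:point in closure of ball} to approximate by balls whose closures sit compactly inside $\Om$, on which local integrability applies, and then passes to a limit. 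Once this finiteness of $M_{\Om,R_j}u$ on $\Om_j$ is in hand, combining with the small-radius bound and summing over $j$ via countable subadditivity of $\capa_1$ completes the proof.
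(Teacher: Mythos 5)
Your treatment of the small-radius part is essentially the paper's own argument: exhaust $\Om$ by sets $\Om_j$ compactly contained in $\Om$, multiply by a cutoff equal to $1$ on a neighborhood of $\Om_j$ so that $M_{\Om}^{R_j}u\le M(\eta_j u)$ on $\Om_j$, apply Proposition \ref{prop:weak type estimate} to $\eta_j u\in\BV(\R^d)$ and let $t\to\infty$, and finish with countable subadditivity of $\capa_1$. That half is correct and matches the paper.

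The genuine gap is exactly where you flag it: the finiteness of $M_{\Om,R_j}u$ on $\Om_j$. The route you propose --- approximating an admissible ball by balls whose closures are compactly contained in $\Om$ and invoking local integrability --- cannot be made to work, because local integrability gives no uniform control as the approximating balls degenerate to one touching $\partial\Om$; in fact the assertion fails under the hypothesis $u\in\BV_{\loc}(\Om)$ alone. Take $\Om=B(0,1)\subset\R^d$ and $u(x)=(1-|x|)^{-d-2}$: this $u$ is smooth in $\Om$, hence in $\BV_{\loc}(\Om)$, but $\int_{B(e_1/2,\,1/2)}u\,dy=\infty$, so $M_{\Om}u\equiv\infty$ on the whole ball $B(e_1/2,1/2)$, a set of positive capacity. (Similarly $u(x)=x_1$ shows that $\BV_{\loc}(\R^d)$ alone does not suffice in the global case.) The paper closes this step by writing ``since $u\in L^1(\Om)$, clearly $M_{\Om,2^{-2j}}u(x)<\infty$ for every $x\in\Om$'' --- that is, it silently uses global integrability, under which the large-radius part is trivially bounded by $\Vert u\Vert_{L^1(\Om)}/\mathcal L^d(B(0,R_j))$. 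So the way to complete your proof is to add the hypothesis $u\in L^1(\Om)$ (which is how the proposition is actually invoked in Theorem \ref{thm:quasicontinuity of noncentered maximal function}), not to look for an argument from $\BV_{\loc}$ alone; Lemma \ref{lem:point in closure of ball} will not rescue the statement as literally printed.
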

\begin{proof}
Consider the open sets
\[
\Om_j:=\{x\in\Om\colon \dist(x,\R^d\setminus \Om)>2^{-j}\},\quad j\in\N.
\]
Now $\bigcup_{j=1}^{\infty}\Om_j=\Om$. Choose
cutoff functions $\eta_j\in C_c^{\infty}(\Om)$ with $0\le\eta_j\le 1$
in $\R^d$ and $\eta_j=1$ in $\Om_j$.
Now $\eta_j u\in\BV(\R^d)$ and so by Proposition \ref{prop:weak type estimate},
\begin{align*}
\capa_1(\{x\in \Om_j\colon M_{\Om}^{2^{-2j}}u(x)=\infty\})
\le \capa_1(\{x\in \Om_j\colon M(\eta_{j+1} u)(x)=\infty\})
=0.
\end{align*}
On the other hand, since $u\in L^1(\Om)$, clearly $M_{\Om,2^{-2j}}u(x)<\infty$ for every $x\in \Om$.
In total $M_{\Om}u(x)=\max\{M_{\Om}^{2^{-2j}}u(x),M_{\Om,2^{-2j}}u(x)\}<\infty$
for $\capa_1$-almost every $x\in \Om_j$.
Since $\bigcup_{j=1}^{\infty}\Om_j=\Om$, we obtain the result.
\end{proof}

The following fact is well known; for a proof covering the case
$u\in L^1(\R^d)$ see e.g. \cite[Proposition 3.2]{Lui},
while the case $u\in L^{\infty}(\R^d)$ follows by a slight modification.
This result does not necessarily hold for $M_{\Om}u$ in an open set $\Om$, which is why we formulate
some of the main results of this paper only in the global case $\Om=\R^d$.

\begin{proposition}\label{prop:Lipschitz continuity}
Let $u\in L^1(\R^d)$ (resp. $u\in L^{\infty}(\R^d)$), and let $R>0$. Then $M_{R} u$
is Lipschitz with constant depending only on $d$, $R$, and $\Vert u\Vert_{L^1(\R^d)}$
(resp. $\Vert u\Vert_{L^{\infty}(\R^d)}$).
\end{proposition}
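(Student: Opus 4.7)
The plan is to exploit the radius lower bound $r \ge R$ in the definition of $M_{R}u$: balls with large radius are robust to small perturbations of the point, and their averages have a trivial a priori bound.

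Fix $x, y \in \R^d$ with $h\coloneqq|x-y|$; I will assume $0<h\le R$ (for larger $h$, the Lipschitz estimate follows immediately from the uniform $L^\infty$ bound on $M_{R} u$, namely $M_R u \le \|u\|_{L^1}/(\omega_d R^d)$ in the $L^1$ case and $M_R u \le \|u\|_{L^\infty}$ in the $L^\infty$ case). Given any ball $B(z,r) \subset \R^d$ with $r \ge R$ and $x \in B(z,r)$, observe that
\[
|y-z| \le |y-x| + |x-z| < h + r,
\]
so $y \in B(z,r+h)$, and this enlarged ball has radius $r+h \ge R$. Comparing the two averages,
\[
\vint{B(z,r)}|u|\,dy = \frac{1}{\omega_d r^d}\int_{B(z,r)}|u|\,dy
\le \Bigl(\frac{r+h}{r}\Bigr)^d \vint{B(z,r+h)}|u|\,dy
\le \Bigl(1+\tfrac{h}{R}\Bigr)^d M_{R} u(y),
\]
where in the last inequality I used $r \ge R$ and that $B(z,r+h)$ is an admissible ball for $M_{R} u(y)$.

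Taking the supremum over admissible $B(z,r)$ at $x$ yields $M_{R} u(x) \le (1+h/R)^d M_{R} u(y)$, hence
\[
M_{R} u(x) - M_{R} u(y) \le \bigl((1+h/R)^d - 1\bigr) M_{R} u(y) \le \frac{C_d h}{R} \cdot M_{R} u(y),
\]
where $C_d$ depends only on $d$ (using $h\le R$ to linearize the binomial). In the $L^1$ case the trivial bound $M_{R} u(y) \le \|u\|_{L^1(\R^d)}/(\omega_d R^d)$ gives
\[
M_{R} u(x) - M_{R} u(y) \le \frac{C_d \Vert u\Vert_{L^1(\R^d)}}{\omega_d R^{d+1}}\,h,
\]
and in the $L^\infty$ case, $M_{R} u(y) \le \Vert u\Vert_{L^\infty(\R^d)}$ yields a Lipschitz constant of order $C_d \Vert u\Vert_{L^\infty(\R^d)}/R$. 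Swapping the roles of $x$ and $y$ finishes the estimate.

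There is no real obstacle: the argument is a standard comparison of ball averages, and the only care needed is to note that enlarging the radius by $h$ keeps the new ball admissible for $M_{R}$ at $y$ and that the volume loss contributes only a factor $(1+h/R)^d$. The splitting between small $h$ (use the dilation argument) and large $h$ (use the global sup bound) is the only minor bookkeeping point.
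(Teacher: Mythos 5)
Your proof is correct. The paper itself gives no argument for this proposition---it only cites \cite[Proposition 3.2]{Lui} for the $L^1$ case and notes that the $L^\infty$ case is a slight modification---and your ball-enlargement comparison (enlarge $B(z,r)$ to $B(z,r+h)$, bound the volume ratio by $(1+h/R)^d$ using $r\ge R$, and absorb the remainder with the a priori bound $M_R u\le \Vert u\Vert_{L^1}/(\omega_d R^d)$, resp. $\Vert u\Vert_{L^\infty}$) is exactly the standard argument behind that citation, with the large-$h$ case correctly dispatched by the uniform sup bound.
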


The following result proven in \cite[Lemma 3.5]{L-SA} is our key tool for handling
the exceptional set of quasi (semi)continuity.

\begin{lemma}\label{lem:uniform convergence of G}
	Let $G\subset \R^d$ and $\eps>0$. Then there exists an open set
	$U\supset G$ with $\capa_1(U)\le C_2\capa_1(G)+\eps$ such that
	\[
	\frac{\mathcal L^d(B(x,r)\cap G)}{\mathcal L^d(B(x,r))}\to 0\quad\textrm{as }r\to 0
	\]
	uniformly for $x\in \R^d\setminus U$. Here $C_2$ depends only on $d$.
\end{lemma}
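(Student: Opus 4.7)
The plan is to produce the set $U$ by combining a Sobolev test function witnessing the capacity of $G$, the weak-type bound of Proposition~\ref{prop:weak type estimate}, and a multi-scale covering argument that quantifies the rate of density convergence.

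First I would reduce to the case $\capa_1(G) < \infty$, since otherwise $U := \R^d$ works trivially. By the definition of the Sobolev $1$-capacity, fix a nonnegative $v \in W^{1,1}(\R^d)$ with $v \ge 1$ in an open neighborhood of $G$ and $\|v\|_{W^{1,1}(\R^d)} \le \capa_1(G) + \eps/(4 C_1)$. Since $\mathbbm{1}_G \le v$ a.e., one has pointwise
\[
\frac{\mathcal L^d(B(x,r) \cap G)}{\mathcal L^d(B(x,r))} \le \vint{B(x,r)} v\,dy \le Mv(x).
\]
Setting $U_0 := \{Mv > 1/2\}$, Proposition~\ref{prop:weak type estimate} yields $\capa_1(U_0) \le 2 C_1 \capa_1(G) + \eps/2$, and lower semicontinuity of $Mv$ together with the bound $Mv \ge v \ge 1$ near $G$ give that $U_0$ is an open neighborhood of $G$. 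This takes care of the capacity bound and density bound up to a constant; the difficulty remaining is that outside $U_0$ we only know the density is bounded by $1/2$, not that it tends to $0$.

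Second, I would upgrade this bound to uniform convergence. For each $k \in \N$ I would pick a radius $r_k \downarrow 0$ so small that the open superlevel set of the truncated maximal function
\[
A_k := \Bigl\{x \in \R^d : \sup_{x \in B(z,r),\, r < r_k} \vint{B(z,r)} v\,dy > 2^{-k}\Bigr\}
\]
has capacity at most $\eps 2^{-k-2}$; the existence of such $r_k$ rests on the fact that, as $r_k \to 0$, the truncated maximal function converges $\capa_1$-quasi-everywhere to the Sobolev Lebesgue representative $\tilde v$, together with a capacitary outer-regularity argument for the decreasing family of open level sets. Setting $U := U_0 \cup \bigcup_k A_k$, the bound $\capa_1(U) \le 2C_1\capa_1(G) + \eps/2 + \sum_k \eps 2^{-k-2} \le 2 C_1 \capa_1(G) + \eps$ gives the desired capacity estimate with $C_2 = 2 C_1$. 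For $x \in \R^d \setminus U$ and any $r < r_k$ we have $\vint{B(x,r)} v\,dy \le 2^{-k}$, hence $\mathcal L^d(B(x,r) \cap G)/\mathcal L^d(B(x,r)) \le 2^{-k}$, giving uniform convergence to $0$.

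The main obstacle is exactly the capacity estimate on $A_k$: the naive weak-type bound $\capa_1(\{Mv > 2^{-k}\}) \le C_1 2^k \|v\|_{W^{1,1}}$ blows up and, summed over $k$, is useless. The point is that by truncating to scales $r < r_k$ one captures only points where the small-scale averages of $v$ are large, and these form essentially the level set $\{\tilde v > 2^{-k}\}$; to make $\capa_1(A_k)$ genuinely decay one must exploit that choosing $r_k$ very small forces $A_k$ to shrink toward this level set of $\tilde v$, combined with a quantitative covering of $\{\tilde v > 2^{-k}\}$ by small balls whose $(d-1)$-content is controlled by $\|v\|_{W^{1,1}}$ rather than by $2^k \|v\|_{W^{1,1}}$. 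This is essentially a capacitary Egorov-type step and is the only part of the argument that is not a direct consequence of the maximal-function weak-type inequality.
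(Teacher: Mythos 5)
The paper does not actually prove this lemma; it quotes it from \cite[Lemma 3.5]{L-SA}, so your attempt has to be judged on its own merits. Your first step (the admissible function $v$, the pointwise bound $\mathcal L^d(B(x,r)\cap G)/\mathcal L^d(B(x,r))\le \vint{B(x,r)}v\,dy$, and the open set $U_0=\{Mv>1/2\}$ controlled via Proposition \ref{prop:weak type estimate}) is fine. The second step, however, has a fatal gap. You want to choose $r_k$ so small that $A_k$ has capacity at most $\eps 2^{-k-2}$. This is impossible: at every Lebesgue point $x$ of $v$ the non-centered averages $\vint{B(z,r)}v\,dy$ over balls $B(z,r)\ni x$ converge to $\widetilde v(x)$ as $r\to 0$, so $A_k$ contains $\{\widetilde v>2^{-k}\}$ up to a $\capa_1$-null set \emph{for every choice of} $r_k$. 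Since $v\ge 1$ a.e.\ on an open neighborhood $V$ of $G$, we get $\capa_1(A_k)\ge\capa_1(\{\widetilde v>2^{-k}\})\ge \capa_1(V)\ge\capa_1(G)$, which does not tend to $0$ in $k$; moreover $\bigcup_k A_k$ essentially contains $\{\widetilde v>0\}$, whose capacity is not controlled by $\capa_1(G)$ at all (it can be infinite). The root of the problem is that the quantity you try to drive to zero, $\vint{B(x,r)}v\,dy$, converges to $\widetilde v(x)$, not to $0$. What does tend to $0$ quasi-everywhere outside $U_0$ is $\vint{B(x,r)}(v-\tfrac12)_+\,dy\le\vint{B(x,r)}|v-\widetilde v(x)|\,dy$, and since $G\subset\{v\ge 1\}$ this still dominates the density of $G$ up to a factor $2$; it is this level-truncated quantity that must be made uniformly small.

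Even after that correction, your proposed mechanism for producing the radii $r_k$ --- ``capacitary outer-regularity for the decreasing family of open level sets'' --- does not work: $\capa_1$ is continuous along decreasing sequences of \emph{compact} sets, not of open ones, so knowing that the intersection over all truncation radii of the (corrected) sets $A_k$ is $\capa_1$-null does not yield that their capacities tend to $0$. The actual proof, both in \cite[Lemma 3.5]{L-SA} and in the closely parallel proof of Lemma \ref{lem:uniform convergence of u} in this paper, avoids any such limiting argument: one covers the bad set by balls $B(x,r_x)$ on which the relevant average exceeds a level $\beta_j$, converts this via H\"older and the Sobolev embedding into a lower bound for an $L^{d/(d-1)}$-quantity in terms of $\beta_j r_x^{d-1}$, and then combines the $5$-covering theorem with $\capa_1(B(x,r))\le C_0 r^{d-1}$ and the absolute continuity of the integral to bound $\sum_i r_i^{d-1}$, hence the capacity of the bad set, directly. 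Some quantitative covering input of this kind (equivalently, a boxing-type inequality) is indispensable here and is exactly the ingredient your argument is missing.
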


We will need the following version.

\begin{lemma}\label{lem:uniform convergence of u}
	Let $u\in\BV_{\loc}(\Om)$, let $G\subset \Om$,
	and let $\eps>0$. Then there exists an open set
	$U\supset G$ such that $\capa_1(U)\le C_2\capa_1(G)+\eps$ and
	\[
	\frac{1}{\mathcal L^d(B(x,r))}\int_{B(x,r)\cap G}|u|\,dy
	\to 0\quad\textrm{as }r\to 0
	\]
	locally uniformly for $x\in \Om\setminus U$. Here $C_2$ is the same constant
	as in Lemma \ref{lem:uniform convergence of G}.
\end{lemma}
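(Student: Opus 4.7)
The approach is to split $|u|\le N+(|u|-N)_+$ and handle the two pieces separately: the bounded part is controlled by Lemma \ref{lem:uniform convergence of G} applied to $G$, and the tail is controlled via the weak-type estimate in Proposition \ref{prop:weak type estimate} applied to suitably cut-off truncations of $|u|$. Since $u$ is only locally BV, a compact exhaustion of $\Om$ will be needed.

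First I apply Lemma \ref{lem:uniform convergence of G} to $G$ with parameter $\eps/2$, obtaining an open $U_0\supset G$ with $\capa_1(U_0)\le C_2\capa_1(G)+\eps/2$ such that $\mathcal L^d(B(x,r)\cap G)/\mathcal L^d(B(x,r))\to 0$ uniformly for $x\in\R^d\setminus U_0$. Next I pick an exhaustion $V_1\subset V_2\subset\cdots$ of $\Om$ by relatively compact open subsets with $\bigcup_j V_j=\Om$, and cutoffs $\eta_j\in C_c^\infty(\Om)$ satisfying $\eta_j\equiv 1$ on $V_j$. For each $j\in\N$ and each $N>0$, the function $v_{j,N}:=\eta_j(|u|-N)_+$ lies in $\BV(\R^d)$ (product of a smooth compactly supported cutoff with a locally BV function), and by the product rule, standard truncation estimates for BV functions, and dominated convergence, $|Dv_{j,N}|(\R^d)\to 0$ as $N\to\infty$. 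For each $k\in\N$ I choose $N_{j,k}$ so large that $C_1\cdot 2^k\,|Dv_{j,N_{j,k}}|(\R^d)<\eps\cdot 2^{-j-k-2}$, where $C_1$ is the constant from Proposition \ref{prop:weak type estimate}, and set
\[
W_{j,k}:=\{x\in\R^d\colon Mv_{j,N_{j,k}}(x)>2^{-k}\},
\]
which is open by lower semicontinuity of $M$. Proposition \ref{prop:weak type estimate} gives $\capa_1(W_{j,k})<\eps\cdot 2^{-j-k-2}$, so $W:=\bigcup_{j,k}W_{j,k}$ has $\capa_1(W)<\eps/2$ by countable subadditivity, and $U:=U_0\cup W$ satisfies the required bound $\capa_1(U)\le C_2\capa_1(G)+\eps$.

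To verify the local uniform convergence, fix a compact set $K\subset\Om\setminus U$. There is some $j$ with $K\subset V_j$ and $r_0:=\dist(K,\R^d\setminus V_j)>0$. Given $\tau>0$, I pick $k$ with $2^{-k}<\tau/2$, and then, by Lemma \ref{lem:uniform convergence of G}, an $r_1>0$ such that $\mathcal L^d(B(x,r)\cap G)/\mathcal L^d(B(x,r))<\tau/(2N_{j,k})$ for all $x\in\R^d\setminus U_0$ and all $r<r_1$. For $x\in K$ and $r<\min\{r_0,r_1\}$ the ball $B(x,r)$ lies in $V_j$ where $\eta_j\equiv 1$, and $x\notin W_{j,k}$; the decomposition $|u|\le N_{j,k}+(|u|-N_{j,k})_+$ therefore yields
\[
\frac{1}{\mathcal L^d(B(x,r))}\int_{B(x,r)\cap G}|u|\,dy\le N_{j,k}\cdot\frac{\mathcal L^d(B(x,r)\cap G)}{\mathcal L^d(B(x,r))}+Mv_{j,N_{j,k}}(x)<\tau,
\]
establishing the desired uniform convergence on $K$.

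The main obstacle is constructing a \emph{single} exceptional open set $U$ that simultaneously handles every compact subset of $\Om\setminus U$ and every precision level $\tau$. This is resolved by the doubly indexed union of the $W_{j,k}$: the index $j$ follows the compact exhaustion of $\Om$ so that once $x$ lies in a given $K$ a fixed cutoff $\eta_j$ can be taken to equal $1$ on all sufficiently small balls around $x$, while the index $k$ provides progressively finer tail thresholds $2^{-k}$. Geometric summability in both indices keeps $\capa_1(W)$ under control, while the freedom to choose $N_{j,k}$ as large as required lets the weak-type estimate absorb whatever truncation level the tail bound demands.
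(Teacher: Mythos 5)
Your proof is correct, but it follows a genuinely different route from the paper's. You decompose $|u|\le N+(|u|-N)_+$ and control the tail via the capacitary weak-type estimate (Proposition \ref{prop:weak type estimate}) applied to the globally BV functions $\eta_j(|u|-N)_+$, removing the doubly indexed family of bad sets $W_{j,k}$; the paper instead runs a direct covering argument, using the Sobolev embedding $\BV_{\loc}\subset L^{d/(d-1)}_{\loc}$, a carefully chosen sequence of truncation levels $\alpha_j$ and thresholds $\beta_j$, the $5$-covering theorem, H\"older's inequality, and the ball-capacity bound \eqref{eq:capacity ball estimate} to estimate $\capa_1(A_j)$ by hand. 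In effect you outsource the covering argument to the cited weak-type estimate, which makes your write-up shorter and arguably cleaner given the tools already assembled in Section \ref{sec:preliminary results}, at the cost of relying on an external black box where the paper is essentially self-contained. The only step you gloss over is the claim $|D(\eta_j(|u|-N)_+)|(\R^d)\to 0$ as $N\to\infty$: the cutoff term $\int_{\supp\eta_j}(|u|-N)_+\,|\nabla\eta_j|\,dy$ vanishes by dominated convergence, and the term $|D(|u|-N)_+|(\supp\eta_j)=\int_N^\infty|D\mathbbm{1}_{\{|u|>s\}}|(\supp\eta_j)\,ds$ vanishes as the tail of a convergent integral by the coarea formula; this is indeed standard and correct, but it is the one place where a referee would want the coarea formula named explicitly. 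All the bookkeeping (openness of $U$, the capacity budget $C_2\capa_1(G)+\eps$ with the same constant $C_2$, and the passage from compact $K\subset\Om\setminus U$ to a single radius threshold) checks out.
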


\begin{proof}
We  have $|u|\in \BV_{\loc}(\Om)$, and so we can assume that
$u$ is nonnegative.
By Lemma \ref{lem:uniform convergence of G},
we find an open set $W\supset G$ such that
$\capa_1(W)\le C_2\capa_1(G)+\eps/2$ and
\begin{equation}\label{eq:condition on G}
\frac{\mathcal L^d(B(x,r)\cap G)}{\mathcal L^d(B(x,r))}
\to 0\quad\textrm{as }r\to 0
\end{equation}
uniformly for $x\in \R^d\setminus W$.
Let
\[
\Om_j:=\{x\in B(0,j)\colon \dist(x,\R^d\setminus \Om)>1/j\},\quad j\in\N,
\]
so that $\Om=\bigcup_{j=1}^{\infty}\Om_j$.
Note that we have by the Sobolev embedding
\[
u\in\BV_{\loc}(\Om)\subset L_{\loc}^{d/(d-1)}(\Om)
\subset L^{d/(d-1)}(\Om_{j})\quad\textrm{for every }j\in\N.
\]
Choose numbers $\alpha_j>0$, $j=0,1,\ldots$, such that $\alpha_{j+1}\ge 2\alpha_j$ and
\[
\sum_{j=1}^{\infty}\left(\int_{\Om_{j+1}}(u-\alpha_{j-1})_+^{d/(d-1)}\,dy
\right)^{(d-1)/d}<\frac{\eps}{5^{d} C_0}.
\]
Next take a sequence $\beta_j\searrow 0$, $\beta_j\le 1$, such that still
\begin{equation}\label{eq:beta sum condition}
\sum_{j=1}^{\infty}\frac{1}{\beta_j}
\left(\int_{\Om_{j+1}}(u-\alpha_{j-1})_+^{d/(d-1)}\,dy
\right)^{(d-1)/d}<\frac{\eps}{5^{d} C_0}.
\end{equation}
Define the sets
\[
E_j:=\{x\in \Om\colon u(x)\ge \alpha_j\}.
\]
Then define the sets
\begin{align*}
&A_j:=\Bigg\{x\in \Om_{j}\colon
\frac{1}{\mathcal L^d(B(x,r))}\int_{B(x,r)\cap E_j}u\,dy>\beta_j\\
&\qquad\qquad\qquad\qquad\textrm{ for some }0<r\le 1/5
\textrm{ with }B(x,r)\subset \Om_{j+1}\Bigg\}.
\end{align*}
Consider $j\in\N$ and $x\in A_j$.
For some $0<r_x\le 1/5$, we have $B(x,r_x)\subset\Om_{j+1}$ and
\[
\frac{1}{\mathcal L^d(B(x,r_x))}\int_{B(x,r_x)}(u-\alpha_{j-1})_+\,dy
\ge \frac{1}{2}\frac{1}{\mathcal L^d(B(x,r_x))}\int_{B(x,r_x)\cap E_j}u\,dy
>\frac{\beta_j}{2},
\]
and so by H\"older's inequality
\[
\frac{1}{r_x^{d-1}}\left(\int_{B(x,r_x)}(u-\alpha_{j-1})_+^{d/(d-1)}\,dy\right)^{(d-1)/d}
>\frac{\beta_j}{2}.
\]
Now $\{B(x,r_x)\}_{x\in A_j}$ is a covering of $A_j$.
By the $5$-covering theorem, we find a countable collection of pairwise
disjoint balls $\{B(x_k,r_k)\}_{k=1}^{\infty}$ such that
$A_j\subset \bigcup_{k=1}^{\infty}B(x_k,5r_k)$.
Now we have by \eqref{eq:capacity ball estimate}, and by using
the triangle inequality for the $L^{q/(q-1)}$-norm,
\begin{align*}
\capa_1(A_j)
&\le \sum_{k=1}^{\infty}\capa_1(B(x_k,5r_k))\\
&\le 5^{d-1}C_0\sum_{k=1}^{\infty}r_k^{d-1}\\
&\le \frac{2\times 5^{d-1}C_0}{\beta_j}\sum_{k=1}^{\infty}
\left(\int_{B(x_k,r_k)}(u-\alpha_{j-1})_+^{d/(d-1)}\,dy\right)^{(d-1)/d}\\
&\le \frac{2\times 5^{d-1}C_0}{\beta_j}
\left(\int_{\Om_{j+1}}(u-\alpha_{j-1})_+^{d/(d-1)}\,dy\right)^{(d-1)/d}.
\end{align*}
Now recalling \eqref{eq:beta sum condition}, we get
\[
\capa_1\left(W\cup\bigcup_{j=1}^{\infty}A_j\right)
\le \capa_1(W)+\sum_{j=1}^{\infty}\capa_1(A_j)
<C_2\capa_1(G)+\frac{\eps}{2}+\frac{\eps}{2}.
\]
Finally, take an open set
\[
U\supset W\cup\bigcup_{j=1}^{\infty}A_j
\]
with $\capa_1(U)<C_2\capa_1(G)+\eps$.

Fix $\delta>0$.
Take $j_0\in\N$ sufficiently large that
$j_0\ge 1/\delta$ and $\beta_{j_0}<\delta/2$.
Using \eqref{eq:condition on G}, take $0<R\le 1/5$ such that
\[
\frac{\mathcal L^d(G\cap B(x,r))}{\mathcal L^d(B(x,r))}<\frac{\delta}{2\alpha_{j_0}}
\quad\textrm{for all }x\in \R^d\setminus U\textrm{ and }
0<r\le R.
\]
Thus for all
\[
x\in \{y\in B(0,1/\delta)\colon \dist(y,\R^d\setminus \Om)>\delta\}
\setminus U\subset \Om_{j_0}\setminus U
\]
and $0<r\le \min\{R,\dist(\Om_{j_0},\R^d\setminus \Om_{j_0+1})\}$,
we have
\begin{align*}
&\frac{1}{\mathcal L^d(B(x,r))}\int_{G\cap B(x,r)}u\,dy\\
&\quad\le \frac{1}{\mathcal L^d(B(x,r))}\int_{G\cap B(x,r)\setminus E_{j_0}}u\,dy
+\frac{1}{\mathcal L^d(B(x,r))}\int_{B(x,r)\cap E_{j_0}}u\,dy\\
&\quad<\frac{\delta}{2\alpha_{j_0}}\alpha_{j_0}+\beta_{j_0}
\le \frac{\delta}{2}+\frac{\delta}{2}
=\delta.
\end{align*}
Since $\delta>0$ was arbitrary, this proves the local uniform convergence in
the set $\Om\setminus U$.
\end{proof}

\section{Quasicontinuity}\label{sec:quasicontinuity}

In this section we prove that the non-centered maximal function of
a BV function is $1$-quasi\-continuous.
As before, $\Om\subset \R^d$ is an arbitrary nonempty open set.

The following theorem is Theorem \ref{thm:quasicontinuity of noncentered maximal function intro}
in a slightly more general form.

\begin{theorem}\label{thm:quasicontinuity of noncentered maximal function}
	Let $u\in\BV_{\loc}(\Om)\cap L^{1}(\Om)$ or $u\in \BV_{\loc}(\R^d)$.
	Then $M_{\Om} u$ is $1$-quasicontinuous.
\end{theorem}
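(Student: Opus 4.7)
The plan is to exploit the fact that $M_{\Om}u$ is automatically lower semicontinuous (each ball $B(z,r)\subset\Om$ contributes the lower semicontinuous summand $\vint{B(z,r)}|u|\cdot\mathbbm{1}_{B(z,r)}$ to the supremum), so it suffices, given $\eps>0$, to produce an open set $E\subset\Om$ with $\capa_1(E)$ at most a constant multiple of $\eps$ such that $M_{\Om}u|_{\Om\setminus E}$ is upper semicontinuous. Since $|u|\in\BV_{\loc}(\Om)$ and $M_{\Om}u=M_{\Om}|u|$, I assume $u\ge 0$.

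I would build $E$ by amalgamating four exceptional sets. First, an open $G_1\supset S_u\setminus J_u$ with $\capa_1(G_1)<\eps$ — available by \eqref{eq:null sets of capa and Hausdorff} since $\mathcal H^{d-1}(S_u\setminus J_u)=0$ — ensuring $M_{\Om}u\ge u^{\vee}$ on $\Om\setminus G_1$ via Proposition \ref{prop:maximal function and upper representative}. Second, an open $G_2$ with $\capa_1(G_2)<\eps$ obtained from Theorem \ref{thm:quasisemicontinuity} so that $u^{\vee}|_{\Om\setminus G_2}$ is finite and upper semicontinuous. Third, an open $G_3\supset\{M_{\Om}u=\infty\}$ with $\capa_1(G_3)<\eps$ from Proposition \ref{prop:weak type estimate inifinity}. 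Finally, apply Lemma \ref{lem:uniform convergence of u} to $G:=G_1\cup G_2\cup G_3$ to obtain an open $E\supset G$ with $\capa_1(E)\le C_2\capa_1(G)+\eps$ such that
\[
\frac{1}{\mathcal L^d(B(x,r))}\int_{B(x,r)\cap G}u\,dy\to 0\quad\text{as }r\to 0,
\]
locally uniformly for $x\in\Om\setminus E$.

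To verify upper semicontinuity on $\Om\setminus E$, I pick $x_0\in\Om\setminus E$ and a sequence $x_n\to x_0$ in $\Om\setminus E$, and by Lemma \ref{lem:point in closure of ball} select balls $B(z_n,r_n)\subset\Om$ with $x_n\in\overline{B}(z_n,r_n)$ and $\vint{B(z_n,r_n)}u\ge M_{\Om}u(x_n)-1/n$. After passing to a subsequence realising $\limsup M_{\Om}u(x_n)$, two regimes occur. In the regime $\liminf r_n>0$, a compactness argument (using $|z_n-x_n|\le r_n$) extracts a subsequence $(z_n,r_n)\to(z,r)$ with $B(z,r)\subset\Om$ and $x_0\in\overline{B}(z,r)$, and dominated convergence yields $\vint{B(z_n,r_n)}u\to\vint{B(z,r)}u\le M_{\Om}u(x_0)$ via Lemma \ref{lem:point in closure of ball}; the escape $r_n\to\infty$ is absorbed by the $L^1(\Om)$ hypothesis (and by a cutoff when only $u\in\BV_{\loc}(\R^d)$ is available). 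In the regime $r_n\to 0$, the closed balls $\overline{B}(z_n,r_n)$ shrink to $\{x_0\}$, and I split
\[
\vint{B(z_n,r_n)}u=\frac{1}{\mathcal L^d(B(z_n,r_n))}\Bigl(\int_{B(z_n,r_n)\setminus G}u\,dy+\int_{B(z_n,r_n)\cap G}u\,dy\Bigr);
\]
the second term vanishes uniformly by Lemma \ref{lem:uniform convergence of u}, while on $B(z_n,r_n)\setminus G\subset\Om\setminus G_2$ the a.e.\ identity $u=u^{\vee}$ combined with the upper semicontinuity of $u^{\vee}$ at $x_0$ (from Theorem \ref{thm:quasisemicontinuity}) forces $u\le u^{\vee}(x_0)+\delta$ for large $n$, producing $\limsup\vint{B(z_n,r_n)}u\le u^{\vee}(x_0)\le M_{\Om}u(x_0)$, where the last inequality uses $x_0\notin G_1$ and Proposition \ref{prop:maximal function and upper representative}.

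The main obstacle is precisely this $r_n\to 0$ step: converting the pointwise upper semicontinuity of $u^{\vee}$ into an integral bound on shrinking balls that may straddle the exceptional set $G$, which is why Lemma \ref{lem:uniform convergence of u} is stated with $u$ in place of the bare characteristic function $\mathbbm{1}_{G}$ of Lemma \ref{lem:uniform convergence of G}. A secondary technicality is handling possibly unbounded $r_n$ in the $\BV_{\loc}(\R^d)$ case on unbounded $\Om$, which I anticipate addressing by a cutoff truncating $u$ to a sufficiently large bounded neighborhood of $x_0$ and then invoking the $L^1$ argument.
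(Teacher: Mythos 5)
Your proposal follows essentially the same route as the paper's proof: the same exceptional set (quasi-upper-semicontinuity of $u^{\vee}$ from Theorem \ref{thm:quasisemicontinuity}, the null sets $S_u\setminus J_u$ and $\{M_{\Om}u=\infty\}$, all enlarged via Lemma \ref{lem:uniform convergence of u}), the same reduction to upper semicontinuity outside that set via automatic lower semicontinuity, and the same trichotomy on the radii of near-optimal balls. The cases $r_n\to 0$ and $r_n\to r\in(0,\infty)$ are handled correctly and exactly as in the paper, including the key splitting of the average over $B(z_n,r_n)\cap G$ and $B(z_n,r_n)\setminus G$ and the use of Proposition \ref{prop:maximal function and upper representative} to close the loop with $u^{\vee}(x_0)\le M_{\Om}u(x_0)$.

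The one step that would fail is your treatment of the regime $r_n\to\infty$ under the hypothesis $u\in\BV_{\loc}(\R^d)$ without $u\in L^1(\R^d)$. Truncating $u$ to a bounded neighborhood of $x_0$ cannot work: the near-optimal balls $B(z_n,r_n)$ eventually leave every bounded set, so the averages of the truncated function over these balls bear no relation to $\vint{B(z_n,r_n)}u\,dy$ (take $u\equiv 1$: every average equals $1$, while truncation drives the averages to $0$). The correct argument is elementary and direct: since $x_n\to x_0$ and $x_n\in\overline{B}(z_n,r_n)$, we have $x_0\in B(z_n,r_n+1)$ for all large $n$, and since $\Om=\R^d$ every such ball is admissible, so
\[
Mu(x_0)\ \ge\ \limsup_{n\to\infty}\,\vint{B(z_n,r_n+1)}u\,dy\ \ge\ \limsup_{n\to\infty}\left(\frac{r_n}{r_n+1}\right)^d\vint{B(z_n,r_n)}u\,dy\ =\ \limsup_{n\to\infty}\,\vint{B(z_n,r_n)}u\,dy,
\]
which is precisely the paper's Case 3. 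With this replacement your proof is complete.
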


\begin{proof}
	First assume that $u\in\BV_{\loc}(\Om)\cap L^{1}(\Om)$.
	Then also $|u|\in \BV_{\loc}(\Om)\cap L^{1}(\Om)$, and so we can assume that $u$ is nonnegative.
	
	Fix $\eps>0$.
	By Theorem \ref{thm:quasisemicontinuity} we find
	an open set $G\subset \Om$ such that
	$\capa_1(G)<\eps/C_2$ and $u^{\vee}|_{\Om\setminus G}$ is upper semicontinuous.
	Since $\capa_1(S_u\setminus J_u)=\mathcal H^{d-1}(S_u\setminus J_u)=0$
	(recall \eqref{eq:null sets of capa and Hausdorff})
	and
	\[
	\capa_1(\{x\in\Om:\,M_{\Om} u(x)=\infty\})=0
	\]
	by Proposition \ref{prop:weak type estimate inifinity}, we can assume that
	$G\supset \{x\in\Om:\,M_{\Om} u(x)=\infty\}\cup (S_u\setminus J_u)$. 
	Then by Lemma \ref{lem:uniform convergence of u}, we can take an open set
	$U\supset G$ such that $\capa_1(U)<\eps$ and
	\begin{equation}\label{eq:uniform convergence G}
	\frac{1}{\mathcal L^d(B(x,r))}\int_{B(x,r)\cap G}u\,dy
	\to 0\quad\textrm{as }r\to 0
	\end{equation}
	locally uniformly for $x\in \Om\setminus U$.
	Since $M_{\Om}u|_{\Om\setminus U}$ is finite and lower semicontinuous, it is
	sufficient to prove upper semicontinuity.
	Fix $x\in\Om\setminus U$.
	Take a sequence $x_j\to x$, $x_j\in \Om\setminus U$, such that
	\[
	\lim_{j\to\infty}M_{\Om} u(x_j)=\limsup_{\Om\setminus U\ni y\to x}M_{\Om} u(y).
	\]
	(At this stage we cannot exclude the possibility that the $\limsup$ is $\infty$.)
	Now we only need to show that
	$M_{\Om} u(x)\ge \lim_{j\to\infty}M_{\Om} u(x_j)$.
	
	We find ``almost optimal'' balls $B(x_j^*,r_j)$ in the sense that 
	\begin{equation}\label{eq:choice of almost optimal balls}
	\lim_{j\to\infty}M_{\Om}u(x_j)=\lim_{j\to\infty}\,\vint{B(x_j^*,r_j)}u\,dy,
	\end{equation}
	with $x_j\in B(x_j^*,r_j)\subset\Om$.
	Since $u\in L^1(\Om)$, we can assume that the radii $r_j$ are uniformly bounded.
	Now we consider two cases.
	
	\textbf{Case 1.} Suppose that by passing to a subsequence (not relabeled),
	we have $r_j\to 0$. Fix $\delta>0$.
	By the upper semicontinuity of $u^{\vee}|_{\Om\setminus G}$, for some
	$r>0$ we have $B(x,r)\subset\Om$ and
	\begin{equation}\label{eq:using upper semicontinuity}
	u^{\vee}(x)\ge \sup_{B(x,r)\setminus G}u^{\vee}-\delta.
	\end{equation}
	Note also that for sufficiently large $j\in\N$, we have
	$B(x_j^*,r_j)\subset B(x,r)$.
	Thus, using Proposition \ref{prop:maximal function and upper representative}
	(recall that $G\supset S_u\setminus J_u$),
	we get for large $j\in\N$
	\begin{align*}
	M_{\Om} u(x)
	&\ge u^{\vee}(x)\\
	&\ge \sup_{B(x,r)\setminus G}u^{\vee}-\delta\quad\textrm{by }\eqref{eq:using upper semicontinuity}\\
	&\ge \frac{1}{\mathcal L^d(B(x_j^*,r_j))}\int_{B(x_j^*,r_j)\setminus G}u\,dy-\delta\quad\textrm{since }B(x_j^*,r_j)\subset B(x,r)\\
	&= \frac{1}{\mathcal L^d(B(x_j^*,r_j))}\int_{B(x_j^*,r_j)}u\,dy-
	\frac{1}{\mathcal L^d(B(x_j^*,r_j))}\int_{B(x_j^*,r_j)\cap G}u\,dy-\delta\\
		&\ge \vint{B(x_j^*,r_j)}u\,dy-
	\frac{2^d}{\mathcal L^d(B(x_j,2r_j))}\int_{B(x_j,2r_j)\cap G}u\,dy-\delta.
	\end{align*}
	Now by \eqref{eq:uniform convergence G} and \eqref{eq:choice of almost optimal balls},
	we get
	\[
	M_{\Om} u(x)\ge \lim_{j\to\infty}M_{\Om} u(x_j)-\delta,
	\]
	so that letting $\delta\to 0$, we obtain the desired inequality.
	
	\textbf{Case 2.}
	The other alternative is that passing to a subsequence (not relabeled),
	we have $r_j\to r\in (0,\infty)$.
	Passing to a further subsequence (not relabeled),
	the vectors $x_j^*-x_j$ (since they have length at most $r_j)$
	converge to some $v\in\R^d$.
	Now for $x^*:=x+v$ we have
	$\mathbbm{1}_{B(x_j^*,r_j)}\to \mathbbm{1}_{B(x^*,r)}$ in $L^1(\R^d)$, with
	$x\in \overline{B}(x^*,r)$ and $B(x^*,r)\subset \Om$.
	In this case we have by Lemma \ref{lem:point in closure of ball} that
	\[
	M_{\Om} u(x)\ge \vint{B(x^*,r)}u\,dy
	=\lim_{j\to\infty}\,\vint{B(x_j^*,r_j)}u\,dy
	=\lim_{j\to\infty}M_{\Om} u(x_j).
	\]
	This completes the proof in the case $u\in\BV_{\loc}(\Om)\cap L^{1}(\Om)$.
	
	Now consider the case $u\in \BV_{\loc}(\R^d)$. The proof is the same,
	except that now we need to consider a third case.
	
	\textbf{Case 3.} Suppose that passing to a subsequence (not relabeled),
	we have $r_j\to \infty$.
	Then since $B(x_j^*,r_j+1)\ni x$ for all large $j\in\N$, we get
	\[
	M u(x)\ge \limsup_{j\to\infty}\vint{B(x_j^*,r_j+1)}u\,dy
	\ge \limsup_{j\to\infty}\vint{B(x_j^*,r_j)}u\,dy
	=\lim_{j\to\infty}M u(x_j).
	\]
	This completes the proof.	
\end{proof}

Aldaz and P\'erez L\'azaro \cite{APL09}
showed that the maximal function of a
\emph{block-decreasing} BV function is continuous at every point outside a
$\mathcal H^{d-1}$-negligible set. Such a continuity property is somewhat
stronger than $1$-quasicontinuity; recall that $\capa_1$ and $\mathcal H^{d-1}$
have the same null sets. The following simple example shows that 
for general BV functions we cannot have such a stronger continuity property,
demonstrating
that quasicontinuity seems to be the correct concept to consider.

\begin{example}
	Take an enumeration of all the points on the plane with rational
	coordinates $\{q_j\}_{j=1}^{\infty}$ and define
	the ``enlarged rationals''
	\[
	E:=\bigcup_{j=1}^{\infty}B(q_j,2^{-j}).
	\]
	Clearly $\mathcal L^2(E)\le \pi/2$.
	By lower semicontinuity and subadditivity we have (recall \eqref{eq:definition of total variation})
	\[
	\Var(\mathbbm{1}_E,\R^2)\le
	\sum_{j=1}^{\infty}\Var(\mathbbm{1}_{B(q_j,2^{-j})},\R^2)
	=2\pi\sum_{j=1}^{\infty}2^{-j}=2\pi.
	\]
	Thus $\mathbbm{1}_E\in\BV(\R^2)$.
	We have $M\mathbbm{1}_E(x)<1$ for every $x\in \R^2$ with
	$\mathbbm{1}_E^{\vee}(x)=0$. However,
	for every such $x$ there is a sequence
	of points $x_j\to x$ such that $x_j\in E$ for every $j\in\N$.
	Thus $M\mathbbm{1}_E(x_j)=1$ for every $j\in\N$.
	Hence $M\mathbbm{1}_E$
	is discontinuous at every point in the set
	$\{x\in\R^2\colon \mathbbm{1}_E^{\vee}(x)=0\}$,
	which has even infinite Lebesgue measure.
\end{example}

\section{Continuity and Lusin property on lines}\label{sec:continuity and Lusin}

In this section we prove that the non-centered maximal function of an SBV function,
when restricted to almost every line parallel to a coordinate axis,
is continuous and has the Lusin property. The Lusin property for a function
$v$ defined on $V\subset\R$ states that
\begin{equation}
\textrm{if }N\subset V\textrm{ with }\mathcal L^1(N)=0,\ \textrm{ then }
\mathcal L^1(v(N))=0.
\end{equation}

First we prove this kind of property in the following form.
Recall that $S_u$ denotes the set of non-Lebesgue points of $u$, and that $\widetilde{u}$
is the Lebesgue representative of $u$.

\begin{lemma}\label{lem:1d abs cont}
Let $V\subset\R$ be open and let $u\in\BV_{\loc}(V)$.
If $N\subset V\setminus S_u$ with $|Du|(N)=0$, then
\[
\mathcal L^1(\widetilde{u}(N))=0.
\]
\end{lemma}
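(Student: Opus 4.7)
\medskip

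The plan is to combine the one-dimensional fundamental theorem of calculus for BV functions (inequality \eqref{eq:fundamental theorem of calculus for BV}) with outer regularity of the Radon measure $|Du|$. The key observation is that on $V \setminus S_u$, the representative $\widetilde{u}$ agrees with $u^{\vee}$, so \eqref{eq:fundamental theorem of calculus for BV} yields an inequality of the form $|\widetilde{u}(x) - \widetilde{u}(y)| \le |Du|([x,y])$ whenever $x,y \in N$ lie in the same connected component of $V$ with $[x,y] \subset V$. This allows us to control the diameter of the image of $N$ on each piece by the mass of $|Du|$ there.

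Fix $\eps > 0$. Since $|Du|$ is a Radon measure on $V$ and $|Du|(N) = 0$, by outer regularity I would find an open set $U \subset V$ with $U \supset N$ and $|Du|(U) < \eps$. Writing $U$ as a countable disjoint union of open intervals $U = \bigsqcup_{k} I_k$ with $I_k \subset V$, I then set $N_k \coloneqq N \cap I_k$, so that $N = \bigsqcup_k N_k$.

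For any two points $x,y \in N_k$, the segment $[x,y]$ lies in $I_k \subset V$, and $I_k$ is a connected subset of a single component of $V$. Hence by \eqref{eq:fundamental theorem of calculus for BV} and the identity $\widetilde{u} = u^{\vee}$ on $V \setminus S_u \supset N$,
\[
|\widetilde{u}(x) - \widetilde{u}(y)| = |u^{\vee}(x) - u^{\vee}(y)| \le |Du|([x,y]) \le |Du|(I_k).
\]
Taking the supremum over $x,y \in N_k$, I conclude that $\widetilde{u}(N_k)$ is contained in an interval of length at most $|Du|(I_k)$, so its (outer) Lebesgue measure is at most $|Du|(I_k)$.

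Summing over $k$ and using that the intervals $I_k$ are pairwise disjoint,
\[
\mathcal L^1(\widetilde{u}(N)) \le \sum_{k} \mathcal L^1(\widetilde{u}(N_k)) \le \sum_k |Du|(I_k) = |Du|(U) < \eps.
\]
Letting $\eps \to 0$ gives the conclusion. There is no serious obstacle here; the only small points to watch are that $\widetilde{u}$ is well-defined on $N$ (guaranteed by $N \subset V \setminus S_u$) and that the intervals $I_k$ are genuinely contained in a single component of $V$ so that \eqref{eq:fundamental theorem of calculus for BV} applies, both of which are immediate from $U \subset V$.
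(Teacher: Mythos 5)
Your argument is correct, and it reaches the conclusion by a genuinely simpler route than the paper's own proof. Both arguments begin identically: reduce to the representative $u^{\vee}$ (which coincides with $\widetilde{u}$ on $N\subset V\setminus S_u$), and use outer regularity of $|Du|$ to enclose $N$ in an open set $U\subset V$ with $|Du|(U)<\eps$. From there the paper works on the \emph{image} side: it covers $u^{\vee}(N)$, minus an at most countable set of exceptional values, by a fine family of intervals $[\inf u^{\vee}(I),\sup u^{\vee}(I)]$ arising from arbitrarily short intervals $I\subset U$, extracts a disjoint subfamily via Vitali's covering theorem, and then sums using \eqref{eq:fundamental theorem of calculus for BV}; checking that this covering is fine is precisely where the continuity of $u^{\vee}$ at points of $N\subset V\setminus S_u$ enters. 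You instead decompose the \emph{domain}: writing $U$ as the disjoint union of its component intervals $I_k$ and noting that $\diam(u^{\vee}(N\cap I_k))\le |Du|(I_k)$ by the two-point estimate \eqref{eq:fundamental theorem of calculus for BV} (valid since $[x,y]\subset I_k$ lies in a single component of $V$), you conclude by subadditivity of $\mathcal L^1$ and additivity of $|Du|$ over the $I_k$, with no covering theorem at all. Your version buys brevity, and it uses the hypothesis $N\cap S_u=\emptyset$ only to make $\widetilde{u}$ well defined, so it in fact yields $\mathcal L^1(u^{\vee}(N))=0$ for an arbitrary $|Du|$-null set $N$; the paper's Vitali argument is the more robust template in situations where one controls oscillation only on short intervals rather than between arbitrary pairs of points, but that extra generality is not needed here. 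The only small steps to make explicit in a polished write-up are the ones you already flagged, plus the elementary fact that a subset of $\R$ of diameter $L$ has outer Lebesgue measure at most $L$.
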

\begin{proof}
The claim is equivalent with $\mathcal L^1(u^{\vee}(N))=0$;
we will work with the everywhere defined representative $u^{\vee}$
since some of the points that
we examine may be in the jump set $S_u$.
Fix $\eps>0$. We can take an open set $U$ with $N\subset U\subset V$
and $|Du|(U)<\eps$.
For every $x\in N$, we can choose an arbitrarily short compact interval $I\ni x$
contained in $U$. Consider the collection of intervals (understood to be
nondegenerate, i.e. consisting of more than one point)
\[
\mathcal I:=\{I\colon x\in I\subset U,\, x\in N\}.
\]
These form a covering of $N$.
Let
\[
H:=\{h\in\R\colon \textrm{for some }I\in\mathcal I,\ u^{\vee}(I)=\{h\}\}.
\]
The set $H$ can be at most countable,
since the intervals $I$ are nondegenerate.
Now the collection of intervals
\[
\mathcal J:=\big\{[\inf u^{\vee}(I) ,\,\sup u^{\vee}(I)]\colon I\in \mathcal I,
\,u^{\vee}\ \textrm{is not constant on }I\big\}
\]
is a covering of $u^{\vee}(N)\setminus H$. It is a fine covering, since every
$x\in N\subset V\setminus S_u$ is a point of continuity of $u^{\vee}$
(recall \eqref{eq:fundamental theorem of calculus for BV}).
By Vitali's covering theorem, there exists a countable collection of disjoint
intervals $\{J_j\}_{j=1}^{\infty}$ selected from $\mathcal J$
such that
\[
\mathcal L^1\Bigg(u^{\vee}(N)\setminus \bigcup_{j=1}^{\infty}J_j\Bigg)=0.
\]
For every $J_j$, there exists $I_j$ such that
$J_j=[\inf u^{\vee}(I_j),\sup u^{\vee}(I_j)]$. Then
by \eqref{eq:fundamental theorem of calculus for BV},
\[
\mathcal L^1(u^{\vee}(N))
\le \sum_{j=1}^{\infty}\mathcal L^1(J_j)
\le \sum_{j=1}^{\infty}|Du|(I_j)\le |Du|(U)<\eps.
\]
Since $\eps>0$ was arbitrary, the result follows.
\end{proof}

Define
\[
H_u:=\{x\in \Om\colon M_{\Om}u(x)>|u|^{\vee}(x)\}.
\]
Recall that we denote by $\pi\colon\R^d\to \R^{d-1}$ the orthogonal projection onto
$\R^{d-1}$: for $x=(x_1,\ldots,x_d)\in\R^d$,
\[
\pi((x_1,\ldots,x_d)):=(x_1,\ldots,x_{d-1}).
\] 

\begin{proposition}\label{prop:capacity and Hausdorff measure}
Let $A\subset \R^d$. Then
\[
2\mathcal H^{d-1}(\pi(A))\le \capa_1(A).
\]
\end{proposition}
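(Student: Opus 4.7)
The plan is to test each admissible function in the definition of $\capa_1(A)$ against the projection $\pi(A)$ by integrating along vertical lines in the $e_d$-direction, exploiting that a function in $W^{1,1}(\R)$ that vanishes at infinity and reaches the value $1$ must have total variation at least $2$.

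Concretely, I would start from the definition of $\capa_1(A)$: fix any $u\in W^{1,1}(\R^d)$ with $u\ge 1$ on some open set $G\supset A$, and aim to show
\[
\int_{\R^d}|Du|\,dy\ \ge\ 2\mathcal H^{d-1}(\pi(A)),
\]
from which the conclusion follows by taking an infimum (discarding the $|u|$-term is allowed since it is nonnegative). For $\mathcal L^{d-1}$-a.e.\ $z\in\R^{d-1}$, the section $u_z(t):=u(z,t)$ is in $W^{1,1}(\R)$, its continuous representative $\widetilde{u_z}$ tends to $0$ at $\pm\infty$, and Fubini gives $\int_{\R}|u_z'(t)|\,dt=\int_{\R}|\partial_d u(z,t)|\,dt$. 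Call the set of such \emph{good} $z$'s $Z$, so $\R^{d-1}\setminus Z$ is $\mathcal L^{d-1}$-null.

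Next I would show that if $z\in \pi(A)\cap Z$, then $\int_{\R}|u_z'|\,dt\ge 2$. Pick $(z,t_0)\in A\subset G$; since $G$ is open, $u_z\ge 1$ a.e.\ on a neighborhood of $t_0$, hence the continuous representative satisfies $\widetilde{u_z}(t_0)\ge 1$. Writing
\[
\widetilde{u_z}(t_0)-\widetilde{u_z}(t)=\int_{t}^{t_0}u_z'(s)\,ds
\]
and letting $t\to \pm\infty$ gives $\int_{-\infty}^{t_0}|u_z'|\,ds\ge 1$ and $\int_{t_0}^{\infty}|u_z'|\,ds\ge 1$, so their sum is $\ge 2$. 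Define the measurable set
\[
E:=\Bigl\{z\in\R^{d-1}\colon \int_{\R}|\partial_d u(z,t)|\,dt\ge 2\Bigr\};
\]
then $\pi(A)\subset E\cup (\R^{d-1}\setminus Z)$, and since $\R^{d-1}\setminus Z$ is $\mathcal H^{d-1}$-null (as $\mathcal H^{d-1}$ and $\mathcal L^{d-1}$ coincide on subsets of $\R^{d-1}$), outer regularity gives $\mathcal L^{d-1}(E)\ge \mathcal H^{d-1}(\pi(A))$. Finally, Fubini together with $|Du|\ge |\partial_d u|$ yields
\[
\int_{\R^d}|Du|\,dy\ \ge\ \int_{\R^{d-1}}\!\!\int_{\R}|\partial_d u(z,t)|\,dt\,dz
\ \ge\ 2\mathcal L^{d-1}(E)\ \ge\ 2\mathcal H^{d-1}(\pi(A)).
\]

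The main (minor) obstacle is the set-theoretic one in the last step: $\pi(A)$ need not be Borel, so one cannot Fubini-integrate over $\pi(A)$ directly. The trick is to avoid this by passing to the measurable set $E$ of $z$'s where the line integral already exceeds $2$, and then to use that $\mathcal H^{d-1}$ on $\R^{d-1}$ agrees with $\mathcal L^{d-1}$ (outer) measure, so discarding an $\mathcal L^{d-1}$-null subset of $\R^{d-1}$ does not affect $\mathcal H^{d-1}(\pi(A))$. Everything else is a direct Fubini / absolute continuity on lines argument.
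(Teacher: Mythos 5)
Your proof is correct and follows essentially the same route as the paper: test an admissible $u$ against vertical lines, note that on almost every line through $A$ the section must rise from $0$ at $-\infty$ to $1$ and back, so $\int_{\R}|\partial_d u(z,t)|\,dt\ge 2$, then integrate over $\R^{d-1}$ via Fubini. The only difference is that you carefully handle the possible non-measurability of $\pi(A)$ by passing to the measurable set $E$ and using that $\mathcal H^{d-1}$ agrees with $\mathcal L^{d-1}$ on $\R^{d-1}$, a point the paper's shorter proof leaves implicit.
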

\begin{proof}
Consider $u\in W^{1,1}(\R^d)$ with $u\ge 1$ in a neighborhood of $A$.
Then on almost every line $l$ in the $d$th coordinate direction intersecting
$A$, we have
\[
\int_{l}\left|\frac{du}{dx_d}\right|\,ds\ge 2.
\]
Integrating over $\R^{d-1}$, we get
\[
\int_{\R^d}|\nabla u|\,dx\ge 2\mathcal H^{d-1}(\pi(A)).
\]
Thus $\Vert u\Vert_{W^{1,1}(\R^d)}\ge 2\mathcal H^{d-1}(\pi(A))$ and
we get the result by taking infimum over all such $u$.
\end{proof}

Recall from Section \ref{sec:BV functions} that a BV function $u$ is in the SBV class
if $|D^c u|(\Om)=0$.

\begin{theorem}\label{thm:continuity on lines}
Let  $u\in\BV_{\loc}(\Om)\cap L^{1}(\Om)$ or $u\in \BV_{\loc}(\R^d)$.
Then $M_{\Om}u$ is continuous on almost every line parallel
to a coordinate axis.

If $u\in \SBV_{\loc}(\R^d)\cap L^{1}(\R^d)$
or $u\in \SBV_{\loc}(\R^d)\cap L^{\infty}(\R^d)$, then $Mu$
also has the Lusin property
on almost every line parallel to a coordinate axis.
\end{theorem}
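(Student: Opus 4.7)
I would prove the two claims separately.

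For the continuity on a.e.\ coordinate line, the idea is to push the quasicontinuity of $M_{\Om}u$ (Theorem \ref{thm:quasicontinuity of noncentered maximal function}) through the projection estimate Proposition \ref{prop:capacity and Hausdorff measure}. For each $n\in\N$, quasicontinuity gives an open $G_n\subset\Om$ with $\capa_1(G_n)<1/n$ on whose complement $M_{\Om}u$ is continuous. Fixing a coordinate direction and letting $\pi$ be the corresponding orthogonal projection, Proposition \ref{prop:capacity and Hausdorff measure} gives $\mathcal H^{d-1}(\pi(G_n))<1/(2n)$, so $E:=\bigcap_n\pi(G_n)$ has $\mathcal H^{d-1}$-measure, and hence $\mathcal L^{d-1}$-measure, zero. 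For every $z\notin E$ the line $l_z$ through $z$ in the chosen direction is contained in $\Om\setminus G_n$ for some $n$, so $M_{\Om}u|_{l_z}$ is continuous. Repeating in each coordinate direction proves the first claim.

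For the Lusin property, assume $u\in\SBV_{\loc}(\R^d)\cap L^1(\R^d)$ (the $L^{\infty}$ case is analogous) and fix the $e_d$-direction. By the slicing theory of Section \ref{subsec:1d sections}, for $\mathcal L^{d-1}$-a.e.\ $z$ we have $u_z\in\SBV(\R)$ and hence $|u_z|\in\SBV(\R)$, together with $S_{u_z}=(S_u)_z$ and $|u|^{\vee}(z,t)=\widetilde{|u_z|}(t)$ for every $t\notin S_{|u_z|}$. Combining Proposition \ref{prop:capacity and Hausdorff measure} with \eqref{eq:null sets of capa and Hausdorff}, we may also assume $l_z$ avoids the $\mathcal H^{d-1}$-null set $S_u\setminus J_u$ on which Proposition \ref{prop:maximal function and upper representative} can fail. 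Fix such a $z$ and $N\subset l_z$ with $\mathcal L^1(N)=0$, and split $N=(N\setminus H_u)\cup(N\cap H_u)$. On $\Om\setminus H_u$ one has $Mu=|u|^{\vee}$ by definition of $H_u$. The part of $N\setminus H_u$ lying in the at most countable set $S_{|u_z|}$ has countable image; on the complement, Lemma \ref{lem:1d abs cont} applies to $|u_z|$ (its Cantor part vanishes by SBV, its absolutely continuous part vanishes on $N$ since $\mathcal L^1(N)=0$, and its jump part is concentrated on $S_{|u_z|}$), giving $\mathcal L^1(\widetilde{|u_z|}(N\setminus H_u\setminus S_{|u_z|}))=0$, which by the slicing identity equals $\mathcal L^1(Mu(N\setminus H_u\setminus S_{|u_z|}))$.

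For $N\cap H_u$ the crux is the claim: for every $x\in H_u$ there exists $R_x>0$ with $Mu(x)=M_{R_x}u(x)$. Given an almost-optimal sequence $B(z_j,r_j)\ni x$ with $\vint{B(z_j,r_j)}|u|\to Mu(x)$, if $r_j\to 0$ along a subsequence then $z_j\to x$, and a short blow-up computation (direct at Lebesgue points, and at jump points via the half-ball identities \eqref{eq:jump value 1}) gives $\limsup_j\vint{B(z_j,r_j)}|u|\le|u|^{\vee}(x)<Mu(x)$, a contradiction; hence $\liminf_j r_j>0$, and taking $R_x=\tfrac12\liminf_j r_j$ yields $M_{R_x}u(x)\ge Mu(x)$, so equality holds. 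Setting $F_n:=\{y\in\R^d\colon Mu(y)=M_{1/n}u(y)\}$, the claim gives $H_u\subset\bigcup_n F_n$, and on each $F_n$ the function $Mu$ coincides with the globally Lipschitz $M_{1/n}u$ (Proposition \ref{prop:Lipschitz continuity}), which maps $\mathcal L^1$-null sets to $\mathcal L^1$-null sets. Summing over $n$ gives $\mathcal L^1(Mu(N\cap H_u))=0$, completing the proof. The main obstacle is precisely this no-shrinking-radius claim; the rest is bookkeeping built on quasicontinuity, Proposition \ref{prop:capacity and Hausdorff measure}, the SBV slicing identities, and Lemma \ref{lem:1d abs cont}.
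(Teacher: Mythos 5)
Your proposal is correct, and its overall architecture matches the paper's: continuity on a.e.\ line via quasicontinuity of $M_{\Om}u$ (Theorem \ref{thm:quasicontinuity of noncentered maximal function}) pushed through the projection bound of Proposition \ref{prop:capacity and Hausdorff measure}, and the Lusin property via the splitting at $H_u$, with a Lipschitz-type argument on $H_u$ and Lemma \ref{lem:1d abs cont} off $H_u$. The genuine difference is in how you establish the key identity $Mu=M_Ru$ on $H_u$. The paper proves it \emph{locally uniformly}: using the quasi-upper-semicontinuity of $u^{\vee}$ (Theorem \ref{thm:quasisemicontinuity}) together with Lemma \ref{lem:uniform convergence of u} to control the contribution of the exceptional set $G$, it finds for each $x_0\in H_u\setminus U$ a single radius $R$ with $M_{\Om}u=M_{\Om,R}u$ on all of $B(x_0,R)\setminus U$, whence $Mu$ is locally Lipschitz on $H_u\setminus U$. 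You instead prove the identity \emph{pointwise} by a blow-up argument at each $x\in H_u$ outside the $\mathcal H^{d-1}$-null set $S_u\setminus J_u$ (the computation does work at Lebesgue points and at jump points, provided one subtracts $|u|^{\vee}(x)$ before enlarging $B(z_j,r_j)$ to $B(x,2r_j)$, so as not to pick up the factor $2^d$), and then decompose $H_u\subset\bigcup_n F_n$ with $Mu|_{F_n}=M_{1/n}u|_{F_n}$ Lipschitz by Proposition \ref{prop:Lipschitz continuity}. This buys you a more elementary treatment of the Lusin step --- Theorem \ref{thm:quasisemicontinuity} and Lemma \ref{lem:uniform convergence of u} are not needed there --- at the cost of a weaker conclusion (agreement with a Lipschitz function on a set rather than local Lipschitzness on a relatively open set), which nevertheless suffices since a Lipschitz function maps null sets to null sets. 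Two further minor divergences: you handle the countable exceptional set on each line via $S_{|u_z|}$, which is automatically countable for a one-dimensional BV function, whereas the paper uses the countability of $(J_u)_z$ obtained from rectifiability and the coarea formula; and you should note (or reduce to $u\ge 0$ at the outset, as the paper does) that $S_{|u|}\setminus J_{|u|}\subset S_u\setminus J_u$ and that $|u_z|$ inherits the SBV property, so that your applications of Proposition \ref{prop:maximal function and upper representative} and Lemma \ref{lem:1d abs cont} to $|u|$ are legitimate.
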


\begin{proof}
Again we can assume that $u\ge 0$.
First we prove the continuity on lines.
Fix $\eps>0$.
By Theorems \ref{thm:quasisemicontinuity} and
\ref{thm:quasicontinuity of noncentered maximal function},
we can take an open set $G\subset\Om$ such that
$\capa_1(G)<\eps/C_2$ and $u^{\vee}|_{\Om\setminus G}$ is finite and upper semicontinuous,
and $M_{\Om}u|_{\Om\setminus G}$ is finite and continuous.
Since $\capa_1(S_u\setminus J_u)=\mathcal H^{d-1}(S_u\setminus J_u)=0$
(recall \eqref{eq:null sets of capa and Hausdorff}),
we can also assume that $G\supset S_u\setminus J_u$. 
By Lemma \ref{lem:uniform convergence of u} we can take an open set
$U\supset G$ such that $\capa_1(U)<\eps$ and
\[
\frac{1}{\mathcal L^d(B(x,r))}\int_{G\cap B(x,r)}u\,dy
\to 0\quad\textrm{as }r\to 0
\]
locally uniformly for $x\in \Om\setminus U$.
We wish to study the behavior of $M_{\Om}u$ in $H_u\setminus U$.
Consider $x_0\in H_u\setminus U$.
We have $\alpha:=M_{\Om}u(x_0)\in (0,\infty)$.
Let $\delta:=\alpha-u^{\vee}(x_0)>0$.
By upper semicontinuity of $u^{\vee}|_{\Om\setminus G}$,
for some $R_1>0$ we have
\begin{equation}\label{eq:upper semicontinuity with delta}
u^{\vee}(x)<\alpha-\frac{3\delta}{4}
\end{equation}
for all
$x\in B(x_0,R_1)\setminus G$.
By lower semicontinuity of the maximal function,
there exists $R_2>0$ such that
$M_{\Om}u(x)>\alpha-\delta/4$ for all $x\in B(x_0,R_2)$.
Moreover, by the choice of the set $U$, there exists $R_3>0$
such that for all $x\in B(x_0,R_3)\setminus U$ we have
\begin{equation}\label{eq:uniform smallness for G}
\frac{1}{\mathcal L^d(B(x,s))}\int_{G\cap B(x,s)}u\,dy
\le\frac{\delta}{2^{d+1}}
\quad\textrm{for all }0<s<R_3.
\end{equation}
Let $R:=\min\{R_1,R_2,R_3\}/4$.
Now consider any $x\in B(x_0,R)\setminus U$,
and any ball $B(z,r)\ni x$ with $r\in (0,R)$. We have
\begin{align*}
\frac{1}{\mathcal L^d(B(z,r))}\int_{B(z,r)}u\,dy
&= \frac{1}{\mathcal L^d(B(z,r))}\int_{B(z,r)\cap G}u\,dy
+\frac{1}{\mathcal L^d(B(z,r))}\int_{B(z,r)\setminus G}u\,dy\\
&\le \frac{2^d}{\mathcal L^d(B(x,2r))}\int_{B(x,2r)\cap G}u\,dy
+\alpha-\frac{3\delta}{4}\quad\textrm{by }\eqref{eq:upper semicontinuity with delta}\\
&\le\frac{\delta}{2}+\alpha-\frac{3\delta}{4}\quad\textrm{by }\eqref{eq:uniform smallness for G}\\
&=\alpha-\frac{\delta}{4}.
\end{align*}
On the other hand, we had $M_{\Om}u(x)>\alpha-\delta/4$ for all
$x\in B(x_0,R)$.
Recalling the definition \eqref{eq:auxiliary maximal operator}, we have
\begin{equation}\label{eq:maximal function is M R}
M_{\Om}u(x)=M_{\Om,R}u(x)
\end{equation}
for every $x\in B(x_0,R)\setminus U$.

Now we examine the behavior of $M_{\Om}u$ on lines.
Recall the notation and results from Section \ref{subsec:one dimensional sections}.
Without loss of generality we can
consider lines parallel to the $d$:th coordinate axis.
Recall that $\pi$ denotes the orthogonal projection onto $\R^{d-1}$.
By Proposition \ref{prop:capacity and Hausdorff measure} we know that
\[
\mathcal H^{d-1}(\pi(U))\le \capa_1(U)<\eps.
\]
Thus it is enough to consider a line not intersecting $U$.
In other words, consider a fixed $z\in \pi(\Om)\setminus \pi(U)$ and then consider
the line $(z,t)$, $t\in \R$.
Since we know that $M_{\Om}u|_{\Om\setminus U}$ is continuous,
$t\mapsto M_{\Om}u(z,t)$ is continuous, proving the first claim.

Now suppose $u\in \SBV_{\loc}(\R^d)\cap L^{1}(\R^d)$
or $u\in \SBV_{\loc}(\R^d)\cap L^{\infty}(\R^d)$.
The function $u_z$ is in
the class $\SBV_{\loc}(\R)$ for almost every $z\in \R^{d-1}$,
and so we can also assume that $u_z\in\SBV_{\loc}(\R)$.
We can further assume that $(J_u)_z$
is at most countable; this follows from the fact that $J_u$ is countably
$d-1$-rectifiable, and from the coarea formula,
see \cite[Theorem 2.93]{AFP}.

Let $N\subset \R$ with zero $1$-dimensional Lebesgue measure. We have 
\[
Mu(z,N)
=  M u((\{z\}\times N)\cap H_u)
\cup M u((\{z\}\times N)\setminus H_u).
\]
Here the first set has zero one-dimensional Lebesgue measure by
the fact that $Mu|_{H_u\setminus U}$ is locally Lipschitz, which is given by
\eqref{eq:maximal function is M R} and
Proposition \ref{prop:Lipschitz continuity}.
For the second set we have, recalling that $U\supset S_u\setminus J_u$,
\begin{align*}
M u((\{z\}\times N)\setminus H_u)
&\subset
M u((\{z\}\times N)\setminus (H_u\cup S_u))
\cup M u((\{z\}\times N)\cap J_u)\\
&= \widetilde{u}((\{z\}\times N)\setminus (H_u\cup S_u))
\cup M u((\{z\}\times N)\cap J_u)\\
&\subset \widetilde{u_z}(z,N\setminus S_{u_z(\cdot)})
\cup M u((\{z\}\times N)\cap J_u).
\end{align*}
by \eqref{eq:sections and jump sets} (which we can assume to hold by discarding another
$\mathcal L^{d-1}$-negligible set).
Here the first set has zero measure since $u_z\in \SBV_{\loc}(\Om_z)$ and
so $|Du_z|(N\setminus S_{u_z(\cdot)})=0$, and then we can use
Lemma \ref{lem:1d abs cont}. 
The second set has zero measure since $(\{z\}\times \R)\cap J_u$
was at most countable.
In total, $\mathcal L^1(Mu(z,N))=0$, and so $Mu$ has the Lusin property
on almost every line parallel to a coordinate axis.
\end{proof}

If we could give a positive answer to the following open problem, then
we could extend Theorem \ref{thm:continuity on lines} from SBV functions to BV functions.

\begin{openproblem}\label{openproblem about contact set}
Let $u\in\BV(\R^d)$. Is it true that $Mu(x)>|u|^{\vee}(x)$ for
$|D^c u|$-almost every $x\in\R^d$?
\end{openproblem}

In one dimension the answer is yes, see Proposition \ref{prop:Ddu outside Du is zero}.

\section{Sobolev property}\label{sec:Sobolev property}

In this section we show that if the non-centered maximal function of an SBV
function is locally BV, then it is in fact locally Sobolev.

We rely on the following direction of the Banach-Zarecki Theorem.
\begin{theorem}
Let $V\subset\R$ be open and let $v\in\BV_{\loc}(V)$ be a continuous function that satisfies the Lusin
property. Then $v$ is absolutely continuous on $V$.
\end{theorem}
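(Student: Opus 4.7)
The plan is to show that $Dv$ is absolutely continuous with respect to $\mathcal{L}^1$; together with the continuity of $v$, this is equivalent to $v$ being absolutely continuous in the classical sense. Since $v$ is continuous we have $S_v=\emptyset$, hence $D^j v=0$, and so $Dv=D^a v+D^c v$. The problem therefore reduces to proving $D^c v=0$, equivalently $|Dv|(N)=0$ for every Borel $N\subset V$ with $\mathcal{L}^1(N)=0$.

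Fix such an $N$. The Lusin property immediately gives $\mathcal{L}^1(v(N))=0$. I would then invoke the one-dimensional coarea formula for continuous BV functions,
\[
|Dv|(B)=\int_{\R}\#(v^{-1}(t)\cap B)\,dt\quad\text{for every Borel }B\subset V,
\]
where $\#$ denotes cardinality. This follows from the Fleming--Rishel coarea formula $|Dv|(B)=\int_{\R}P(\{v>t\},B)\,dt$ (cf.\ \cite[Theorem 3.40]{AFP}) combined with the observation that, by continuity of $v$, the essential boundary of $\{v>t\}$ within $V$ coincides with $v^{-1}(t)\cap V$ for $\mathcal{L}^1$-a.e.\ $t$.

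Applied to $N$, this identity yields
\[
|Dv|(N)=\int_{v(N)}\#(v^{-1}(t)\cap N)\,dt.
\]
By Banach's indicatrix theorem the integrand is finite for $\mathcal{L}^1$-a.e.\ $t$ (since $v$ is BV on every compact subinterval of $V$), while the domain of integration $v(N)$ has $\mathcal{L}^1$-measure zero; hence the integral vanishes. Thus $|Dv|$ vanishes on every $\mathcal{L}^1$-null subset of $V$, so $D^c v=0$, and $v$ is absolutely continuous on every relatively compact subinterval of $V$.

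The main obstacle is the coarea identity itself, which is classical but not recorded in the excerpt; once it is granted, the measure-theoretic conclusion is essentially immediate. A conceptually different route would be to write $v$ as a $1$-Lipschitz function composed with the strictly increasing continuous map $\phi(x)=|Dv|([a,x])+x$ on a compact subinterval $[a,b]\subset V$, which reduces the absolute continuity of $v$ to that of the continuous monotone function $\phi$. Since continuous monotone functions with the Lusin property are absolutely continuous (by a direct pushforward-measure argument on the Lebesgue--Stieltjes measure of $\phi$), one would then need to verify that $\phi$ inherits the Lusin property from $v$; this however appears to require essentially the same coarea-type input, so the two approaches are in effect equivalent.
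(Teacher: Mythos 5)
The paper does not actually prove this statement: it is quoted as the classical Banach--Zarecki theorem and used as a black box, so there is no argument of the author's to compare yours against. Your proposal is a correct self-contained proof along the natural lines: continuity forces $S_v=\emptyset$ and hence kills the jump part, the Lusin property kills the singular part via the one-dimensional coarea formula, and $Dv\ll\mathcal L^1$ together with continuity yields classical absolute continuity on compact subintervals (which is the intended meaning of ``absolutely continuous on $V$'' here, and is all that is used in Theorem \ref{thm:BV to Sobo}). Two remarks that simplify your main obstacle. First, you do not need the Banach indicatrix \emph{identity} with equality, whose justification (that $\partial^*\{v>t\}$ exhausts $v^{-1}(t)$ for a.e.\ $t$) is the only genuinely delicate point in your write-up: the one-sided inclusion $\partial^*\{v>t\}\subseteq v^{-1}(t)$ holds for \emph{every} $t$ by continuity alone, and combined with $|Dv|(N)=\int_{\R}\mathcal H^0(\partial^*\{v>t\}\cap N)\,dt$ (the coarea formula \eqref{eq:coarea formula} together with \eqref{eq:perimeter and Hausdorff}, both already used in the paper) it gives $|Dv|(N)\le\int_{v(N)}\#\bigl(v^{-1}(t)\cap N\bigr)\,dt$, which is all you use. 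Second, the appeal to Banach's indicatrix theorem for a.e.-finiteness of the integrand is unnecessary: the integral of any nonnegative measurable function over the null set $v(N)$ vanishes. With these adjustments the argument is complete and rests only on facts the paper already invokes elsewhere.
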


We say that a function $v$ on $\R^d$ is ACL if it is absolutely continuous
on almost every line parallel to a coordinate axis.

The following theorem is Theorem \ref{thm:BV to Sobo intro}
in a slightly more general form.

\begin{theorem}\label{thm:BV to Sobo}
Let $u\in \SBV_{\loc}(\R^d)\cap L^{1}(\R^d)$
or $u\in \SBV_{\loc}(\R^d)\cap L^{\infty}(\R^d)$.
If $Mu\in \BV_{\loc}(\R^d)$, then
$Mu$ is ACL and in the class $W_{\loc}^{1,1}(\R^d)$. 
\end{theorem}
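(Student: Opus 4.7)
The plan is to reduce the statement to the Banach--Zarecki theorem applied slice by slice, since Theorem~\ref{thm:continuity on lines} already supplies both continuity and the Lusin property of $Mu$ along almost every line parallel to a coordinate axis. By symmetry it suffices to work with lines parallel to $e_d$, writing $(Mu)_z(t):=Mu(z,t)$ for $z\in\R^{d-1}$ and $t\in\R$.

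First I would collect three facts on a full-measure set of slicing parameters. (i)~By Theorem~\ref{thm:continuity on lines}, there is an $\mathcal H^{d-1}$-null set $N_1\subset\R^{d-1}$ such that for $z\notin N_1$ the slice $(Mu)_z$ is continuous on $\R$ and has the Lusin property. (ii)~Since $Mu\in\BV_{\loc}(\R^d)$, the BV-slicing theorem recalled in Section~\ref{subsec:one dimensional sections} furnishes an $\mathcal L^{d-1}$-null set $N_2$ such that for $z\notin N_2$ the slice $(Mu)_z$ lies in $\BV_{\loc}(\R)$. (iii)~The hypothesis $Mu\in\BV_{\loc}(\R^d)$ directly gives $Mu\in L^1_{\loc}(\R^d)$ from the definition of $\BV_{\loc}$.

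For every $z\in\R^{d-1}\setminus(N_1\cup N_2)$ the slice $(Mu)_z$ is simultaneously $\BV_{\loc}$, continuous, and satisfies the Lusin property, so the Banach--Zarecki theorem stated above yields that $(Mu)_z$ is absolutely continuous on $\R$. Repeating the argument in each coordinate direction gives that $Mu$ is ACL. Combining this with (iii) and the ACL-characterization of Sobolev functions recalled at the end of Section~\ref{subsec:one dimensional sections}, we conclude that $Mu\in W^{1,1}_{\loc}(\R^d)$.

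The real work has already been absorbed into Theorems~\ref{thm:quasicontinuity of noncentered maximal function} and~\ref{thm:continuity on lines}; the only bookkeeping left is to simultaneously discard the two exceptional sets $N_1$ (governed by $\mathcal H^{d-1}$) and $N_2$ (governed by $\mathcal L^{d-1}$), which is harmless since $\mathcal H^{d-1}$-null sets are in particular $\mathcal L^{d-1}$-null. Thus the hardest step here is not any new analytic estimate but rather correctly invoking Banach--Zarecki on a slice that inherits \emph{both} the regularity conferred by the SBV hypothesis on $u$ (via Theorem~\ref{thm:continuity on lines}) and the Radon-measure control conferred by the $\BV_{\loc}$ hypothesis on $Mu$.
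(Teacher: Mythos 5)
Your proposal is correct and follows essentially the same route as the paper: slice in each coordinate direction, combine the $\BV_{\loc}$ slicing of $Mu$ with the continuity and Lusin property from Theorem \ref{thm:continuity on lines}, apply Banach--Zarecki to get absolute continuity on almost every line, and conclude $Mu\in W^{1,1}_{\loc}(\R^d)$ from the ACL property together with $Mu\in\BV_{\loc}(\R^d)$. The only cosmetic difference is your explicit bookkeeping of the two null sets $N_1,N_2$, which the paper leaves implicit (and note that on $\R^{d-1}$ the measures $\mathcal H^{d-1}$ and $\mathcal L^{d-1}$ have the same null sets, so this is harmless).
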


\begin{proof}
It is sufficient to consider the $d$:th coordinate direction.
Since $Mu\in\BV_{\loc}(\R^d)$, the function $Mu(z,\cdot)$ is in
the class $\BV_{\loc}(\R)$ for almost every $z\in \R^{d-1}$.
By Theorem \ref{thm:continuity on lines}, $Mu(z,\cdot)$ is also
continuous and has the Lusin property for almost every $z\in \R^{d-1}$.
Now by the Banach-Zarecki theorem,
$M u(z,\cdot)$ is absolutely continuous on $\R$ for almost every $z\in \R^{d-1}$.
In conclusion, $M u$ is ACL.
Since we already know that $Mu\in \BV_{\loc}(\R^d)$,
it follows that $Mu\in W_{\loc}^{1,1}(\R^d)$.
\end{proof}

In Theorem \ref{thm:BV to Sobo}, we would of course
like to prove that $Mu\in \BV_{\loc}(\R^d)$, instead of
assuming this. For sets of finite perimeter, such a better result is possible
due to a very recent result of Weigt \cite{Wei}.
We restate Theorem \ref{thm:perimeter to Sobo intro}, which is our third main theorem:

\begin{theorem}
Let $E\subset \R^d$ be a set of finite perimeter.
Then $M\mathbbm{1}_E\in W^{1,1}_{\loc}(\R^d)$ with
$\Vert \nabla M\mathbbm{1}_E\Vert_{L^1(\R^d)}\le C_d|D\mathbbm{1}_E|(\R^d)$,
where $C_d$ only depends on the dimension $d$.
\end{theorem}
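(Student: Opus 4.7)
The plan is a short combination of three inputs, since all the heavy lifting has already been done earlier in the paper. First I would verify that $u:=\mathbbm{1}_E$ lies in $\SBV(\R^d)\cap L^{\infty}(\R^d)$: since $E$ has finite perimeter, $D\mathbbm{1}_E$ is concentrated on the reduced boundary $\partial^{*}E$, which is $\mathcal{H}^{d-1}$-rectifiable, so $|D^c\mathbbm{1}_E|(\R^d)=0$ and $\mathbbm{1}_E\in\SBV(\R^d)$; the $L^{\infty}$ bound is trivial. Second, I would invoke Weigt's result (cited immediately before the statement) in the case $\Om=\R^d$: it supplies both the qualitative fact $M\mathbbm{1}_E\in\BV_{\loc}(\R^d)$ and the quantitative estimate $|DM\mathbbm{1}_E|(\R^d)\le C_d|D\mathbbm{1}_E|(\R^d)$.

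Third, with these two facts in hand the hypotheses of Theorem \ref{thm:BV to Sobo} are met for $u=\mathbbm{1}_E$ (using the $L^{\infty}$ alternative of that theorem), so it yields $M\mathbbm{1}_E\in W^{1,1}_{\loc}(\R^d)$. Sobolev membership says that the distributional derivative of $M\mathbbm{1}_E$ is absolutely continuous with respect to $\mathcal{L}^d$, so its total variation coincides with the $L^1$-norm of the classical gradient:
\[
\Vert\nabla M\mathbbm{1}_E\Vert_{L^1(\R^d)}=|DM\mathbbm{1}_E|(\R^d)\le C_d|D\mathbbm{1}_E|(\R^d),
\]
which is the asserted bound.

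There is no real obstacle to overcome: the hard analytic work is concentrated upstream in Weigt's BV estimate and in Theorem \ref{thm:BV to Sobo}. The only points that warrant a careful sentence are that $\mathbbm{1}_E$ is genuinely \emph{special} of bounded variation (so that Theorem \ref{thm:BV to Sobo} applies and no Cantor part can obstruct the upgrade to $W^{1,1}$), and that the identification of $|DM\mathbbm{1}_E|(\R^d)$ with $\Vert\nabla M\mathbbm{1}_E\Vert_{L^1(\R^d)}$ is valid precisely because we have already established Sobolev regularity, so that the singular part of the derivative vanishes. The paper's new contribution here is thus the conversion of Weigt's BV bound into a Sobolev bound via the SBV-to-Sobolev mechanism of the previous section.
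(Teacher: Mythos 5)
Your proposal is correct and follows exactly the paper's own (very short) argument: Weigt's theorem gives the $\BV_{\loc}$ membership and the quantitative variation bound, and Theorem \ref{thm:BV to Sobo} (applied to $\mathbbm{1}_E\in\SBV_{\loc}(\R^d)\cap L^\infty(\R^d)$, since a finite-perimeter set need not have finite measure) upgrades this to $W^{1,1}_{\loc}$, after which the absolute continuity of $DM\mathbbm{1}_E$ turns the variation bound into the stated $L^1$ gradient bound. The extra details you supply (rectifiability of $\partial^*E$ ruling out a Cantor part, and the identification of $|DM\mathbbm{1}_E|(\R^d)$ with $\Vert\nabla M\mathbbm{1}_E\Vert_{L^1(\R^d)}$) are left implicit in the paper but are correct.
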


\begin{proof}
Theorem 1.3 of \cite{Wei} states
that $M\mathbbm{1}_E\in \BV_{\loc}(\R^d)$
with $|DM\mathbbm{1}_E|(\R^d)\le C_d|D\mathbbm{1}_E|(\R^d)$.
Then by Theorem \ref{thm:BV to Sobo}, the conclusion follows.
\end{proof}

\section{Formula for the gradient}\label{sec:formula for derivative}

When $u\in\BV(\Om)$ and $M_{\Om}u\in W_{\loc}^{1,1}(\Om)$, there is a weak gradient
$DM_{\Om}u\in L_{\loc}^1(\Om)$. In this section we derive a formula for it,
generalizing \cite[Lemma 2.2(1)]{Lui} where the case $u\in W^{1,1}(\R^d)$
was considered.
However, as we will see, in the case $u\in\BV(\Om$)
the formula is, and can be, valid only under certain conditions.

As usual, $\Om\subset \R^d$ is an arbitrary nonempty open set.

We have the following standard approximation result for BV functions. 

\begin{proposition}\label{prop:approximation for BV}
Let $u\in\BV(\Om)$. Then there exists a sequence of functions $\{v_i\}_{i\in\N}$
in $C^{\infty}(\Om)$
such that $v_i\to u$ in $L^1(\Om)$,
$\lim_{i\to\infty}|D v_i|(\Om)=|Du|(\Om)$,
and $Dv_i\overset{*}{\rightharpoondown} Du$ and
$|Dv_i|\overset{*}{\rightharpoondown} |Du|$ in $\Om$.
\end{proposition}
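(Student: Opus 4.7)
The plan is to use the classical Anzellotti--Giaquinta construction: exhaust $\Om$ by relatively compact open subsets, form a smooth partition of unity with controlled overlap, and then mollify each piece at a scale small enough that the supports stay in the correct annular region. This is a standard result (see e.g.\ \cite[Theorem 3.9]{AFP}), but the proof plan is as follows.

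First, fix an exhaustion $\Om_0=\emptyset\Subset \Om_1\Subset \Om_2\Subset\cdots$ of $\Om$, set $A_k:=\Om_{k+1}\setminus\overline{\Om_{k-1}}$, and choose a smooth partition of unity $\{\zeta_k\}_{k=1}^{\infty}$ subordinate to $\{A_k\}$, so that $\sum_k\zeta_k=1$ on $\Om$ and $\sum_k\nabla\zeta_k=0$. Let $\rho_{\eps}$ be a standard mollifier. For each $i\in\N$ and each $k$, pick $\eps_{i,k}>0$ so small that $\supp((\zeta_k u)*\rho_{\eps_{i,k}})\subset A_k$ and
\[
\|(\zeta_k u)*\rho_{\eps_{i,k}}-\zeta_k u\|_{L^1(\Om)}
+\|(u\nabla\zeta_k)*\rho_{\eps_{i,k}}-u\nabla\zeta_k\|_{L^1(\Om;\R^d)}
<2^{-k-i}.
\]
Define $v_i:=\sum_k(\zeta_k u)*\rho_{\eps_{i,k}}$; the sum is locally finite, so $v_i\in C^{\infty}(\Om)$. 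Since $\sum_k\zeta_k u=u$, the first estimate above gives $v_i\to u$ in $L^1(\Om)$.

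For the variation, I would compute
\[
Dv_i=\sum_k\bigl((\zeta_k\,Du)*\rho_{\eps_{i,k}}\bigr)+\sum_k\bigl((u\nabla\zeta_k)*\rho_{\eps_{i,k}}-u\nabla\zeta_k\bigr),
\]
using $\sum_k\nabla\zeta_k\equiv 0$ to subtract the (vanishing) sum $\sum_k u\nabla\zeta_k$. The second sum has $L^1$ norm at most $2^{-i}$ by construction, while the first sum has total variation at most $\sum_k|\zeta_k\,Du|(A_k)$; if the partition of unity is chosen so that each point lies in at most a bounded number (say two or three) of the sets $A_k$, this is bounded by $|Du|(\Om)+o(1)$ as the exhaustion is refined. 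Combining with the lower semicontinuity of the total variation under $L^1$ convergence, which gives $\liminf_i|Dv_i|(\Om)\ge|Du|(\Om)$, we obtain $|Dv_i|(\Om)\to|Du|(\Om)$.

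Weak* convergence of $Dv_i$ to $Du$ then follows from two facts: the sequence $\{Dv_i\}$ is bounded in $\M(\Om;\R^d)$, and $v_i\to u$ in $L^1(\Om)$ forces $Dv_i\to Du$ in the sense of distributions, which identifies the weak* limit of every convergent subsequence. Weak* convergence $|Dv_i|\overset{*}{\rightharpoondown}|Du|$ is then the standard consequence of combining the lower semicontinuity estimate $\liminf_i|Dv_i|(A)\ge|Du|(A)$ on every open $A\subset\Om$ with the matching total masses $|Dv_i|(\Om)\to|Du|(\Om)$.

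The only real technical obstacle is step two: making sure the mollification radii $\eps_{i,k}$ are small enough that $(\zeta_k u)*\rho_{\eps_{i,k}}$ stays in $A_k$ (so the local finiteness of the sum defining $v_i$ is preserved) while simultaneously controlling both the $L^1$ error and the error from the cutoff-derivative cross-terms $(u\nabla\zeta_k)*\rho_{\eps_{i,k}}-u\nabla\zeta_k$ by a geometric series; once this bookkeeping is set up, everything else is a routine application of lower semicontinuity and distributional convergence.
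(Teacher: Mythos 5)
Your proposal is correct and takes essentially the same route as the paper, which simply cites \cite[Theorem 3.9]{AFP} for the Anzellotti--Giaquinta construction you carry out explicitly and \cite[Propositions 3.13 and 1.80]{AFP} for the two weak* convergence deductions you give (mass bound plus distributional convergence, and lower semicontinuity on open sets plus convergence of total masses). One minor simplification: since $\sum_k\zeta_k\equiv 1$, the first sum satisfies $\sum_k|\zeta_k\,Du|(\Om)=\int_{\Om}\bigl(\sum_k\zeta_k\bigr)\,d|Du|=|Du|(\Om)$ exactly, so the bounded-overlap hypothesis and the $o(1)$ correction in your total-variation estimate are unnecessary.
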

\begin{proof}
By \cite[Theorem 3.9]{AFP} we find a sequence of functions $\{v_i\}_{i\in\N}$
in $C^{\infty}(\Om)$ such that $v_i\to u$ in $L^1(\Om)$ and
$\lim_{i\to\infty}|D v_i|(\Om)=|Du|(\Om)$.
Then by \cite[Proposition 3.13]{AFP} we have in fact $Dv_i\overset{*}{\rightharpoondown} Du$
in $\Om$,
and by \cite[Proposition 1.80]{AFP} also $|Dv_i|\overset{*}{\rightharpoondown} |Du|$ in $\Om$.
\end{proof}

We also have the following simple fact concerning the measures of spheres.

\begin{lemma}\label{lem:measure of spheres}
Let $\nu$ be a positive Radon measure on $\Om$ with
$\nu(\Om)<\infty$ and $\nu\ll \mathcal H^{d-1}$.
Then $\nu(\partial B)>0$ for at most countably many spheres $\partial B$ with
$\overline{B}\subset \Om$.
\end{lemma}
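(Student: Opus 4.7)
The plan is a pigeonhole argument: the finiteness $\nu(\Om)<\infty$ bounds the total available mass, while $\nu\ll\mathcal{H}^{d-1}$ forces distinct spheres to overlap in a $\nu$-null set, so only countably many of them can carry positive $\nu$-measure. To implement this, I would stratify the spheres of positive measure by setting, for each $n\in\N$,
\[
\mathcal{F}_n:=\{\,\overline{B}\subset\Om\colon \nu(\partial B)>1/n\,\},
\]
so that the collection of spheres of positive $\nu$-measure equals $\bigcup_{n\in\N}\mathcal{F}_n$. It then suffices to show that each $\mathcal{F}_n$ is finite.

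The key geometric input is that any two distinct closed balls $\overline{B(z_1,r_1)}$ and $\overline{B(z_2,r_2)}$ in $\R^d$ (with $d\ge 2$) satisfy $\mathcal{H}^{d-1}(\partial B_1\cap\partial B_2)=0$. Indeed, if $z_1=z_2$ then $r_1\ne r_2$ and the spheres are disjoint; if $z_1\ne z_2$, the identity $|x-z_1|^2-|x-z_2|^2=r_1^2-r_2^2$ is affine in $x$ and cuts out a hyperplane, so $\partial B_1\cap\partial B_2$ lies in the intersection of $\partial B_1$ with that hyperplane, which is either empty, a point, or a $(d-2)$-sphere --- all of $(d-1)$-Hausdorff measure zero. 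The hypothesis $\nu\ll\mathcal{H}^{d-1}$ then gives $\nu(\partial B_1\cap\partial B_2)=0$ for any two distinct balls.

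Having this, for any $k$ distinct balls $B_1,\ldots,B_k\in\mathcal{F}_n$ the boundary spheres are pairwise $\nu$-essentially disjoint, so
\[
\nu(\Om)\;\ge\;\nu\Bigl(\,\bigcup_{i=1}^{k}\partial B_i\Bigr)\;=\;\sum_{i=1}^{k}\nu(\partial B_i)\;>\;\frac{k}{n},
\]
forcing $k<n\,\nu(\Om)$. Hence $\mathcal{F}_n$ is finite, and taking the countable union over $n$ completes the proof. The ``main obstacle'' is the geometric fact that distinct spheres meet in a set of $(d-1)$-measure zero, but this is a one-line computation; everything else is standard bookkeeping.
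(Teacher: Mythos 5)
Your proof is correct and follows essentially the same route as the paper's: both rest on the observation that two distinct spheres meet in an $\mathcal H^{d-1}$-null (hence $\nu$-null) set, and then a pigeonhole count shows that for each threshold $1/n$ only finitely many spheres can carry that much mass. You merely spell out the two steps the paper leaves implicit (the radical-hyperplane computation and the explicit bound $k<n\,\nu(\Om)$), which is fine.
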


\begin{proof}
For any distinct $y,z\in\Om$ with $\overline{B}(y,r)\subset \Om$ and
$\overline{B}(z,R)\subset \Om$ for some $r,R>0$,
the intersection of the spheres $\partial B(y,r)$ and
$\partial B(z,R)$ has zero $\mathcal H^{d-1}$-measure and thus zero $\nu$-measure.
Thus for every $\alpha>0$, there can be only finitely many balls
$\overline{B}\subset \Om$ such that $\nu(\partial B)>\alpha$.
The result follows.
\end{proof}

Given a ball $B=B(x,r)$ (open, as usual) and $k\in\{1,\ldots,d\}$,
define the ``half-open'' balls
\[
\overline{B}^{k,+}:=B\cup\{y\in \partial B\colon y_k> x_k\},\quad
\overline{B}^{k,-}:=B\cup\{y\in \partial B\colon y_k< x_k\}.
\]

The following definition of ``optimal balls'' is convenient when studying the non-centered
maximal function, and has been used e.g. in \cite{Lui}.
Recall also Lemma \ref{lem:point in closure of ball}.

\begin{definition}
For a function $u$ on $\Om$ and $x\in\Om$, let
\[
\mathcal B_x:=\left\{B(z,r)\subset \Om\colon x\in \overline{B}(z,r),\,r>0,
\textrm{ and }\vint{B(z,r)}|u|\,dy
=M_{\Om}u(x)\right\}.
\]
\end{definition}

Note that if $u\in L^1(\Om)$ and
$x\in \Om$ is a Lebesgue point of $u$ with  $\mathcal B_x=\emptyset$, then we necessarily have
$M_{\Om}u(x)=\widetilde{u}(x)$.

The following is our first version of a formula for $DM_{\Om}u$.
Note that when $M_{\Om}u\in\BV_{\loc}(\Om)$, at almost every $x\in\Om$
we can interpret $D_k M_{\Om}u(x)$, $k\in\{1,\ldots,d\}$,
to be either the density of the measure $D_k M_{\Om}u$ or the classical
partial derivative of $M_{\Om}u$
(for the classical partial derivative to make sense, we need the correct pointwise
representative of $M_{\Om}u$, but the first part of Theorem \ref{thm:continuity on lines}
guarantees that $M_{\Om}u$ itself is suitable).
The same applies to
$D_k\widetilde{u}$.

\begin{theorem}\label{thm:derivative inequalities}
Let $u\in\BV(\Om)$ such that $M_{\Om}u\in \BV_{\loc}(\Om)$.
Then for almost every $x\in\Om$ and
every $k\in\{1,\ldots,d\}$, we have
\begin{align*}
&\mathrm{(1)}\ \frac{D_k|u|({\overline{B}^{k,+})}}{\mathcal L^d(B)}
\le D_k M_{\Om}u(x)\le \frac{D_k|u|(\overline{B}^{k,-})}{\mathcal L^d(B)}\quad
\textrm{if }B\in\mathcal B_x,\, \overline{B}\subset\Om,\\
&\mathrm{(2)}\ D_k M_{\Om}u(x)=D_k|\widetilde{u}|(x)\quad \textrm{if }\mathcal B_x=\emptyset.
\end{align*}
\end{theorem}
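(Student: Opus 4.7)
The plan is to imitate the Sobolev-case argument of \cite[Lemma 2.2]{Lui}, with the pointwise fundamental theorem of calculus replaced by a $B$-integrated version that survives in the BV category. The main identity I want to establish first is
\begin{equation}\label{eq:key-diff}
\int_B \bigl[|u|(y+he_k) - |u|(y)\bigr]\,dy \;=\; \int_0^h D_k|u|(B + se_k)\,ds
\end{equation}
for any $B=B(z,r)$ with $\overline B\subset\Om$ and any $h$ small enough that $B+se_k\subset\Om$ for all $s$ between $0$ and $h$. For the smooth approximants $v_i$ supplied by Proposition \ref{prop:approximation for BV} the identity is Fubini plus the fundamental theorem. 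To transfer it to $|u|$, the left-hand side converges by $L^1$-convergence $v_i\to|u|$, while on the right Lemma \ref{lem:measure of spheres} gives $|D|u||(\partial(B+se_k))=0$ for all but countably many $s$, so the weak-$*$ convergence $Dv_i\overset{*}{\rightharpoondown} D|u|$ yields $D_k v_i(B+se_k)\to D_k|u|(B+se_k)$ for a.e. $s$, and a uniform bound on $|Dv_i|(\Om')$ for a suitably chosen relatively compact $\Om'$ lets dominated convergence conclude.

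For part (1), fix an $x$ at which the classical partial derivative $\partial_k M_\Om u(x)$ exists; by Theorem \ref{thm:continuity on lines}, $M_\Om u$ is continuous BV on a.e. line in direction $e_k$, so the classical derivative exists a.e. on each such line, and Fubini yields a full-measure set of such $x$ in $\Om$. For $B\in\mathcal B_x$ with $\overline B\subset\Om$ and small $h>0$, the shifted ball $B+he_k$ is a legal competitor at $x+he_k$ by Lemma \ref{lem:point in closure of ball}; combining this, the optimality of $B$ at $x$, and \eqref{eq:key-diff} gives
\[
\frac{M_\Om u(x+he_k)-M_\Om u(x)}{h}\;\ge\;\frac{1}{h\,\mathcal L^d(B)}\int_0^h D_k|u|(B+se_k)\,ds.
\]
Lebesgue differentiation as $h\to 0^+$ reduces matters to $\lim_{s\to 0^+} D_k|u|(B+se_k)$, which equals $D_k|u|(\overline B^{k,+})$ because $\mathbbm 1_{B+se_k}\to\mathbbm 1_{\overline B^{k,+}}$ pointwise off the equator $\partial B\cap\{y_k=z_k\}$, and this equator is $(d-2)$-dimensional, hence $\mathcal H^{d-1}$-null and therefore $|D|u||$-null. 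The upper bound with $\overline B^{k,-}$ is the entirely analogous computation with $h<0$, where dividing by a negative reverses the inequality and $\mathbbm 1_{B+se_k}\to\mathbbm 1_{\overline B^{k,-}}$ as $s\to 0^-$.

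For part (2), suppose $\mathcal B_x=\emptyset$ at an $x$ that is simultaneously a Lebesgue point of $|u|$ and a point of classical partial differentiability for both $M_\Om u$ and $|\widetilde u|$. Any optimizing sequence $B(z_n,r_n)$ for $M_\Om u(x)$ cannot admit a subsequence whose radii lie in a compact subinterval of $(0,\infty)$, for then an $L^1$-compactness argument on the indicators $\mathbbm 1_{B(z_n,r_n)}$ would produce a limit ball lying in $\mathcal B_x$. Thus $r_n\to 0$, or, in the case $\Om=\R^d$, $r_n\to\infty$ forcing $M_\Om u(x)=0=|\widetilde u|(x)$; in either case the Lebesgue-point property gives $M_\Om u(x)=|\widetilde u|(x)$. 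Together with $M_\Om u\ge|u|^\vee\ge|\widetilde u|$ almost everywhere (Proposition \ref{prop:maximal function and upper representative}), the nonnegative function $F:=M_\Om u-|\widetilde u|$ vanishes at $x$, so comparing one-sided classical difference quotients along the line $x+he_k$ (where $F\ge 0$ almost everywhere, by Fubini, for a.e. $x$) forces the single-valued classical partial derivative of $F$ at $x$ to be both $\ge 0$ and $\le 0$, yielding $D_k M_\Om u(x)=D_k|\widetilde u|(x)$.

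I expect the most delicate step to be the one-sided limit $s\to 0^+$ of $D_k|u|(B+se_k)$: because $\overline B^{k,+}$ retains a full hemisphere of $\partial B$ on which $D_k|u|$ may carry genuine mass, one cannot naively set $s=0$ in the limit, and the correct identification of the limit as $D_k|u|(\overline B^{k,+})$ relies crucially on $|D|u||\ll\mathcal H^{d-1}$ to annihilate the equatorial $(d-2)$-dimensional slice.
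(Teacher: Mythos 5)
Your proposal is correct and follows essentially the same route as the paper: the integrated fundamental-theorem identity for averages over shifted balls obtained via smooth approximation, Lemma \ref{lem:measure of spheres} and weak* convergence, then one-sided difference quotients using the optimal ball as a competitor at $x\pm he_k$, and for (2) the sandwich argument at points where $M_{\Om}u=|\widetilde u|$. (One harmless over-caution: the equatorial slice needs no appeal to $|D|u||\ll\mathcal H^{d-1}$, since it belongs neither to $\overline B^{k,+}$ nor to any $B+se_k$ with $s>0$, so the indicators already converge pointwise everywhere; the paper handles the same limit via a shrinking symmetric difference.)
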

\begin{proof}
As usual, we can assume that $u\ge 0$.
%By Theorem \ref{thm:BV to Sobo}, we have $M_{\Om}u\in W_{\loc}^{1,1}(\Om)$.
Take $x\in\Om$ such that all $D_k M_{\Om}u(x)$, $k=1,\ldots,d$,
exist (both as densities and as classical derivatives).

First suppose that $B(z,r)\in\mathcal B_x$ and
$\overline{B(z,r)}\subset \Om$.
Fix $k\in\{1,\ldots,d\}$.
Now $B(z+he_k,r)\subset\Om$ for $h\in\R$ close to zero.

Consider momentarily $v\in C^{\infty}(\Om)$. We get for small $h>0$ that
\begin{equation}\label{eq:version for smooth functions}
\begin{split}
\frac{1}{h}\left(\,\vint{B(z+he_k,r)}v\,dy-\vint{B(z,r)}v\,dy\right)
&=\frac{1}{h}\left(\,\vint{B(z,r)}v(y+he_k)-v(y)\,dy\right)\\
&=\frac{1}{h}\left(\,\vint{B(z,r)}\int_0^hD_k v(y+te_k)\,dt\,dy\right)\\
&= \frac{1}{h}\int_0^h\left(\,\vint{B(z+te_k,r)}D_k v\,dy\right)\,dt.
\end{split}
\end{equation}
By Proposition \ref{prop:approximation for BV}
we find a sequence $\{v_i\}_{i\in\N}$ in $C^{\infty}(\Om)$ such that $v_i\to u$
in $L^1(\Om)$, $\lim_{i\to\infty}|D v_i|(\Om)=|Du|(\Om)$,
and $Dv_i\overset{*}{\rightharpoondown} Du$
and $|Dv_i|\overset{*}{\rightharpoondown} |Du|$ in $\Om$.
For every ball $B$ with $\overline{B}\subset \Om$ and
$|Du|(\partial B)=0$, this implies (see \cite[Proposition 1.62(b)]{AFP})
\[
D v_i(B)\to D u(B)\quad\textrm{so in particular}\quad D_k v_i(B)\to D_k u(B).
\]
By Lemma \ref{lem:measure of spheres}, there are at most countably many spheres
$\partial B$
with $\overline{B}\subset \Om$ and $|Du|(\partial B)>0$.
In particular, $|D u|(\partial B(z+te_k,r))=0$ for almost every $t\in [0,h]$.
Writing \eqref{eq:version for smooth functions} with $v=v_i$ and taking the limit
$i\to\infty$, we get by Lebesgue's dominated convergence theorem
\begin{align*}
&\frac{1}{h}\left(\,\vint{B(z+he_k,r)}u\,dy
-\vint{B(z,r)}u\,dy\right)
= \frac{1}{h}\int_0^h\left(\,\frac{D_k u(B(z+te_k,r))}{\mathcal L^d(B(z,r))}\right)\,dt.
\end{align*}
Thus
\begin{align*}
D_k M_{\Om}u(x)
&=\lim_{h\to 0^+}\frac{1}{h}\left(M_{\Om}u(x+he_k)-M_{\Om}u(x)\right)\\
&\ge \lim_{h\to 0^+}\frac{1}{h}\left(\,\vint{B(z+he_k,r)}u\,dy
-\vint{B(z,r)}u\,dy\right)\\
&=\lim_{h\to 0^+}\frac{1}{h}\int_0^h\left(\,\frac{D_k u(B(z+te_k,r))}
{\mathcal L^d(B(z,r))}\right)\,dt.
\end{align*}
Here
\begin{align*}
&\left|\frac{1}{h}\int_0^h\left(\,\frac{D_k u(B(z+te_k,r))}
{\mathcal L^d(B(z,r))}\right)\,dt
-\frac{D_k u(\overline{B}^{k,+}(z,r))}{\mathcal L^d(B(z,r))}\right|\\
&\qquad\le  \frac{1}{h}\int_0^h\left|\,\frac{D_k u(B(z+te_k,r))}
{\mathcal L^d(B(z,r))}
-\frac{D_k u(\overline{B}^{k,+}(z,r))}{\mathcal L^d(B(z,r))}\right|\,dt\\
&\qquad\le  \frac{1}{\mathcal L^d(B(z,r))}\frac{1}{h}\int_0^h
|D u|\left(\overline{B}^{k,+}(z,r)\Delta \bigcup_{s\in (0,h)}B(z+se_k,r)\right)\,dt\\
&\qquad=  \frac{1}{\mathcal L^d(B(z,r))}
|D u|\left(\overline{B}^{k,+}(z,r)\Delta \bigcup_{s\in (0,h)}B(z+se_k,r)\right)\\
&\qquad\to 0\quad\textrm{as }h\to 0
\end{align*}
since $\overline{B}^{k,+}(z,r)\Delta \bigcup_{s\in (0,h)}B(z+se_k,r)\to \emptyset$.
Combining the previous inequalities, we get
\[
D_k M_{\Om}u(x)\ge \frac{D_k u(\overline{B}^{k,+}(z,r))}{\mathcal L^d(B(z,r))}.
\]
Similarly we get
\begin{align*}
D_k M_{\Om}u(x)
&=\lim_{h\to 0^+}\frac{1}{h}\left(M_{\Om}u(x)-M_{\Om}u(x-he_k)\right)\\
&\le \lim_{h\to 0^+}\frac{1}{h}\left(\,\vint{B(z,r)}u\,dy
-\vint{B(z-he_k,r)}u\,dy\right)\\
&=\frac{D_k u(\overline{B}^{k,-}(z,r))}{\mathcal L^d(B(z,r))}.
\end{align*}

Then assume that $\mathcal B_x=\emptyset$. Discarding another $\mathcal L^d$-negligible set,
we can assume that $x$ is a Lebesgue point of $u$, and so necessarily
$M_{\Om}u(x)=\widetilde{u}(x)$.
We can also assume that $D_k\widetilde{u}(x)$, $k=1,\ldots,d$, exist
(again, both as densities and as partial derivatives). Thus
\begin{align*}
D_k M_{\Om}u(x)
&=\lim_{h\to 0^+}\frac{1}{h}\left(M_{\Om}u(x+he_k)-M_{\Om}u(x)\right)\\
&\ge \lim_{h\to 0^+}\frac{1}{h}\left(\widetilde{u}(x+he_k)-\widetilde{u}(x)\right)\\
&=D_k \widetilde{u}(x).
\end{align*}
Similarly,
\begin{align*}
D_k M_{\Om}u(x)
&=\lim_{h\to 0^+}\frac{1}{h}\left(M_{\Om}u(x)-M_{\Om}u(x-he_k)\right)\\
&\le \lim_{h\to 0^+}\frac{1}{h}\left(\widetilde{u}(x+he_k)-\widetilde{u}(x)\right)\\
&=D_k \widetilde{u}(x).
\end{align*}
\end{proof}

From the viewpoint of having a formula for $DM_{\Om}u$,
we would of course like to have equality in Theorem
\ref{thm:derivative inequalities}(1), which in particular happens if $|Du|(\partial B)=0$.
With this in mind, we prove the following lemma.

\begin{lemma}\label{lem:intersections with spheres}
Let $d\ge 2$ and let $u\in\BV_{\loc}(\Om)$.
For $\mathcal L^d$-almost every $x\in\Om$, we have
\[
|Du|(\partial B)=0
\]
for every ball $B$ with $\overline{B}\subset\Om$ and $x\in\partial B$.
\end{lemma}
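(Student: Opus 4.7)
The strategy is to show that the family of balls that can violate the conclusion is actually countable, so the ``bad set'' is a countable union of $(d-1)$-dimensional spheres and hence has $\mathcal L^d$-measure zero.

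First, I would exhaust $\Om$ by an increasing sequence of relatively compact open subsets $\Om_j\subset\Om$ with $\overline{\Om_j}\subset\Om_{j+1}$ and $\bigcup_{j=1}^{\infty}\Om_j=\Om$. For each $j$, the restriction $|Du|\mres\Om_j$ is a finite positive Radon measure. Since $|Du|$ vanishes on $\mathcal H^{d-1}$-negligible sets (a standard property of BV measures, recalled in Section \ref{sec:BV functions}), we have $|Du|\mres\Om_j\ll\mathcal H^{d-1}$, so the hypotheses of Lemma \ref{lem:measure of spheres} are satisfied.

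Second, I would set
\[
\mathcal F_j\coloneqq\{B\colon \overline{B}\subset\Om_j,\,|Du|(\partial B)>0\}
\]
and invoke Lemma \ref{lem:measure of spheres} applied to $|Du|\mres\Om_j$, noting that $\overline{B}\subset\Om_j$ implies $(|Du|\mres\Om_j)(\partial B)=|Du|(\partial B)$. This gives that each $\mathcal F_j$ is at most countable, hence so is $\mathcal F\coloneqq\bigcup_{j=1}^{\infty}\mathcal F_j$. Moreover, any ball $B$ with $\overline{B}\subset\Om$ satisfies $\overline{B}\subset\Om_j$ for sufficiently large $j$ by compactness, so $\mathcal F$ contains every ball $B$ with $\overline{B}\subset\Om$ and $|Du|(\partial B)>0$.

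Finally, the set
\[
E\coloneqq\bigcup_{B\in\mathcal F}\partial B
\]
satisfies $\mathcal L^d(E)=0$ as a countable union of $(d-1)$-dimensional spheres in $\R^d$. For any $x\in\Om\setminus E$ and any ball $B$ with $x\in\partial B$ and $\overline{B}\subset\Om$, we have $B\notin\mathcal F$, so $|Du|(\partial B)=0$, as desired. There is no real obstacle here; the only subtlety is the role of the assumption $d\ge 2$, which enters through Lemma \ref{lem:measure of spheres}: its proof uses that two distinct spheres in $\R^d$ intersect in a set of $\mathcal H^{d-1}$-measure zero, which fails for $d=1$, where spheres are pairs of points and can share common atoms of $|Du|$.
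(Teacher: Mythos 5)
Your proof is correct and follows essentially the same route as the paper: apply Lemma \ref{lem:measure of spheres} to conclude that only countably many spheres $\partial B$ with $\overline{B}\subset\Om$ carry positive $|Du|$-measure, and observe that the union of countably many spheres is $\mathcal L^d$-null when $d\ge 2$. Your exhaustion by relatively compact subsets $\Om_j$ is a welcome extra precaution (the paper applies the lemma directly with $\nu=|Du|$, glossing over the fact that $|Du|$ need only be locally finite for $u\in\BV_{\loc}(\Om)$), but it does not change the argument in any essential way.
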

\begin{proof}
We can apply Lemma \ref{lem:measure of spheres} with the choice
$\nu=|Du|$ to obtain that there are at most countably
many spheres $\partial B$ such that $|Du|(\partial B)>0$.
Thus $|Du|(\partial B)>0$ and $x\in\partial B$
can only be true if $x$ belongs to the countable union of spheres, and such a union
of course has Lebesgue measure zero.
\end{proof}

The following fact is easy to prove, see \cite[Lemma 2.2(2)]{Lui}.
\begin{lemma}\label{lem:optimal ball contains point}
Let $u$ be a function on $\Om$ and let
$x\in \Om$. If there is a ball $B\in\mathcal B_x$ with
$x\in B$, then $DM_{\Om}u(x)=0$ (as a classical derivative).
\end{lemma}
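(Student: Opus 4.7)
My plan is to derive $DM_{\Om}u(x)=0$ from the observation that the hypothesis forces $x$ to be a local minimum of $M_{\Om}u$ along every coordinate direction. First I would fix a ball $B=B(z,r)\in\mathcal B_x$ with $x\in B$ provided by the assumption. Since $B$ is open and $x\in B$, there exists $\delta>0$ with $\overline{B(x,\delta)}\subset B$. For any point $y\in B(x,\delta)$ the ball $B$ still satisfies $B\subset\Om$ and $y\in B\subset\overline{B}$, so $B$ is a valid competitor in the supremum defining $M_{\Om}u(y)$. Consequently
\[
M_{\Om}u(y)\ge \vint_{B}|u|\,dy=M_{\Om}u(x)\quad\text{for every }y\in B(x,\delta),
\]
so $x$ is a local minimum of $M_{\Om}u$.

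The conclusion then follows by a standard one-dimensional calculus argument applied coordinate by coordinate. For each $k\in\{1,\ldots,d\}$ and $|h|<\delta$, the difference quotient $(M_{\Om}u(x+he_k)-M_{\Om}u(x))/h$ is nonnegative when $h>0$ and nonpositive when $h<0$. If the classical partial derivative $D_k M_{\Om}u(x)$ exists, then taking $h\to 0^+$ gives $D_k M_{\Om}u(x)\ge 0$ while $h\to 0^-$ gives $D_k M_{\Om}u(x)\le 0$, so $D_k M_{\Om}u(x)=0$. Doing this for each $k$ yields $DM_{\Om}u(x)=0$ as a classical derivative.

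There is essentially no obstacle here; the only point requiring a moment's thought is checking that the same optimal ball $B$ remains admissible in $M_{\Om}u(y)$ for $y$ near $x$, which is immediate from the openness of $B$ and the fact that the admissibility condition in the definition of $M_{\Om}u$ (recall Lemma~\ref{lem:point in closure of ball}) is $y\in\overline{B}$ with $B\subset\Om$. No approximation or quasicontinuity machinery is needed, which is why this lemma is considerably simpler than Theorem~\ref{thm:derivative inequalities}.
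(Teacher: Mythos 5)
Your argument is correct and is precisely the standard one: since $x$ lies in the \emph{open} optimal ball $B$, the same ball remains admissible for every nearby point $y$, so $M_{\Om}u(y)\ge \vint{B}|u|\,dz = M_{\Om}u(x)$ and $x$ is a local minimum, whence any existing classical partial derivative vanishes. The paper does not write out a proof but defers to \cite{Lui}, Lemma 2.2(2), whose argument is essentially this one, and your conditional phrasing (the derivative is zero \emph{wherever it exists}) matches how the lemma is actually invoked in Theorem \ref{thm:derivative formula}.
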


Now we can prove the following formula for the gradient of the maximal function.

\begin{theorem}\label{thm:derivative formula}
Let $u\in\BV(\Om)$ with $M_{\Om}u\in \BV_{\loc}(\Om)$. Then for almost every $x\in\Om$,
\begin{align*}
&\mathrm{(1)}\ D M_{\Om}u(x)=\frac{D|u|(B)}{\mathcal L^d(B)}\quad
\textrm{if }B\in\mathcal B_x,\ \overline{B}\subset \Om,\
D M_{\Om}u(x)\neq 0,\textrm{ and }d\ge 2,\\
&\mathrm{(2)}\ D M_{\Om}u(x)=D|\widetilde{u}|(x)\quad \textrm{if }\mathcal B_x=\emptyset.
\end{align*}
\end{theorem}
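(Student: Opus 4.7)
The plan is to extract the formula from the one-sided coordinate inequalities of Theorem \ref{thm:derivative inequalities} by showing that, for almost every $x$, the boundary-sphere contributions to $D|u|$ vanish, so that the lower and upper bounds there both collapse to $D_k|u|(B)/\mathcal L^d(B)$. The vector equalities will then follow by combining the $d$ scalar identities.

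To set up, discard the $\mathcal L^d$-null set $N_0\subset\Om$ outside which the conclusions of Theorem \ref{thm:derivative inequalities} hold for every coordinate $k$, together with the $\mathcal L^d$-null set $N_1\subset\Om$ produced by Lemma \ref{lem:intersections with spheres} applied to $|u|\in\BV(\Om)$. The choice of $N_1$ guarantees that for every $x\in\Om\setminus N_1$ and every ball $B$ with $\overline{B}\subset\Om$ and $x\in\partial B$, we have $|D|u||(\partial B)=0$. Fix $x\in\Om\setminus(N_0\cup N_1)$.

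For part (2), the hypothesis $\mathcal B_x=\emptyset$ puts us directly into Theorem \ref{thm:derivative inequalities}(2), which gives $D_k M_{\Om}u(x)=D_k|\widetilde u|(x)$ for each $k$; assembling the coordinates yields $DM_{\Om}u(x)=D|\widetilde u|(x)$.

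For part (1), let $B\in\mathcal B_x$ with $\overline{B}\subset\Om$ and $DM_{\Om}u(x)\neq 0$. By Lemma \ref{lem:optimal ball contains point}, if $x$ were in the interior of $B$ then $DM_{\Om}u(x)$ would vanish, contradicting the hypothesis; combined with $x\in\overline{B}$ this forces $x\in\partial B$. Since $x\notin N_1$, we obtain $|D|u||(\partial B)=0$. Because $\overline{B}^{k,+}\setminus B$ and $\overline{B}^{k,-}\setminus B$ are both contained in $\partial B$, we conclude $D_k|u|(\overline{B}^{k,+})=D_k|u|(\overline{B}^{k,-})=D_k|u|(B)$, so Theorem \ref{thm:derivative inequalities}(1) pinches $D_k M_{\Om}u(x)$ between two copies of $D_k|u|(B)/\mathcal L^d(B)$. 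Assembling coordinates delivers the vector formula.

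The only delicate point is the dichotomy used in part (1): Lemma \ref{lem:optimal ball contains point} is exactly what is needed to exclude the case $x\in B$ under the hypothesis $DM_{\Om}u(x)\neq 0$, and this is precisely why that hypothesis appears in the statement. The restriction $d\ge 2$ enters through Lemma \ref{lem:intersections with spheres}, which genuinely fails in dimension one, where boundary ``spheres'' reduce to pairs of points that $D|u|$ may well charge as jumps.
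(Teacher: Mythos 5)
Your proposal is correct and follows essentially the same route as the paper: Lemma \ref{lem:optimal ball contains point} forces $x\in\partial B$ when $DM_{\Om}u(x)\neq 0$, Lemma \ref{lem:intersections with spheres} (requiring $d\ge 2$) kills the boundary contribution $|D|u||(\partial B)$ for a.e.\ $x$, and Theorem \ref{thm:derivative inequalities} then pinches the two one-sided bounds to the same value, with part (2) immediate. The only cosmetic difference is that you apply Lemma \ref{lem:intersections with spheres} to $|u|$ directly, while the paper applies it to $u$ and uses $|D|u||\le|Du|$; both are valid.
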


In Examples \ref{ex:1d example} and
\ref{ex:derivative counterexample} we will show that the assumptions
$\overline{B}\subset \Om$, $D M_{\Om}u(x)\neq 0$, and $d\ge 2$ are needed.

\begin{proof}
First suppose that $B\in\mathcal B_x$, $\overline{B}\subset \Om$,
$D M_{\Om}u(x)\neq 0$, and $d\ge 2$.
Note that since $DM_{\Om} u(x)\neq 0$, we have
$x\in \partial B$ by Lemma \ref{lem:optimal ball contains point}.
Then by Lemma \ref{lem:intersections with spheres} we can assume that
$|Du|(\partial B)=0$
(recall \eqref{eq:Dju and Dcu}).
Thus by Theorem \ref{thm:derivative inequalities}(1), we get (1).

If $\mathcal B_x=\emptyset$, Theorem \ref{thm:derivative inequalities}(2) gives (2).
\end{proof}

\begin{example}\label{ex:1d example}
On the real line, take $\Om=\R$ and
\[
u(x):=
\begin{cases}
x & \textrm{when }0\le x\le 1,\\
0 & \textrm{otherwise.}
\end{cases}
\]
Thus $u\in\BV(\R)$.
Now obviously for every $x\in [0,1]$ we have $\mathcal B_x=\{(x,1)\}$, so that
\[
M u(x)=\vint{(x,1)}u(t)\,dt=\frac{1}{2}(1+x).
\]
Hence $DMu(x)=1/2$ for all $x\in (0,1)$, but
\[
\frac{Du((x,1))}{\mathcal L^1((x,1))}=\frac{Du([x,1))}{\mathcal L^1((x,1))}=1\quad 
\textrm{and}\quad
\frac{Du((x,1])}{\mathcal L^1((x,1))}=\frac{Du([x,1])}{\mathcal L^1((x,1))}=1-\frac{1}{1-x}.
\]
Thus Theorem \ref{thm:derivative formula}(1) fails, also if $B$ is replaced by any
half-open interval. Hence the assumption $d\ge 2$ is necessary.

A small modification of this example shows that the assumption $\overline{B}\subset\Om$
is also needed. On the plane, let $\Om=(0,1)\times (-2,2)$ and
\[
u(x_1,x_2):=x_1.
\]
Now obviously for every $x\in (0,1)\times (-1,1)$,
we have $\mathcal B_x=\{B_x\}$ with
\[
B_x:=\left\{B\Bigg(\Big(\frac{1+x_1}{2},x_2\Big),\frac{1-x_1}{2}\Bigg)\right\}, 
\]
so that
\[
M_{\Om} u(x)=\vint{B_x}u\,dy=\frac{1}{2}(1+x_1).
\]
Hence $D_1 M_{\Om}u(x)=1/2$ for all $x\in (0,1)\times (-1,1)$, but
\[
\frac{D_1u(B_x)}{\mathcal L^2(B_x)}=1,
\]
showing that the assumption $\overline{B}\subset\Om$ is necessary.
Moreover $|Du|(\partial B_x\setminus \partial \Om)=0$, so again including any part of the boundary
of $B_x$ would not help either.
\end{example}

Finally we give an example showing that the assumption
$D M_{\Om}u(x)\neq 0$ is also needed in Theorem \ref{thm:derivative formula}.

\begin{example}\label{ex:derivative counterexample}
On the plane, let $\Om=\R^2$ and
\[
E_0:=B(0,1)\setminus B(0,1-\delta)
\]
for a small $\delta$; choose $\delta=0.01$.
Then for every $x\in B(0,\delta)$, we claim that $\mathcal B_x^{E_0}=\{B(0,1)\}$;
we use the superscript $E_0$ to specify that we consider the collection of optimal
balls with respect to the function $\mathbbm{1}_{E_0}$.
To see this, fix $x\in B(0,\delta)$ and $B(z,r)\in \mathcal B_x^{E_0}$ (such a disk is easily seen
to exist). Note first that
necessarily $0.49\le r\le 1$.
Now we simply check three cases.

First suppose $0.49\le r\le 0.63$.
Now $B(z,r)$ intersects less than $22/100$ of $\partial B(0,1-\delta)$, and so
\begin{align*}
m(B(z,r))
&:=\frac{\mathcal L^2(E_0\cap B(z,r))}{\mathcal L^2(B(z,r))}
\le \frac{\mathcal L^2(E_0\cap B(z,0.63))}{\mathcal L^2(B(z,0.49))}\\
&<\frac{\tfrac{22}{100} \times 2\pi\delta}{0.49^2 \mathcal L^2(B(0,1))}
< \frac{0.99\times 2\pi\delta}{\mathcal L^2(B(0,1))}
<\frac{\mathcal L^2(E_0)}{\mathcal L^2(B(0,1))}=m(B(0,1)).
\end{align*}
Then suppose $0.63\le r\le 0.8$.
Now $B(z,r)$ intersects
less than $1/3$ of $\partial B(0,1-\delta)$, and so
\begin{align*}
m(B(z,r))
<\frac{\tfrac 13 \times 2\pi\delta}{0.63^2 \mathcal L^2(B(0,1))}
<m(B(0,1)).
\end{align*}
Finally suppose $0.8\le r\le 0.95$.
Now $B(z,r)$ intersects
less than $1/2$ of $\partial B(0,1-\delta)$, and so
\begin{align*}
m(B(z,r))
<\frac{\tfrac 12 \times 2\pi\delta}{0.8^2 \mathcal L^2(B(0,1))}
<m(B(0,1)).
\end{align*}

Thus necessarily $0.95\le r\le 1$.
But now $B(z,r)\supset \partial B(0,1-\delta)$,
because otherwise $B(z,r)$ covers less than $3/4$ of $\partial B(0,1-\delta/2)$,
implying that
\[
m(B(z,r))<\frac{\tfrac 78 \times 2\pi\delta}{0.95^2 \mathcal L^2(B(0,1))}<m(B(0,1)).
\]
Thus $B(z,r)$ contains $B(0,1-\delta)$, and now clearly the maximum value of $m(B(z,r))$
is obtained by choosing $B(z,r)=B(0,1)$.
Thus $\mathcal B_x^{E_0}=\{B(0,1)\}$ as desired.

Denote $c:=(0,1)\in\partial B(0,1)$.
Next we ``perturb'' $E_0$ slightly by removing and adding a small ball:
\[
E:=(E_0\setminus B(c,\delta^2))\cup (B(c,\delta^2)\setminus B(0,1)).
\]
Since the perturbation is so small, almost the same calculations as above show that
for every $x\in B(0,\delta)$, we have $\mathcal B_x^{E}=\{B(0,1)\}$.
Thus $DM\mathbbm{1}_{E}=0$ in $B(0,\delta)$, but
\[
\frac{D_2 \mathbbm{1}_{E}(B(0,1))}{\mathcal L^2(B(0,1))}<0,
\quad \frac{D_2 \mathbbm{1}_{E}(\overline{B}(0,1))}{\mathcal L^2(B(0,1))}>0,
\]
\[
\frac{D_2 \mathbbm{1}_{E}(\overline{B}^{2,+}(0,1))}{\mathcal L^2(B(0,1))}<0,
\quad \frac{D_2 \mathbbm{1}_{E}(\overline{B}^{2,-}(0,1))}{\mathcal L^2(B(0,1))}>0.
\]
Thus none of these equal $DM\mathbbm{1}_{E}$ in $B(0,\delta)$, which is of course a set
of nonzero Lebesgue measure.
\end{example}

\section{The one-dimensional case}\label{sec:the 1d case}

In this section we investigate the properties of the non-centered maximal function
in the special case $d=1$.
Let $\Om\subset \R$ be an arbitrary nonempty open set.

Aldaz and P\'erez L\'azaro \cite{APL07} proved in one dimension that the non-centered
maximal function of a function $u\in\BV(\Om)$ is in the Sobolev class
$W^{1,1}_{\loc}(\Om)$, with $\Vert D M_{\Om}u\Vert_{L^1(\Om)}\le |Du|(\Om)$.
Now we investigate the behavior of $M_{\Om}u$ a little further.

Recall that
\[
H_u=\{x\in \Om\colon M_{\Om}u(x)>|u|^{\vee}(x)\}.
\]
Recall also from the proof of Theorem \ref{thm:continuity on lines} and the Open Problem on
Page \pageref{openproblem about contact set} that
it is in some sense desirable that the set $H_u$ be as large as possible.
On the real line, we are able to show the following.
Recall that $S_u$ is the set of non-Lebesgue points, or (as we are in one dimension)
the set of discontinuity points of $u^{\vee}$ (alternatively some other good pointwise representative).
Moreover, denote by $\partial^*E$ the measure-theoretic boundary of a set $E\subset \R$,
i.e. the set of points $x\in\R$ for which
\[
\limsup_{r\to 0}\frac{\mathcal L^1(B(x,r)\cap E)}{\mathcal L^1(B(x,r))}>0
\quad\textrm{and}\quad \limsup_{r\to 0}\frac{\mathcal L^1(B(x,r)\setminus E)}{\mathcal L^1(B(x,r))}>0.
\]

\begin{proposition}\label{prop:Ddu outside Du is zero}
Let $u\in\BV_{\loc}(\Om)$ with $|Du|(\Om)<\infty$. Then $|D u|(\Om\setminus (H_u\cup S_u))=0$.
\end{proposition}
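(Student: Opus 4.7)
\emph{Plan.} The strategy is to combine the one-dimensional Sobolev regularity of $M_\Om u$ (recalled at the start of this section from Aldaz--P\'erez L\'azaro) with the coarea formula for $\BV$ functions. Set $K:=\Om\setminus(H_u\cup S_u)$. Since $M_\Om u\in W^{1,1}_{\loc}(\Om)$, it is continuous, and the nonnegative function
\[
w:=M_\Om u-|u|^{\vee}
\]
belongs to $\BV_{\loc}(\Om)$, is continuous on $\Om\setminus S_u$, and satisfies $K=\{w=0\}\cap(\Om\setminus S_u)$; its nonnegativity comes from Proposition~\ref{prop:maximal function and upper representative} applied to $|u|$, using that $S_u=J_u$ in one dimension.

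First I would use the coarea formula to show $|Dw|(K)=0$. For every $t>0$, continuity of $w$ on $\Om\setminus S_u$ makes $\{w>t\}$ relatively open there with topological boundary contained in $\{w=t\}$, which is disjoint from $K\subset\{w=0\}$; hence $|D\mathbbm{1}_{\{w>t\}}|(K)=0$ for every $t>0$, and coarea yields $|Dw|(K)=0$. Next I would show $|DM_\Om u|(K)=0$. Since $|DM_\Om u|$ is absolutely continuous, it suffices to verify that $(M_\Om u)'(x)=0$ at $\mathcal L^1$-a.e.\ $x\in K$. A short Taylor-expansion argument, invoking Lemma~\ref{lem:point in closure of ball} and splitting by the sign of $\widetilde u(x)$, shows that for any $x\in K$ at which $u$ is classically differentiable, $u'(x)\neq 0$ would force some one-sided average of $|u|$ on a short interval abutting $x$ to strictly exceed $|\widetilde u(x)|$, contradicting $M_\Om u(x)=|\widetilde u(x)|$. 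Hence $u'(x)=0$, and consequently $(|u|)'(x)=0$, at a.e.\ $x\in K$. At a Lebesgue density-$1$ point $x\in K$, comparing difference quotients along a sequence $x_n\in K\to x$ (on which $M_\Om u$ agrees with $|u|^\vee=|\widetilde u|$) then gives $(M_\Om u)'(x)=(|u|)'(x)=0$.

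Combining via $D|u|=DM_\Om u-Dw$ yields $|D|u||(K)\le|DM_\Om u|(K)+|Dw|(K)=0$. To pass from $|D|u||(K)$ to $|Du|(K)$, I split $K=(K\cap\{\widetilde u\neq 0\})\cup(K\cap\{\widetilde u=0\})$: on the first piece $u$ has locally constant sign, so $|Du|$ and $|D|u||$ coincide as measures and this piece contributes zero; on the second piece I apply the coarea formula directly to $u$, noting that for a.e.\ $t\neq 0$ the boundary $\partial^*\{u>t\}$ meets $\Om\setminus S_u$ only on $\{\widetilde u=t\}$ and thus avoids $\{\widetilde u=0\}\cap(\Om\setminus S_u)$, giving $|Du|(K\cap\{\widetilde u=0\})=0$. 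The main obstacle is the Cantor part of $|Du|$, which is inaccessible via classical differentiation; the coarea formula applied to the nonnegative function $w$ is precisely the tool that neutralizes this difficulty by reducing the problem to the elementary fact that a superlevel-set boundary at a positive level cannot meet the zero set of $w$.
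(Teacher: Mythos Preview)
Your proof is correct, but it takes a markedly more circuitous route than the paper's. The paper applies the coarea formula to $u$ directly: for a.e.\ level $t$, any point $x\in\partial^*\{u>t\}\cap(\Om\setminus S_u)$ must have $\widetilde u(x)=t$, and since $u>t$ a.e.\ on one side of $x$ the one-sided average of $|u|$ there strictly exceeds $|t|=|u|^{\vee}(x)$, forcing $x\in H_u$. Hence $\partial^*\{u>t\}\cap K=\emptyset$ for a.e.\ $t$, and coarea gives $|Du|(K)=0$ in one stroke.

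Your argument instead splits $|u|=M_\Om u-w$, uses coarea on the nonnegative function $w$ to kill $|Dw|(K)$, uses the Aldaz--P\'erez L\'azaro absolute continuity of $M_\Om u$ together with a pointwise differentiation argument to kill $|DM_\Om u|(K)$, deduces $|D|u||(K)=0$, and finally transfers to $|Du|(K)=0$ by a sign argument on $\{\widetilde u\neq 0\}$ and coarea on $u$ at level $0$. Two remarks: (i) your Taylor-expansion step (showing $u'(x)\neq 0\Rightarrow x\in H_u$) is exactly the infinitesimal form of the paper's key observation, and (ii) your last step---``for a.e.\ $t\neq 0$, $\partial^*\{u>t\}\cap(\Om\setminus S_u)\subset\{\widetilde u=t\}$''---is literally the paper's observation specialised to conclude only on $\{\widetilde u=0\}$; run for all levels it \emph{is} the paper's proof and makes your Steps~1--4 superfluous. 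What your approach buys is an explicit decomposition that isolates where the absolute continuity of $M_\Om u$ is doing work; what the paper's approach buys is brevity and a proof that does not even invoke the $W^{1,1}$ regularity of $M_\Om u$.
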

\begin{proof}
Abbreviate super-level sets by
\[
\{u>t\}:=\{x\in\Om\colon u(x)>t\},\quad t\in\R.
\]
We have the coarea formula (see e.g. \cite[Theorem 3.40]{AFP})
\begin{equation}\label{eq:coarea formula}
|Du|(A)=\int_{-\infty}^{\infty}|D\mathbbm{1}_{\{u>t\}}|(A)\,dt
\end{equation}
for every Borel set $A\subset\Om$.
In particular, for almost every $t\in\R$, the super-level set $\{u>t\}$
has finite perimeter in $\Om$.
Let $N\subset\R$ be the exceptional set.
For later purposes, we also include in $N$ the at most countably many $t\in\R$ for which
$\mathcal L^1(\{u=t\})>0$.

Consider $t\in\R\setminus N$.
First assume that $t\ge 0$.
The fact that $\{u>t\}$ has finite perimeter in $\Om$ means that
after redefinition in a set of measure zero, giving a set that we can denote $E_t$,
the relative boundary $\partial E_t\cap\Om\subset \Om$
consists of finitely many points; see \cite[Proposition 3.52]{AFP}.
It follows that
if $x\in\partial E_t\cap\Om$, then for small enough
$r>0$, necessarily $\mathcal L^1((x-r,x)\setminus\{u\le t\})=0$ and
$\mathcal L^1((x,x+r)\setminus \{u>t\})=0$, or vice versa.
Supposing without loss of generality the former, we have
\[
M_{\Om}u(x)\ge \vint{(x,x+r)}u\,dy>t.
\]
If also $x\notin S_u$, then $t=u^{\vee}(x)=|u|^{\vee}(x)$, and we get $x\in H_u$.
In conclusion, if
$x\in \partial^*\{u>t\}\cap\Om\setminus S_u\subset \partial E_t\cap\Om\setminus S_u$
for some $t\in\R\setminus N$,
then $x\in H_u$.

For $t<0$, we similarly get that if $x\in \partial E_t\cap\Om$ for $t\in\R\setminus N$, then
necessarily $\mathcal L^1((x-r,x)\setminus\{u< t\})=0$ and
$\mathcal L^1((x,x+r)\setminus \{u>t\})=0$, or vice versa
(recall that $\mathcal L^1(\{u=t\})=0$). Supposing the former,
\[
M_{\Om}u(x)\ge \vint{(x-r,x)}|u|\,dy>|t|.
\]
If $x\notin S_u$, then $|t|=|u|^{\vee}(x)$ and we get $x\in H_u$, as before.

Consider a point $x$ in the Borel set $A:=\Om\setminus (H_u\cup S_u)$.
Now $x\notin\partial^*\{u>t\}$ for all $t\in\R\setminus N$.
For every $t\in\R\setminus N$ we also have
\begin{equation}\label{eq:perimeter and Hausdorff}
|D\mathbbm{1}_{\{u>t\}}|(A)=\mathcal H^0(\partial^*\{u>t\}\cap A),
\end{equation}
see e.g. \cite[Theorems 3.59 \& 3.61]{AFP}.
Thus by \eqref{eq:coarea formula} we get
\[
|Du|(A)=\int_{-\infty}^{\infty}\mathcal H^0(\partial^*\{u>t\}\cap A)\,dt
=\int_{N}\mathcal H^0(\partial^*\{u>t\}\cap A)\,dt=0.
\]
\end{proof}

Since $M_{\Om}u$ is continuous (even absolutely) and $u^{\vee}$ is upper semicontinuous,
$H_u$ is an open set, so it is the union of disjoint open
intervals 
\[
H_u=\bigcup_{j=1}^{\infty}(a_j,b_j)\subset \Om.
\]
Note that for two of these intervals, $a_j$ or $b_j$ may be $\pm \infty$.

\begin{theorem}
Let $u\in\BV_{\loc}(\Om)$ with $|Du|(\Om)<\infty$.
For each $j\in\N$ there exists a point $c_j\in (a_j,b_j)$ such that
we have the representation
\[
|DM_{\Om}u|(\Om)=
\sum_{j=1}^{\infty}
\big[|M_{\Om}u(a_j)-M_{\Om}u(c_j)|+|M_{\Om}u(c_j)-M_{\Om}u(b_j)|\big].
\]
\end{theorem}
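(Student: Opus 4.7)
The plan is to decompose $|DM_\Om u|(\Om)$ over the components of $H_u$ and, on each component $(a_j,b_j)$, identify a single ``turning point'' $c_j$ giving the two-term representation. Since $M_\Om u = M_\Om |u|$ and $H_u$ is defined via $|u|^\vee$, assume $u\ge 0$ and write $v:=M_\Om u$.

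By the Aldaz--P\'erez L\'azaro result recalled at the start of this section, $v\in W^{1,1}_{\loc}(\Om)$, so $|Dv|(\Om)=\int_\Om |v'(x)|\,dx$. The first step is to show that $v'=0$ a.e.\ on $\Om\setminus H_u$. Proposition \ref{prop:Ddu outside Du is zero} gives $|Du|(\Om\setminus(H_u\cup S_u))=0$, hence the classical derivative of $u^\vee$ (which agrees a.e.\ with $d D^a u/d\mathcal L^1$) vanishes a.e.\ outside $H_u\cup S_u$, and therefore a.e.\ on $\Om\setminus H_u$ since $S_u$ is countable. Because $H_u$ is open, almost every $x\in\Om\setminus H_u$ is a Lebesgue density-one point of $\Om\setminus H_u$; comparing difference quotients of $v$ and $u^\vee$ along a sequence $h_n\to 0$ with $x+h_n\in\Om\setminus H_u$, and using $v=u^\vee$ there together with the joint differentiability of $v$ (AC) and $u^\vee$ (BV) at almost every point, yields $v'(x)=(u^\vee)'(x)=0$. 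Consequently
\[
|DM_\Om u|(\Om) \;=\; \sum_{j=1}^{\infty}\int_{a_j}^{b_j}|v'(x)|\,dx.
\]

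The theorem then reduces to the structural claim that for each $j$ there exists $c_j\in(a_j,b_j)$ with $v$ monotonic on both $[a_j,c_j]$ and $[c_j,b_j]$. Granting this, absolute continuity of $v$ yields $\int_{a_j}^{b_j}|v'|\,dx=|v(a_j)-v(c_j)|+|v(c_j)-v(b_j)|$, where $v(a_j),v(b_j)$ are interpreted as limits when the endpoints lie in $\{\pm\infty\}$ or on $\partial\Om$ (these limits exist because $v$ is absolutely continuous with $|Dv|(\Om)<\infty$); summing over $j$ gives the claimed formula.

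The main obstacle, and the technical heart of the proof, is this unimodality statement --- namely that $v$ restricted to $(a_j,b_j)$ has at most one monotonicity change. The example $u=\mathbbm 1_{[0,1]}+\mathbbm 1_{[2,3]}$ with $(a_j,b_j)=(1,2)$ and $c_j=3/2$ (a local \emph{minimum} of $v$) shows that $c_j$ may be either a local maximum or a local minimum. My approach is to take $c_j$ to be a point at which $v$ attains its extremum on $[a_j,b_j]$ and argue by contradiction: a monotonicity reversal on, say, $[a_j,c_j]$ produces an interior local extremum $x_0\in(a_j,c_j)$ of type opposite to $c_j$. Using Lemma \ref{lem:point in closure of ball} and the existence of optimal intervals in $\mathcal B_{x_0}$ and $\mathcal B_{c_j}$ (both nonempty because $v>u^\vee$ on $H_u$), I would compare the averages on these two intervals and on intervals obtained by joining or translating them, to exhibit a candidate interval for either $x_0$ or $c_j$ whose average strictly exceeds the corresponding value of $v$ --- contradicting optimality. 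This variation-diminishing analysis is in the spirit of the one-dimensional arguments of Aldaz and P\'erez L\'azaro in \cite{APL07, APL09}.
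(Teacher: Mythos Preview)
Your reduction to (i) showing $v'=0$ a.e.\ on $\Om\setminus H_u$ and (ii) establishing unimodality on each $(a_j,b_j)$ matches the paper's structure. For step~(i) your density-point argument is sound and differs mildly from the paper's route: the paper applies the coarea formula to $M_\Om u - |u|$ on $\Om\setminus(S_u\cup H_u)$ (noting that this difference can lie in $\partial^*\{M_\Om u-|u|>t\}$ only for $t=0$), obtaining $|D(M_\Om u-|u|)|(\Om\setminus(S_u\cup H_u))=0$, and then combines with Proposition~\ref{prop:Ddu outside Du is zero}. Your pointwise approach via Lebesgue density and a.e.\ differentiability is a legitimate alternative.

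The genuine gap is in step~(ii). Your plan to compare optimal intervals at an interior extremum $x_0$ and at $c_j$, and to ``join or translate'' them to beat one of the averages, is not a proof; it is not clear what inequality you would derive, and such manipulations can easily fail for the non-centered maximal function. The paper's argument is short and identifies the precise tool you are circling around: suppose $a_j<d_1<d_2<d_3<b_j$ with $v(d_1)<v(d_2)$ and $v(d_3)<v(d_2)$, and take $d_2$ to realize $\max_{[d_1,d_3]}v$. Then \cite[Lemma~3.6]{APL07} gives $M_\Om u(d_2)=|u|^{\vee}(d_2)$, contradicting $d_2\in H_u$. Hence $v$ has \emph{no interior local maximum} in $(a_j,b_j)$, so it is either monotone or decreasing-then-increasing (your example with $c_j=3/2$ is exactly this valley shape). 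You should invoke \cite[Lemma~3.6]{APL07} directly rather than attempt an ad hoc comparison of averages; once that lemma is cited, the unimodality is immediate and the formula follows as you wrote.
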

If $a_j$ (or $b_j$) is $\pm \infty$, we interpret 
$M_{\Om}u(a_j)$ as a limit; it exists since
$M_{\Om}u\in W^{1,1}_{\loc}(\Om)$ with $\Vert D M_{\Om}u\Vert_{L^1(\Om)}\le |Du|(\Om)$
by \cite[Theorem 2.5]{APL07}.
\begin{proof}
The set $S_u$ is at most countable and $M_{\Om}u\in W_{\loc}^{1,1}(\Om)$, so we have
\begin{equation}\label{eq:variation in Su}
|DM_{\Om}u|(S_u)=0.
\end{equation}
Consider a point $x\in \Om\setminus (S_u\cup H_u)$.
Now $M_{\Om}u(x)=|u|^{\vee}(x)$. Since both functions
$M_{\Om}u$ and $|u|^{\vee}$ are continuous at
$x$, this point can be in $\partial^*\{M_{\Om}u-|u|>t\}$ only for $t=0$.
By the coarea formula \eqref{eq:coarea formula} and \eqref{eq:perimeter and Hausdorff}, we have
\begin{align*}
|D(M_{\Om}u-|u|)|(\Om\setminus (S_u\cup H_u))
&=\int_{-\infty}^{\infty}\mathcal H^0(\Om\cap \partial^*\{M_{\Om}u-|u|>t\}
\setminus (S_u\cup H_u))\,dt\\
&=0.
\end{align*}
By Proposition \ref{prop:Ddu outside Du is zero}, it follows that
\[
|DM_{\Om}u|(\Om\setminus (S_u\cup H_u))
=|D|u||(\Om\setminus (S_u\cup H_u))
\le|Du|(\Om\setminus (S_u\cup H_u))
=0.
\]
Thus by \eqref{eq:variation in Su},
\[
|DM_{\Om}u|(\Om\setminus H_u)=0.
\]
Thus all of the total variation of $M_{\Om}u$ is in the set
$H_u=\bigcup_{j=1}^{\infty}(a_j,b_j)$.

Now we follow an argument given in \cite{APL07}.
Suppose that for some $j\in\N$,
there exist points $d_1,d_2,d_3$ with $a_j<d_1<d_2<d_3<b_j$
and $M_{\Om}u(d_1)<M_{\Om}u(d_2)$ and $M_{\Om}u(d_3)<M_{\Om}u(d_2)$.
We can assume that $M_{\Om}u(d_2)=\max\{M_{\Om}u(x)\colon x\in[d_1,d_3]\}$.
Then by \cite[Lemma 3.6]{APL07} we have $M_{\Om}u(d_2)=|u|^{\vee}(d_2)$,
a contradiction with $d_2\in H_u$.

It follows that for every $j\in\N$, either $M_{\Om}u$
is monotone on $(a_j,b_j)$ or there exists $c_j\in (a_j,b_j)$
such that $M_{\Om}u$ is decreasing on $[a_j,c_j]$ and increasing on $[c_j,b_j]$.
In the former case, we can just choose an arbitrary $c_j\in (a_j,b_j)$.
Now
\[
|DM_{\Om}u|(\Om)=
|DM_{\Om}u|(H_u)=\sum_{j=1}^{\infty}
\big[|M_{\Om}u(a_j)-M_{\Om}u(c_j)|+|M_{\Om}u(c_j)-M_{\Om}u(b_j)|\big].
\]
\end{proof}

\noindent Address:\\

\noindent Academy of Mathematics and Systems Science,\\
Chinese Academy of Sciences,\\
Beijing 100190, PR China\\
E-mail: {\tt panu.lahti@aalto.fi}


\begin{thebibliography}{ACMM}

\bibitem{APL07}J. M. Aldaz and F. J. P\'erez L\'azaro,
\textit{Functions of bounded variation, the derivative of the one dimensional maximal function, and applications to inequalities},
Trans. Amer. Math. Soc. 359 (2007), no. 5, 2443--2461.

\bibitem{APL09}J. M. Aldaz and F. J. P\'erez L\'azaro,
\textit{Regularity of the Hardy-Littlewood maximal operator on block decreasing functions},
Studia Math. 194 (2009), no. 3, 253--277.

\bibitem{AFP}L. Ambrosio, N. Fusco, and D. Pallara,
\textit{Functions of bounded variation and free discontinuity problems.}
Oxford Mathematical Monographs. The Clarendon Press, Oxford University Press, New York, 2000.

\bibitem{CDLP}M. Carriero, G. Dal Maso, A. Leaci, and E. Pascali,
\textit{Relaxation of the nonparametric plateau problem with an obstacle},
J. Math. Pures Appl. (9) 67 (1988), no. 4, 359--396.

\bibitem{HaKi}H. Hakkarainen and J. Kinnunen,
\textit{The BV-capacity in metric spaces},
Manuscripta Math. 132 (2010), no. 1-2, 51--73.

\bibitem{Ki}J. Kinnunen,
\textit{The Hardy-Littlewood maximal function of a Sobolev function},
Israel J. Math. 100 (1997), 117--124.

\bibitem{KKST}J. Kinnunen, R. Korte, N. Shanmugalingam, and H. Tuominen,
\textit{Lebesgue points and capacities via the boxing inequality in metric spaces},
Indiana Univ. Math. J. 57 (2008), no. 1, 401--430.

\bibitem{L-SA}P. Lahti,
\textit{Strong approximation of sets of finite perimeter in metric spaces},
Manuscripta Math. 155 (2018), no. 3-4, 503--522.

\bibitem{LaSh}P. Lahti and N. Shanmugalingam,
\textit{Fine properties and a notion of quasicontinuity for $\BV$ functions on metric spaces},
 J. Math. Pures Appl. (9) 107 (2017), no. 2, 150--182.

\bibitem{Lui}H. Luiro,
\textit{The variation of the maximal function of a radial function},
Ark. Mat. 56 (2018), no. 1, 147--161.

\bibitem{Tan}H. Tanaka,
\textit{A remark on the derivative of the one-dimensional Hardy-Littlewood maximal function},
Bull. Austral. Math. Soc. 65 (2002), no. 2, 253--258.

\bibitem{Wei}J. Weigt,
\textit{Variation of the uncentered maximal characteristic function},
preprint 2020.\\
https://arxiv.org/abs/2004.10485

\end{thebibliography}
\end{document}